\DeclareMathOperator{\dom}{dom}
\DeclareMathOperator{\cls}{\overline{span}}
\DeclareMathOperator{\ran}{ran}
\DeclareMathOperator{\cran}{\overline{ran}}
\DeclareMathOperator{\spt}{spt}
\DeclareMathOperator{\Real}{Re}
\DeclareMathOperator{\diver}{div}
\DeclareMathOperator{\diverz}{div_0}
\DeclareMathOperator{\grad}{grad}
\DeclareMathOperator{\gradz}{grad_0}
\DeclareMathOperator{\curl}{curl}
\DeclareMathOperator{\curlz}{curl_0}
\DeclareMathOperator{\abscb}{s_b}
\newcommand{\euler}{\mathrm{e}}
\newcommand{\iu}{\mathrm{i}}
\newcommand{\multm}{\mathrm{m}}
\newcommand{\sH}{\mathrm{H}}
\newcommand{\R}{\mathbb{R}}
\newcommand{\N}{\mathbb{N}}
\newcommand{\C}{\mathbb{C}}
\newcommand{\1}{\mathbf{1}}
\newcommand{\FF}{\mathcal{F}}
\newcommand{\LL}{\mathcal{L}}
\newcommand{\TT}{\mathcal{T}}
\newcommand{\HH}{\mathcal{H}}
\newcommand{\Lp}[2][]{\mathrm{L}^{#2}\ifthenelse{\equal{#1}{}}{}{_{#1}}} 
\newcommand{\Lb}{\mathrm{L}_{\mathrm{b}}} 
\newcommand{\Cc}[1][\infty]{\mathrm{C}_{\mathrm{c}}^{#1}}
\newcommand{\integral}[4]{\int_{#1}^{#2} #3 \operatorname{d}\! #4}
\newcommand{\abs}[1]{\lvert#1\rvert}
\newcommand{\bigabs}[1]{\Big\lvert#1\Big\rvert}
\newcommand{\norm}[2][]{\lVert#2\rVert\ifthenelse{\equal{#1}{}}{}{_{#1}}}
\newcommand{\bignorm}[2][]{\Big\lVert#2\Big\rVert\ifthenelse{\equal{#1}{}}{}{_{#1}}}
\newcommand{\iprod}[3][]{\langle#2,#3\rangle\ifthenelse{\equal{#1}{}}{}{_{#1}}}
\newcommand{\bigiprod}[3][]{\Big\langle#2,#3\Big\rangle\ifthenelse{\equal{#1}{}}{}{_{#1}}}
\newcommand{\dprod}[3][]{[#2,#3]\ifthenelse{\equal{#1}{}}{}{_{#1}}}
\newcommand{\from}{\colon}
\numberwithin{equation}{section}
\declaretheorem[name=Definition,style=definition,numberwithin=section,qed=$\ast\ast$]{definition}
\declaretheorem[name=Remark,style=remark,numberlike=definition,qed=$\sslash$]{remark}
\declaretheorem[name=Example,style=remark,numberlike=definition,qed=$\sslash$]{example}
\declaretheorem[name=Lemma,numberlike=definition]{lemma}
\declaretheorem[name=Corollary,numberlike=definition]{corollary}
\declaretheorem[name=Theorem,numberlike=definition]{theorem}
\declaretheorem[name=Picard's Well-Posedness Theorem,numberlike=definition]{thmPicard}
\declaretheorem[name=Open Problem,numberlike=definition]{openproblem}
\title[Duality for Ev. Eqns.]{Duality for Evolutionary Equations with Applications to Null Controllability}
\date{\today}
\author[A.~Buchinger]{Andreas Buchinger\,\orcidlink{0009-0004-4203-5874}}
\address[A.B.]{TU Bergakademie Freiberg \\
  Institute of Applied Analysis \\
  Akademiestr.\ 6 \\
  D-09596 Freiberg \\
  Germany}
\curraddr{Technische Universit\"at Hamburg \\
  Institut f\"ur Mathematik \\
  Am Schwarzenberg-Campus 3 \\
  D-21073 Hamburg \\
  Germany}
\email{andreas.buchinger@tuhh.de}
\author[C.~Seifert]{Christian Seifert\,\orcidlink{0000-0001-9182-8687}}
\address[C.S.]{Technische Universit\"at Hamburg \\
  Institut f\"ur Mathematik \\
  Am Schwarzenberg-Campus 3 \\
  D-21073 Hamburg \\
  Germany}
\email{christian.seifert@tuhh.de}
\begin{document}

\begin{abstract}
    We study evolutionary equations in exponentially weighted $\Lp{2}$-spaces as introduced by Picard in 2009. First, for a given evolutionary equation, we explicitly describe the $\nu$-adjoint system, which turns out to describe a system backwards in time. We prove well-posedness for the $\nu$-adjoint system. We then apply the obtained duality to introduce and study notions of null-controllability for evolutionary equations.
    
    \smallskip
\noindent \textbf{Keywords.} evolutionary equations, duality, $\nu$-adjoint system, null-controllability

\smallskip
\noindent \textbf{MSC2020.} 35Axx, 35F35, 35M10, 47F05, 47N20

\medskip
\noindent \textbf{Declaratory statements.} The authors have no conflict of interest to declare that
is relevant to the contents of this article. Data sharing is not applicable to this
article as no datasets were generated or analysed in this study.
\end{abstract}
\maketitle
\section{Introduction}
Evolutionary equations provide a model class for studying various partial differential equations in a unified way. Since their introduction by Rainer Picard in 2009 \cite{Picard2009} (see also \cite{PiMc11})
 quite a few particular situations have been considered in this framework. Let us provide a (by no means exhaustive) list of examples.
 Many classical equations such as, e.g., the heat equation, the wave equation, Maxwell's equations (see \Cref{exEvoEq}), and poro- (e.g., \cite{McPic10}) as well as
 fractional (e.g., \cite{PicTroWau15}) elasticity can be modeled as evolutionary equations. A more intricate example are certain piezoelectric coupling models. In \cite{Picard17},
 they were treated with boundary dynamics using abstract boundary data spaces
 that do not require any regularity assumptions on the boundary (see \cite{PicTrosWau2016,PicSeiTroWau16}). Moreover, in \cite{MPTW16} thermal coupling without boundary dynamics was included, and recently in \cite{BuDo24}, 
 thermo-piezoelectric coupling with boundary dynamics was modeled in this framework.
 Another example are (infinite-dimensional) differential-algebraic equations that were also treated in the evolutionary context (e.g., \cite{TroWau2019}).
 A general overview on evolutionary equations can be found in the monographs \cite{PicardMcGheeTrostorffWaurick2020, SeTrWa22}.

In order to explain evolutionary equations in a nutshell, let $\HH$ be a Hilbert space of functions of a spatial variable; we may think of $\HH\coloneqq \Lp{2}(\Omega)$ for some subset $\Omega\subseteq \R^d$. Given a (spatial) differential operator $A$ and a material law function $M(\cdot)$, i.e., a holomorphic mapping from a subset of $\C$ to the bounded operators on $\HH$, describing the constituent relations, we consider exponentially weighted Bochner--Lebesgue spaces $\Lp[\nu]{2}(\R,\HH)$, where $\nu$ is the weight parameter. Elements of $\Lp[\nu]{2}(\R,\HH)$ are interpreted as functions of time $\R$ mapping to the spatial function space $\HH$. Introducing the temporal derivative as an invertible operator $\partial_{t,\nu}$ in $\Lp[\nu]{2}(\R,\HH)$, the prototype of an evolutionary equation is of the form
\[(\partial_{t,\nu} M(\partial_{t,\nu}) + A)u = f,\]
where $M(\partial_{t,\nu})$ is given by functional calculus via the Fourier--Laplace transformation, $A$ is lifted to the Bochner space as a multiplication operator, and $f\in \Lp[\nu]{2}(\R,\HH)$.
Thus, in contrast to the prototypical abstract Cauchy problem from semigroup theory, studying evolutionary equations is based on studying inhomogeneous equations rather than initial value problems. The solution theory for evolutionary equations provides well-posedness under mild assumptions on $A$ and $M(\cdot)$; see, e.g., \cite[Chapter 6]{SeTrWa22}.

The first question we raise and answer is about duality: given an evolutionary equation, how does the adjoint equation look like (if it exists)? In order to do this, we make use of the unweighted $\Lp{2}$-space $\Lp{2}(\R,\HH)$ as pivot space; thus, the $\nu$-adjoint equation is formulated in $\Lp[-\nu]{2}(\R,\HH)$ rather than in $\Lp[\nu]{2}(\R,\HH)$. This idea already came up in~\cite{KPSTW14}.
It turns out that the $\nu$-adjoint equation is again close to an evolutionary equation; however, it is running backwards in time which can be nicely investigated by the time-reversal operator.

As a second goal of the article, we want to study control theory for evolutionary equations. Here, we want to focus on the notion of null-controllability for evolutionary equations, which to the best of our knowledge has not been studied before. Recall that null-controllability for an inhomogeneous initial value problem with a solution existing pointwise in time means that, given a final time horizon $T$, for every initial condition we can find a suitable control function essentially acting as inhomogeneity such that the solution at time $T$ is zero. We refer to \cite{Lions1988, FursikovImanuvilov1996, TucsnakWeiss2009} for null-controllability for evolution equations. When trying to put a corresponding definition for evolutionary equations, we have to deal with the difficulty that the solutions of evolutionary equations are only in $\Lp[\nu]{2}(\R,\HH)$, i.e., the time-regularity is only $\Lp{2}$ and we thus cannot make use of a pointwise statement on the solution at $T$. To overcome this issue, we propose two different notions; one tailored to the $\Lp{2}$-setting and another one which can make use of pointwise statements in a weaker spatial space. In order to study null-controllability, one key method is to establish a so-called observability estimate for the dual problem, see, e.g., \cite{Douglas-66,DoleckiR-77,Lions1988,Carja1988,LebeauR-95,Zuazua-07,TenenbaumT-11,GallaunST-20,BombachGST-23}. Since we have established the duality for evolutionary equations in the first part of the article, we can now apply this duality to prove the corresponding duality statement in the context of null-controllability for evolutionary equations. We again find the well-known equivalence between null-controllability and (a version of) observability estimates for the $\nu$-adjoint system; however due to the lack of pointwise statements, the observability estimate has to be interpreted accordingly. In a special case, we also formulate a version of a pointwise interpretation of the null-controllability and pose an open problem in this situation.

Let us outline the remaining parts of the paper. In Section \ref{sec:Preliminaries}, we will recall the basic theory of evolutionary equations, as well as the duality of null-controllability and obervability estimates in the context of abstract Cauchy problems.
Section \ref{secDefPropNuProd} is devoted to the duality of evolutionary equations. Finally, in Section \ref{sec:ControlTheory}, we first introduce a notion of null-controllability for general evolutionary equations and apply the general duality results of Section \ref{secDefPropNuProd} to establish the duality between null-controllability and an observability estimate for the adjoint evolutionary equation. We further suggest a pointwise interpretation of null-controllability.
\section{Preliminaries}
\label{sec:Preliminaries}
Hilbert spaces $\HH$ considered in this paper are always endowed with an inner product $\iprod[\HH]{\cdot}{\cdot}$ that is antilinear in the first and linear in the second argument. If $A\colon\dom(A)\subseteq \HH\to \HH$ is a closed operator on a Hilbert space $\HH$
and $\dom(A)$ is not explicitly considered as a subspace, then we consider $\dom(A)$ as the Hilbert space endowed with the inner product $\iprod[\HH]{\cdot}{\cdot}+\iprod[\HH]{A\cdot}{A\cdot}$.
\subsection{Evolutionary Equations}
 Here, we will revisit the theory of evolutionary equations that can be found in~\cite{SeTrWa22}.
 For the sake of simplicity, we will restrict ourselves to the autonomous case.

For a Hilbert space $\HH$ and a $\nu\in\R$, we consider the exponentially $\nu$-weighted $\Lp{2}$-space of $\HH$-valued (equivalence classes of) functions
\begin{equation*}
\Lp[\nu]{2} (\R,\HH)\coloneqq\big\{f \mid f\colon\R\to \HH \text{ Bochner-meas.\ and } (t\mapsto \euler^{-\nu t}\norm[\HH]{f(t)})\in\Lp{2} (\R)\big\}\text{.}
\end{equation*}

On this Hilbert space, we consider the unitary multiplication operator
\begin{equation*}
\exp(-\nu\multm)\colon
\begin{cases}
\hfill \Lp[\nu]{2} (\R,\HH) &\to\quad \Lp{2} (\R,\HH)\\
\hfill f&\mapsto\quad \euler^{-\nu\cdot}f(\cdot)
\end{cases}\text{.}
\end{equation*}
Together with the unitary Fourier transformation $\FF$ on $\Lp{2} (\R,\HH)$, this yields the unitary \emph{Fourier--Laplace transformation}
$\LL_\nu\coloneqq\FF\exp(-\nu\multm)\colon \Lp[\nu]{2} (\R,\HH) \to\Lp{2} (\R,\HH)$. The core idea now is to think of $\HH$ as the space of spatial functions
and to write the system we want to model (respectively its solution operator) as the Fourier--Laplace transformation of a holomorphic function that in each point
linearly maps $\HH$ to itself.

For $f,g\in\Lp[\nu]{2} (\R,\HH)$, we define the \emph{weak time derivative} $\partial_{t,\nu}$ in the usual way
\begin{equation*}
f\in \dom(\partial_{t,\nu})\text{ and } g=\partial_{t,\nu}f \quad :\!\!\iff\quad \forall \varphi\in\Cc (\R):\integral{\R}{}{\varphi(t) g(t)}{t}=-\integral{\R}{}{\varphi^\prime(t) f(t)}{t}\text{,}
\end{equation*}
and get (cf.~\cite[Chapter~3.2 and Proposition~4.1.1]{SeTrWa22}):
\begin{lemma}\label{lemmaWeakTimeDerProperties}
For $\nu\in\R$ and a Hilbert space $\HH$, the weak time derivative $\partial_{t,\nu}$ on $\Lp[\nu]{2} (\R,\HH)$ is a densely defined, closed and normal operator. Moreover,
\begin{itemize}
\item $\partial_{t,\nu}=\LL^\ast_\nu (\iu\multm +\nu)\LL_\nu$ holds, where $\multm$ denotes the usual multiplication-by-argument operator on $\Lp{2} (\R,\HH)$ (i.e.\ $\multm f \coloneqq  [x\mapsto xf(x)]$),  and
\item for $\nu\neq 0$ the weak time derivative is one-to-one and onto with bounded inverse
\begin{equation*}
(\partial_{t,\nu})^{-1}g=\begin{cases}
t\mapsto\integral{-\infty}{t}{g(s)}{s}&\text{for }\nu>0,\\
t\mapsto-\integral{t}{\infty}{g(s)}{s}&\text{for }\nu<0,
\end{cases}
\end{equation*}
for $g\in \Lp[\nu]{2} (\R,\HH)$.
\end{itemize}
\end{lemma}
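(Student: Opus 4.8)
The plan is to verify the three claimed properties of $\partial_{t,\nu}$ by transporting everything through the unitary Fourier--Laplace transformation $\LL_\nu$ to the flat space $\Lp{2}(\R,\HH)$, where the operator becomes a multiplication operator. Once the identity $\partial_{t,\nu}=\LL_\nu^\ast(\iu\multm+\nu)\LL_\nu$ is established, all structural statements follow from the corresponding (and much easier) statements about the multiplication operator $\iu\multm+\nu$, since unitary equivalence preserves density of domain, closedness, normality, injectivity, surjectivity, and boundedness of the inverse.

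First I would establish the key identity. I would show that the weak derivative $\partial_{t,\nu}$ is unitarily equivalent via $\LL_\nu$ to multiplication by $\iu\multm+\nu$. The natural route is to test the defining weak formulation against $\Cc(\R)$: for $f\in\dom(\partial_{t,\nu})$ with $g=\partial_{t,\nu}f$, I would insert the decomposition $\LL_\nu=\FF\exp(-\nu\multm)$ and use that $\exp(-\nu\multm)$ turns the weighted weak derivative into the ordinary weak derivative on $\Lp{2}(\R,\HH)$ shifted by $\nu$, i.e.\ $\exp(-\nu\multm)\partial_{t,\nu}\exp(\nu\multm)=\partial_t+\nu$ on the flat space. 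Then the classical fact that $\FF$ conjugates the weak derivative $\partial_t$ into multiplication by $\iu\multm$ (Fourier transform turns differentiation into multiplication by the argument) yields $\FF(\partial_t+\nu)\FF^\ast=\iu\multm+\nu$, and composing gives the claim. Care is needed to check that the domains match exactly, not just that the operators agree on a core; I would argue that $f\in\dom(\partial_{t,\nu})$ if and only if $(\iu\multm+\nu)\LL_\nu f\in\Lp{2}(\R,\HH)$, which is precisely the statement that $\LL_\nu$ maps $\dom(\partial_{t,\nu})$ onto $\dom(\iu\multm+\nu)$.

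Granting the identity, the first bullet and the structural claims of the lemma are immediate: $\iu\multm+\nu$ is a densely defined normal multiplication operator (its adjoint is multiplication by $-\iu\multm+\nu$, and the two commute), hence so is $\partial_{t,\nu}$, being unitarily equivalent to it. For $\nu\neq0$ the symbol $x\mapsto\iu x+\nu$ is bounded away from zero (with $\abs{\iu x+\nu}\geq\abs{\nu}$), so $\iu\multm+\nu$ is boundedly invertible with inverse equal to multiplication by $(\iu x+\nu)^{-1}$, and therefore $\partial_{t,\nu}$ is one-to-one, onto, with bounded inverse $\LL_\nu^\ast(\iu\multm+\nu)^{-1}\LL_\nu$.

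The main obstacle, and the only part requiring genuine computation, is to identify this inverse explicitly with the integral formulas stated. Here I would verify directly that the proposed operators invert $\partial_{t,\nu}$: for $\nu>0$ I would check that $u(t)\coloneqq\int_{-\infty}^t g(s)\operatorname{d}\!s$ lies in $\Lp[\nu]{2}(\R,\HH)$ and satisfies $\partial_{t,\nu}u=g$ in the weak sense, using integration by parts against test functions $\varphi\in\Cc(\R)$ and Fubini's theorem; the exponential weight with $\nu>0$ is exactly what makes the forward integral converge and lie in the weighted space. For $\nu<0$ the backward integral plays the analogous role. Since I have already shown $\partial_{t,\nu}$ is a bijection, exhibiting one right inverse suffices to identify it as the inverse, so no separate uniqueness argument is needed. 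I expect the integrability estimate (controlling the weighted $\Lp{2}$-norm of the antiderivative by that of $g$) to be the most delicate routine step, handled by a Hardy-type inequality or a direct Cauchy--Schwarz estimate exploiting the sign of $\nu$.
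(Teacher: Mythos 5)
The paper does not prove this lemma at all: it is quoted verbatim from the monograph \cite{SeTrWa22} (Chapter~3.2 and Proposition~4.1.1), so there is no in-paper argument to compare against. Judged on its own, your proposal is correct and follows the standard route. The conjugation $\exp(-\nu\multm)\partial_{t,\nu}\exp(\nu\multm)=\partial_t+\nu$ together with $\FF\partial_t\FF^\ast=\iu\multm$ does give the spectral representation, and you rightly flag the only genuinely delicate point there, namely that $\LL_\nu$ maps $\dom(\partial_{t,\nu})$ \emph{onto} $\dom(\multm)$ and not merely a core into it; this is the usual Fourier characterisation of the first Sobolev space and goes through for vector-valued $\Lp{2}$. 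Normality and bounded invertibility for $\nu\neq 0$ (with $\abs{\iu x+\nu}\geq\abs{\nu}$) then transfer by unitary equivalence exactly as you say. For the integral formula, your direct verification is sound: for $\nu>0$ the improper integral converges absolutely since $\integral{-\infty}{t}{\norm[\HH]{g(s)}}{s}\leq(\integral{-\infty}{t}{\euler^{2\nu s}}{s})^{1/2}\norm{g}_{\Lp[\nu]{2}}$, the weighted bound $\norm{u}_{\Lp[\nu]{2}}\leq\tfrac{1}{\nu}\norm{g}_{\Lp[\nu]{2}}$ follows from Young's inequality applied to the convolution with $\euler^{-\nu\cdot}\1_{[0,\infty)}\in\Lp{1}(\R)$ (no Hardy inequality needed), and Fubini against a compactly supported test function gives $\partial_{t,\nu}u=g$. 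Your observation that a right inverse of an already-established bijection must be the inverse closes the argument cleanly. The cited source essentially runs the same ingredients in the opposite order (first the integral operator and its boundedness, then the Fourier--Laplace representation), but nothing of substance differs.
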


The coupling of our evolutionary system will be encoded via a functional calculus for $\partial_{t,\nu}$ in a \emph{material law}.
That is a holomorphic $M\colon U\to \Lb (\HH)$ where $\Lb(\HH)$ denotes the space of bounded linear operators on $\HH$, and $U\subseteq\C$ is open
and contains a half plane $\{z\in \C \mid \Real z\geq \nu\}\subseteq U$ for a $\nu\in\R$ on which $M$ is bounded, i.e.,
\begin{equation*}
\sup_{z\in \C, \Real z\geq \nu}\norm{M(z)}<\infty\text{.}
\end{equation*}
We write $\abscb (M)$ for the infimum over all such $\nu\in\R$. Corresponding to each material law, there exists the bounded \emph{material law operator}
$M(\partial_{t,\nu})\coloneqq \LL^\ast_\nu M(\iu\multm+\nu)\LL_\nu\in\Lb(\Lp[\nu]{2} (\R,\HH))$ where
\begin{equation*}
M(\iu\multm+\nu)\colon
\begin{cases}
\hfill \Lp{2} (\R,\HH) &\to\quad \Lp{2} (\R,\HH)\\
\hfill f&\mapsto\quad M(\iu\cdot+\nu)f(\cdot)
\end{cases}
\end{equation*}
 is a bounded operator.
 
 Finally, consider a spatial operator $A\colon\dom(A)\subseteq \HH\to \HH$ which is closed and densely defined. This operator, e.g., encodes the spatial
 differential operators of our system. Via
 \begin{equation}\label{eqDefinitionTensorTimeLiftingSpatialOperators}
 A_{\Lp[\nu]{2}}\colon
 \begin{cases}
\hfill \Lp[\nu]{2}(\R,\dom(A))\subseteq\Lp[\nu]{2} (\R,\HH) &\to\quad \Lp[\nu]{2} (\R,\HH)\\
\hfill f&\mapsto\quad [t\mapsto A(f(t))]
\end{cases}\text{,}
 \end{equation}
 $A$ can be lifted to a closed and densely defined operator on $\Lp[\nu]{2} (\R,\HH)$ with $(A_{\Lp[\nu]{2}})^{\ast}=(A^\ast)_{\Lp[\nu]{2}}$.
 We will abuse notation writing $A$ for $A_{\Lp[\nu]{2}}$ and $\dom (A)$ for $\Lp[\nu]{2}(\R,\dom(A))$.
 
 With all the necessary tools at hand, we are capable of defining an \emph{evolutionary equation}
 \begin{equation}\label{eqDefinEvolEq}
 (\partial_{t,\nu}M(\partial_{t,\nu})+A)u=f
 \end{equation}
 where $u,f\in \Lp[\nu]{2} (\R,\HH)$. Well-posedness would now mean that $\partial_{t,\nu}M(\partial_{t,\nu})+A$ is boundedly invertible. Unfortunately, the
 operator sum may not be closed. In fact, it already turns out not to be in very simple cases, e.g., the transport equation (cf.~\cite[Example~15.1.1]{SeTrWa22}). As a consequence, the following theorem (see, e.g., \cite[Theorem~6.2.1]{SeTrWa22})
 weakens~\labelcref{eqDefinEvolEq} by closing $\partial_{t,\nu}M(\partial_{t,\nu})+A$.
 \begin{thmPicard}\label{thmPicard}
 Consider a Hilbert space $\HH$, a material law $M\colon\dom(M)\subseteq \C\to \Lb (\HH)$ with
\begin{equation*}
\forall h\in \HH\forall z\in \C,\Real z\geq \nu_0:\Real\iprod[\HH]{h}{zM(z)h}\geq c\norm[\HH]{h}^2 
\end{equation*}
for a $c>0$ and a $\nu_0>\abscb (M)$, and a skew-selfadjoint $A\colon\dom(A)\subseteq \HH\to \HH$. Then,
\begin{itemize}
\item the closed subset $\overline{\partial_{t,\nu}M(\partial_{t,\nu})+A}$ of $\Lp[\nu]{2} (\R,\HH)\times \Lp[\nu]{2} (\R,\HH)$ is an (unbounded) operator for all $\nu\geq\nu_0$.
\item For $\nu\geq\nu_0$, $S_\nu\coloneqq (\overline{\partial_{t,\nu}M(\partial_{t,\nu})+A})^{-1}\in\Lb (\Lp[\nu]{2} (\R,\HH))$ with $\norm{S_\nu}\leq 1/c$.
\item $S_\nu$ is causal: $f\in \dom(S_\nu)=\Lp[\nu]{2} (\R,\HH)$ for $\nu\geq\nu_0$ with $\spt f\subseteq [a,\infty)$ for an $a\in\R$ implies $\spt S_\nu f\subseteq [a,\infty)$.
\item $S_\nu$ is eventually independent of $\nu$: $f\in \dom(S_{\nu_1})\cap\dom(S_{\nu_2})=\Lp[\nu_1]{2} (\R,\HH)\cap \Lp[\nu_2]{2} (\R,\HH)$ for $\nu_1,\nu_2\geq\nu_0$ implies $S_{\nu_1}f=S_{\nu_2}f$.
\item $f\in\dom(\partial_{t,\nu})$ for a $\nu\geq\nu_0$ implies $S_\nu f\in\dom(\partial_{t,\nu})\cap\dom(A)$, i.e., by $M(\partial_{t,\nu})\partial_{t,\nu}\subseteq \partial_{t,\nu}M(\partial_{t,\nu})$ we get a solution in the sense of~\labelcref{eqDefinEvolEq}.
\end{itemize}
 \end{thmPicard}
 
 In order to apply \Cref{thmPicard} to explicit examples, we need to specify $\HH$, $M$, and $A$. For our purposes, it will suffice to consider $A$ that arise from the
 following spatial differential operators.
 
 On an open subset $\Omega\subseteq\R^d$ for $d\in\N$, we have the usual spatial differential operators $\grad_{\Cc}\colon \Cc (\Omega)\subseteq \Lp{2}(\Omega)\to\Lp{2}(\Omega)^d$ and
 $\diver_{\Cc}\colon \Cc (\R)^d\subseteq \Lp{2}(\Omega)^d\to \Lp{2}(\Omega)$. In the case $d=3$, we can also consider $\curl_{\Cc}\colon \Cc (\Omega)^3\subseteq\Lp{2}(\Omega)^3\to\Lp{2}(\Omega)^3$. Via their $\Lp{2}$-adjoints, these can now be extended to their maximal $\Lp{2}$-domains in the usual weak/distributional sense: $\grad \coloneqq -(\diver_{\Cc})^\ast$, $\diver\coloneqq -(\grad_{\Cc})^\ast$ and
 $\curl\coloneqq (\curl_{\Cc})^\ast$. Adjoining again, we obtain the corresponding weak spatial differential operators with homogeneous boundary conditions:
 $\gradz \coloneqq -\diver^\ast=\overline{\grad_{\Cc}}$, $\diverz\coloneqq -\grad^\ast=\overline{\diver_{\Cc}}$ and $\curlz\coloneqq \curl^\ast=\overline{\curl_{\Cc}}$.
 \begin{example}\label{exEvoEq}
 Employing these spatial differential operators, we can provide some classical systems in the framework of evolutionary equations. In each case, we can immediately verify
 the conditions of \Cref{thmPicard}.
 \begin{itemize}
\item \emph{Heat Equation}\newline
For $\nu>0$, an open subset $\Omega\subseteq\R^d$ and $\HH\coloneqq
\Lp{2}(\Omega)\times \Lp{2}(\Omega)^d$,
the Heat Equation with heat source $Q\in\dom (\partial_{t,\nu})$, measurable and bounded
heat conductivity $a\colon\Omega\to \R^{d\times d}$ with
\begin{equation*}
\forall x\in\Omega:\Real a(x)\geq c
\end{equation*}
for a $c>0$ and thermally insulated boundary conditions reads
\begin{equation*}
\bigg(\partial_{t,\nu}\begin{pmatrix} 1&0\\ 0&0\end{pmatrix}+\begin{pmatrix} 0&0\\ 0&a^{-1}\end{pmatrix}
+\begin{pmatrix} 0&\diverz \\ \grad &0\end{pmatrix}\bigg)\begin{pmatrix} \vartheta\\ q\end{pmatrix}
=\begin{pmatrix} Q\\ 0\end{pmatrix}\text{,}
\end{equation*}
where $\vartheta$ is the heat distribution and $q$ the heat flux.

Formally, after elimination of $q$, this corresponds to the well-known standard form
\begin{equation*}
\dot{\vartheta}-\diverz a\grad \vartheta =Q\text{.}
\end{equation*}
\item \emph{Wave Equation}\newline
For $\nu>0$, an open subset $\Omega\subseteq\R^d$ and $\HH\coloneqq
\Lp{2}(\Omega)\times \Lp{2}(\Omega)^d$, the Wave Equation with balancing forces $f\in\dom(\partial_{t,\nu})$,
measurable and bounded elasticity tensor $T\colon\Omega\to \R^{d\times d}$ with
\begin{equation*}
\forall x\in\Omega: T(x)=T(x)^\ast\geq c
\end{equation*}
for a $c>0$ and clamped boundary conditions reads
\begin{equation*}
\bigg(\partial_{t,\nu}\begin{pmatrix} 1&0\\ 0&T^{-1}\end{pmatrix}
-\begin{pmatrix} 0&\diver \\ \gradz &0\end{pmatrix}\bigg)\begin{pmatrix} v\\ \sigma\end{pmatrix}
=\begin{pmatrix} f\\ 0\end{pmatrix}\text{,}
\end{equation*}
where $v\coloneqq \partial_{t,\nu} u$ for the displacement $u$ and $\sigma$ is the stress.

Formally, after elimination of $\sigma$, this corresponds to the well-known standard form
\begin{equation*}
\ddot{u}-\diver T\gradz u =f\text{.}
\end{equation*}
\item \emph{Maxwell's Equations}\newline
For an open subset $\Omega\subseteq\R^3$ and $\HH\coloneqq \Lp{2}(\Omega)^3\times \Lp{2}(\Omega)^3$,
assume that $c>0$ and $\nu_0>0$ exist such that the bounded and measurable dielectric permittivity,
magnetic permeability and electric conductivity $\varepsilon,\mu,\sigma\colon\Omega \to \R^{3\times 3}$
satisfy
\begin{equation*}
\forall x\in\Omega: \mu(x)=\mu(x)^\ast\geq c\text{ and } \nu\varepsilon(x)+\Real\sigma(x)=
\nu\varepsilon(x)^\ast+\Real\sigma(x)\geq c
\end{equation*}
for $\nu\geq \nu_0$. Then, Maxwell's equations with current $j_0\in\dom(\partial_{t,\nu})$ and perfect conductor
boundary conditions read
\begin{equation*}
\bigg(\partial_{t,\nu}\begin{pmatrix} \varepsilon&0\\ 0&\mu\end{pmatrix}+\begin{pmatrix} \sigma&0\\ 0&0\end{pmatrix}
+\begin{pmatrix} 0&-\curl \\ \curlz &0\end{pmatrix}\bigg)\begin{pmatrix} E\\ H\end{pmatrix}
=\begin{pmatrix} j_0\\ 0\end{pmatrix}\text{,}
\end{equation*}
where $E$ is the electric and $H$ the magnetic field. \qedhere
\end{itemize}
 \end{example}
 We see that, in a certain way, the skew-selfadjointness of $A$ asks for fulfilling boundary conditions. On the other hand, initial values are replaced with the weight $\nu$. That means they only appear implicitly in the general case. For specific classes of material laws, explicit initial conditions can be implemented in a distributional sense (see, e.g.,~\cite[Chapter~9]{SeTrWa22} and \Cref{thm:Solution_InitialValues}).
\subsection{Control Theory}

In this subsection, we shortly revisit some aspects of control theory for initial value problems. We restrict to the Hilbert space case; however, the theory is developed also in Banach spaces; cf.\ \cite{Douglas-66,DoleckiR-77,Lions1988,Carja1988}.

Let $X,U$ be Hilbert spaces, $A$ the generator of a strongly continuous semigroup $(S_t)_{t\geq 0}$ on $X$, $B\in \Lb(U,X)$ and $T>0$.
We consider the initial value problem
\begin{equation}\label{eq:Con}
  \begin{aligned}
    \dot{x}(t) & = Ax(t) + Bu(t) \quad(t\in (0,T]),\\
    x(0) & = x_0
\end{aligned}
\end{equation}
for some initial value $x_0\in X$.

We say that the system~\labelcref{eq:Con} is \emph{null-controllable} in time $T$ if for all $x_0\in X$ there exists $u\in \Lp{2}((0,T),U)$ such that for the solution $x$ we have $x(T) = 0$.

As it turns out (see \cite{Douglas-66,DoleckiR-77,Carja1988}) null-controllability in time $T$ is equivalent to an observability inequality for the so-called \emph{adjoint system}, which is given by
\begin{equation}
  \begin{aligned}\label{eq:Obs}
    \dot{\varphi}(t) & = -A^*\varphi(t)\quad(t\in [0,T)),\\
    \varphi(T) & = \varphi_T, \\
    \psi(t) & = B^*\varphi(t) \quad(t\in [0,T]),
\end{aligned}
\end{equation}
for $\varphi_T\in X$.
More precisely, \eqref{eq:Con} is null-controllable in time $T$ if and only if there exists $K\geq 0$ such that for all $\varphi_T\in X$ the solution $\varphi$ of \eqref{eq:Obs} satisfies
\begin{equation}\label{eq:ObsIneqAbsSett}
\norm[X]{\varphi(0)} \leq K \norm{\psi}_{\Lp{2}}.
\end{equation}
Note that \eqref{eq:Obs} is an equation backwards in time.
By means of the substitutions $\widetilde{\varphi}(t)\coloneqq \varphi(T-t)$ and $\widetilde{\psi}(t)\coloneqq  \psi(T-t)$ for $t\in [0,T]$ we can reformulate the adjoint system~\labelcref{eq:Obs} to an equation forward in time of the form
\begin{equation}
\begin{aligned}\label{eq:Obs2}
    \dot{\widetilde{\varphi}}(t) & = A^*\widetilde{\varphi}(t)\quad(t\in [0,T)),\\
    \widetilde{\varphi}(0) & = \widetilde{\varphi}_0, \\
    \widetilde{\psi}(t) & = B^*\widetilde{\varphi}(t) \quad(t\in [0,T]),
\end{aligned}
\end{equation}
for $\widetilde{\varphi}_0\in X$. The observability inequality~\labelcref{eq:ObsIneqAbsSett} then turns into a so-called \emph{final-state} observability inequality
\[\norm[X]{\widetilde{\varphi}(T)} \leq K \norm{\widetilde{\psi}}_{\Lp{2}}.\]

\begin{example}\label{exHGCont}
 We review the duality for the systems (without external sources) in Example \ref{exEvoEq}, formulated by means of semigroup theory.
 \begin{itemize}
\item The controlled {Heat Equation} can be written as
    \begin{align*}
        \dot{\vartheta}(t) & = \diver_0 a \grad \vartheta(t) + Bu(t) \quad(t\in (0,T]),\\
        \vartheta(0) & = \vartheta_0
    \end{align*}
    in $\Lp{2}(\Omega)$, where $\Omega\subseteq\R^d$ is open.
    Here, $A\coloneqq \diver_0 a \grad$ is the elliptic operator in $\Lp{2}(\Omega)$ with Neumann boundary conditions and $B\in \Lb(U,\Lp{2}(\Omega))$ and $U$ is some Hilbert space.
    Then, the adjoint system is given by
    \begin{align*}
        \dot{\varphi}(t) & = -\diver_0 a^* \grad \varphi(t) \quad(t\in [0,T))\\
        \varphi(T) & = \varphi_T,\\
        \psi(t) & = B^*\varphi(t) \quad (t\in [0,T]).
    \end{align*}
    Thus, the adjoint system is governed by a backward heat equation.
\item The controlled {Wave Equation} can be written as\footnote{We are using $[0,S]$ for the time interval in this example since $T$ already stands  for the elasticity.}
\begin{align*}
        \ddot{x}(t) & = \diver T \grad_0 x(t) + Bu(t) \quad(t\in (0,S]),\\
        x(0) & = x_0,\\
        \dot{x}(0) & = x_1,
    \end{align*}
    in $\Lp{2}(\Omega)$, where $\Omega\subseteq\R^d$ is open.
    Here, $A\coloneqq \diver T \grad_0$ is the elliptic operator in $\Lp{2}(\Omega)$ with Dirichlet boundary conditions and $B\in \Lb(U,\Lp{2}(\Omega))$ and $U$ is some Hilbert space.
    Introducing $v\coloneqq  \dot{x}$, we can reformulate this as a first-order system
    \begin{align*}
        \dot{\begin{pmatrix}x\\v\end{pmatrix}}(t) & = \begin{pmatrix} 0 & 1 \\ \diver T \grad_0  & 0 \end{pmatrix}\begin{pmatrix}x\\v\end{pmatrix}(t)  + \begin{pmatrix}0\\ Bu(t)\end{pmatrix} \quad(t\in (0,S]),\\
        \begin{pmatrix} x\\v\end{pmatrix}(0) & = \begin{pmatrix} x_0 \\x_1\end{pmatrix}.
    \end{align*}
    Then, the adjoint system is given by
    \begin{align*}
        \dot{\begin{pmatrix} \varphi\\\psi\end{pmatrix}}(t) & = -\begin{pmatrix} 0 & \diver T \grad_0 \\ 1 & 0 \end{pmatrix} \begin{pmatrix} \varphi\\\psi\end{pmatrix}(t) \quad(t\in [0,S))\\
        \begin{pmatrix} \varphi\\\psi\end{pmatrix}(S) & = \begin{pmatrix} \varphi_S\\\psi_S\end{pmatrix} \\
        \eta(t) & = B^*\psi(t) \quad (t\in [0,S]).
    \end{align*}
    Put differently, for $\psi$ we observe
    \[\ddot{\psi}(t) = \diver T \grad_0 \psi(t) \quad(t\in 
    [0,S)),\]
    so the adjoint system is governed by a wave equation.
    
\item The controlled {Maxwell's Equations} in the simplified situation with $\varepsilon=\mu=1$ can be written as
\begin{align*}
    \dot{\begin{pmatrix} E\\ H\end{pmatrix}}(t) & = \begin{pmatrix} -\sigma & \curl \\ -\curl_0  & 0 \end{pmatrix}\begin{pmatrix} E\\ H\end{pmatrix}(t)  + Bu(t) \quad(t\in (0,T]),\\
        \begin{pmatrix}  E\\ H\end{pmatrix}(0) & = \begin{pmatrix}  E_0 \\ H_0\end{pmatrix},
\end{align*}
in $\Lp{2}(\Omega)^3\times \Lp{2}(\Omega)^3$, where $\Omega\subseteq\R^3$ is open, $B \in \Lb(U,\Lp{2}(\Omega)^3\times \Lp{2}(\Omega)^3)$ for some Hilbert space $U$.
Then, the adjoint system is given by
    \begin{align*}
        \dot{\begin{pmatrix} \varphi\\\psi\end{pmatrix}}(t) & = -\begin{pmatrix} -\sigma^* & -\curl \\ \curl_0 & 0 \end{pmatrix} \begin{pmatrix} \varphi\\\psi\end{pmatrix}(t) \quad(t\in [0,T))\\
        \begin{pmatrix} \varphi\\\psi\end{pmatrix}(T) & = \begin{pmatrix} \varphi_T\\\psi_T\end{pmatrix} \\
        \eta(t) & = B^*\begin{pmatrix}\varphi\\\psi\end{pmatrix}(t) \quad (t\in [0,T]).
    \end{align*}
    This is again a Maxwell's equation with $\sigma$ replaced by $-\sigma^\ast$. \qedhere
\end{itemize}
 \end{example}

\section{Evolutionary Equations Backwards in Time}\label{secDefPropNuProd}
For a Hilbert space $\HH$, a material law $M\colon\dom(M)\subseteq \C\to \Lb (\HH)$ with
\begin{equation*}
\forall h\in \HH\forall z\in \C,\Real z\geq \nu_0:\Real\iprod[\HH]{h}{zM(z)h}\geq c\norm[\HH]{z}^2 
\end{equation*}
for a $c>0$ and a skew-selfadjoint $A\colon\dom(A)\subseteq \HH\to \HH$, we consider the evolutionary equation arising from the densely defined and closable operator (cf.~\labelcref{eqDefinEvolEq}
and~\Cref{thmPicard})
\begin{equation}\label{eqSecDefPropNuProd1}
\partial_{t,\nu}M(\partial_{t,\nu})+A=\LL^\ast_\nu (\iu\multm +\nu)M(\iu\multm+\nu)\LL_\nu+A
\end{equation}
where $\nu \geq \nu_0 > \max(\abscb (M),0)$ and $A$ is considered as its extension $A_{\Lp[\nu]{2}}$ to $\Lp[\nu]{2} (\R,\HH)$ (cf.~\labelcref{eqDefinitionTensorTimeLiftingSpatialOperators}).
Via the space $\Lp[\nu]{2}$, this operator itself implicitly only allows for solutions forward in time. Therefore, interpreting the adjoint system~\labelcref{eq:Obs}
as another evolutionary equation running backwards in time requires us to somehow get from $\Lp[\nu]{2}$ to $\Lp[-\nu]{2}$ while adjoining~\labelcref{eqSecDefPropNuProd1}.
\subsection{The \texorpdfstring{$\nu$}{nu}-Product}
Since $\exp(2\nu\multm)\colon \Lp[-\nu]{2} (\R,\HH)\to \Lp[\nu]{2} (\R,\HH)$
is a unitary map for a Hilbert space $\HH$ and $\nu\in\R$, we have
\begin{equation}\label{eq:ConnWeiInnProdNuProd}
\integral{\R}{}{\iprod[\HH]{f(t)}{g(t)}}{t}=\integral{\R}{}{\iprod[\HH]{f(t)}{\exp(2\nu t)g(t)}\euler^{-2\nu t}}{t}
\in\C
\end{equation}
for $f\in \Lp[\nu]{2} (\R,\HH)$ and $g\in \Lp[-\nu]{2} (\R,\HH)$.
\begin{definition}
We define the \emph{$\nu$-product}
\begin{equation*}
\dprod[\HH,\nu]{\cdot}{\cdot}\colon
\begin{cases}
\hfill \Lp[\nu]{2} (\R,\HH)\times \Lp[-\nu]{2} (\R,\HH)&\to\quad \C\\
\hfill (f,g)&\mapsto\quad \integral{\R}{}{\iprod[\HH]{f(t)}{g(t)}}{t}.
\end{cases} \qedhere
\end{equation*}
\end{definition}
If clear from the context, we will drop the index $\HH$. The $\nu$-product inherits many properties from the weighted $\Lp{2}$-products.
Note that the $\nu$-product was already implicitly used in~\cite{KPSTW14}.
\begin{remark}\label{remFirstPropNuProd}
Obviously, the $\nu$-product ist antilinear in the first and linear in the second argument, conjugate
symmetric in the sense that
\begin{equation*}
\overline{\dprod[\nu]{f}{g}}=\dprod[-\nu]{g}{f}
\end{equation*}
holds and we have a Cauchy--Schwarz inequality in the sense that
\begin{equation*}
\abs{\dprod[\nu]{f}{g}}=\bigabs{\integral{\R}{}{\iprod[\HH]{f(t)}{\exp(2\nu t)g(t)}\euler^{-2\nu t}}{t}}\leq \norm{f}_{\Lp[\nu]{2}}\norm{\exp(2\nu\multm)g}_{\Lp[\nu]{2}}=\norm{f}_{\Lp[\nu]{2}}\norm{g}_{\Lp[-\nu]{2}}
\end{equation*}
holds true for $f\in \Lp[\nu]{2} (\R,\HH)$ and $g\in \Lp[-\nu]{2} (\R,\HH)$.
\end{remark}
With that we also get an adapted version of the Riesz representation theorem.
\begin{lemma}
    Let $\HH$ be a Hilbert space and $\nu\in\R$. Consider the mappings
\begin{equation*}
\dprod[\nu]{f}{\cdot}\colon
\Lp[-\nu]{2} (\R,\HH)\to \C
\end{equation*}
and
\begin{equation*}
\dprod[\nu]{\cdot}{g}\colon \Lp[\nu]{2} (\R,\HH)\to \C
\end{equation*}
for $f\in \Lp[\nu]{2} (\R,\HH)$ and $g\in \Lp[-\nu]{2} (\R,\HH)$. They define (anti-)linear bounded functionals attaining their respective operator
norms $\norm{f}_{\Lp[\nu]{2}}$ and $\norm{g}_{\Lp[-\nu]{2}}$, i.e.,
    \begin{equation*}
    \norm{f}_{\Lp[\nu]{2}} = \max_{\substack{h\in \Lp[-\nu]{2} (\R,\HH)\\ \norm{h}_{\Lp[-\nu]{2}}\leq 1}} \abs{\dprod[\nu]{f}{h}}
    \qquad\text{and}\qquad
    \norm{g}_{\Lp[-\nu]{2}} = \max_{\substack{h\in \Lp[\nu]{2} (\R,\HH)\\ \norm{h}_{\Lp[\nu]{2}}\leq 1}} \abs{\dprod[\nu]{h}{g}}\text{.}
    \end{equation*}
Moreover, the mapping
\begin{equation*}
\Phi\colon\begin{cases}
\hfill \Lp[\nu]{2} (\R,\HH)&\to\quad \Lp[-\nu]{2} (\R,\HH)^\prime\\
\hfill f&\mapsto\quad \dprod[\nu]{f}{\cdot}
\end{cases}
\end{equation*}
is an antilinear bijective isometry.
\end{lemma}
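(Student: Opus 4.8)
The plan is to reduce every assertion to the ordinary Riesz representation theorem on the Hilbert space $\Lp[\nu]{2}(\R,\HH)$ by exploiting the unitary $\exp(2\nu\multm)\colon\Lp[-\nu]{2}(\R,\HH)\to\Lp[\nu]{2}(\R,\HH)$ together with its inverse $\exp(-2\nu\multm)\colon\Lp[\nu]{2}(\R,\HH)\to\Lp[-\nu]{2}(\R,\HH)$. Reading off \labelcref{eq:ConnWeiInnProdNuProd}, one has for $f\in\Lp[\nu]{2}(\R,\HH)$ and $g\in\Lp[-\nu]{2}(\R,\HH)$ the key identity
\begin{equation*}
\dprod[\nu]{f}{g}=\iprod[\Lp[\nu]{2}]{f}{\exp(2\nu\multm)g},
\end{equation*}
so that $\dprod[\nu]{f}{\cdot}$ is literally the composition of the unitary $\exp(2\nu\multm)$ with the canonical functional $\iprod[\Lp[\nu]{2}]{f}{\cdot}$ on $\Lp[\nu]{2}(\R,\HH)$. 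Since a unitary preserves norms and maps the closed unit ball bijectively onto the closed unit ball, all three assertions should transport directly from their classical counterparts.

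First I would establish the two norm-attainment identities. Boundedness of the functionals together with the upper bounds is exactly the Cauchy--Schwarz inequality recorded in \Cref{remFirstPropNuProd}, which already gives $\norm{f}_{\Lp[\nu]{2}}\geq\sup_{\norm{h}_{\Lp[-\nu]{2}}\leq 1}\abs{\dprod[\nu]{f}{h}}$. For attainment it suffices to exhibit an explicit maximizer: for $f\neq 0$ the element $h_0\coloneqq\norm{f}_{\Lp[\nu]{2}}^{-1}\exp(-2\nu\multm)f$ lies in the unit ball of $\Lp[-\nu]{2}(\R,\HH)$ because $\exp(-2\nu\multm)$ is unitary, and using the key identity one computes $\dprod[\nu]{f}{h_0}=\norm{f}_{\Lp[\nu]{2}}^{-1}\iprod[\Lp[\nu]{2}]{f}{f}=\norm{f}_{\Lp[\nu]{2}}$; the case $f=0$ is trivial. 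The second identity follows symmetrically with $h_0\coloneqq\norm{g}_{\Lp[-\nu]{2}}^{-1}\exp(2\nu\multm)g$, or alternatively by invoking the conjugate symmetry $\overline{\dprod[\nu]{h}{g}}=\dprod[-\nu]{g}{h}$ from \Cref{remFirstPropNuProd} to reduce it to the first identity read at $-\nu$.

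Finally I would treat $\Phi$. Antilinearity is immediate from the antilinearity of the $\nu$-product in its first argument, and the first norm-attainment identity just proved yields $\norm{\Phi(f)}=\norm{f}_{\Lp[\nu]{2}}$, so $\Phi$ is an antilinear isometry and in particular injective. The one genuinely substantive point is surjectivity. Given $\varphi\in\Lp[-\nu]{2}(\R,\HH)^\prime$, I would precompose with the unitary to obtain the bounded linear functional $\psi\colon k\mapsto\varphi(\exp(-2\nu\multm)k)$ on $\Lp[\nu]{2}(\R,\HH)$; the classical Riesz theorem produces $f\in\Lp[\nu]{2}(\R,\HH)$ with $\psi(k)=\iprod[\Lp[\nu]{2}]{f}{k}$ for all $k$, and substituting $k=\exp(2\nu\multm)g$ (so that $\exp(-2\nu\multm)k=g$) gives $\varphi(g)=\iprod[\Lp[\nu]{2}]{f}{\exp(2\nu\multm)g}=\dprod[\nu]{f}{g}$ for every $g\in\Lp[-\nu]{2}(\R,\HH)$, i.e.\ $\varphi=\Phi(f)$.

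The only obstacle worth flagging is bookkeeping: one must keep straight which copy of the multiplication operator, $\exp(2\nu\multm)$ or $\exp(-2\nu\multm)$, maps which weighted space into which, and record that the induced map on unit balls is a bijection. Once the reduction identity is in place, the entire argument is the standard Riesz representation theorem transported along a unitary, with no analytic difficulty beyond what \Cref{remFirstPropNuProd} already supplies.
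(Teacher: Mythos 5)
Your proposal is correct and follows essentially the same route as the paper: Cauchy--Schwarz from \Cref{remFirstPropNuProd} for boundedness, testing with $\exp(-2\nu\multm)f$ (up to normalisation) for norm attainment, and the classical Riesz representation theorem transported along the unitary $\exp(\pm 2\nu\multm)$ for surjectivity of $\Phi$. The only cosmetic difference is that the paper applies Riesz directly on $\Lp[-\nu]{2}(\R,\HH)$ and then rewrites the representing functional as $\Phi(\exp(2\nu\multm)g)$, whereas you precompose with the unitary and apply Riesz on $\Lp[\nu]{2}(\R,\HH)$; these are interchangeable.
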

\begin{proof}
The linearity and boundedness of $\dprod[\nu]{f}{.}$ with upper bound $\norm{f}_{\Lp[\nu]{2}}$ for the norm immediately follow from \Cref{remFirstPropNuProd}.
Testing with $\exp(-2\nu\multm)f$ and using that $\exp(-2\nu\multm)\colon \Lp[\nu]{2} (\R,\HH)\to \Lp[-\nu]{2} (\R,\HH)$ is unitary, we see 
\begin{equation*}
\dprod[\nu]{f}{\exp(-2\nu\multm)f}=\integral{\R}{}{\iprod[\HH]{f(t)}{f(t)}\euler^{-2\nu t}}{t}=\norm{f}_{\Lp[\nu]{2}}^2=\norm{f}_{\Lp[\nu]{2}}\norm{\exp(-2\nu\multm)f}_{\Lp[-\nu]{2}}\text{,}
\end{equation*}
i.e., the operator norm $\norm{f}_{\Lp[\nu]{2}}$ is attained. Analogously, the statements regarding $g$ are obtained. It remains to show that $\Phi$ is onto. This immediately follows from the Riesz representation theorem on $\Lp[-\nu]{2} (\R,\HH)$ since any element of $\Lp[-\nu]{2} (\R,\HH)^\prime$ is of the form
\begin{equation*}
\integral{\R}{}{\iprod[\HH]{g(t)}{.}\euler^{2\nu t}}{t}=\dprod[\nu]{\exp(2\nu\multm)g}{\cdot}=\Phi(\exp(2\nu\multm)g)\text{,}
\end{equation*}
where $g\in\Lp[-\nu]{2} (\R,\HH)$ and thus $\exp(2\nu\multm)g\in\Lp[\nu]{2} (\R,\HH)$.
\end{proof}
\begin{remark}
For $F\subseteq \Lp[\nu]{2} (\R,\HH)$ and $G\subseteq \Lp[-\nu]{2} (\R,\HH)$,
the continuity and the conjugate symmetry of the $\nu$-product imply that the annihilators (we will once again drop the index $\HH$ if clear from context)
\begin{gather*}
F^{\perp_{\HH,\nu}}\coloneqq \{g\in \Lp[-\nu]{2} (\R,\HH)\mid\forall f\in F:\dprod[\nu]{f}{g}=0\}=\bigcap_{f\in F}\ker\dprod[\nu]{f}{\cdot}
\subseteq \Lp[-\nu]{2} (\R,\HH)\\
\prescript{\perp_{\HH,\nu}}{}{G}\coloneqq\{f\in \Lp[\nu]{2} (\R,\HH)\mid\forall g\in G:\dprod[\nu]{f}{g}=0\}=\bigcap_{g\in G}
\ker\dprod[\nu]{\cdot}{g}\subseteq \Lp[\nu]{2} (\R,\HH)
\end{gather*}
are closed linear subspaces with $\overline{F}^{\perp_\nu}=F^{\perp_\nu}=\prescript{\perp_{-\nu}}{}{F}$ and $\prescript{\perp_\nu}{}{\overline{G}}=\prescript{\perp_\nu}{}{G}=
G^{\perp_{-\nu}}$. Moreover, we can easily verify
\begin{equation*}
F^{\perp_\nu}=\exp(-2\nu\multm)(F^{\perp})=(\exp(-2\nu\multm)F)^{\perp}
\end{equation*}
and thus get
\begin{equation*}
\big(\Lp[\nu]{2} (\R,\HH)\big)^{\perp_\nu}=\{0\}
\end{equation*}
as well as
\begin{equation*}\prescript{\perp_\nu}{}{(F^{\perp_\nu})}=(\exp(-2\nu\multm)(F^{\perp}))^{\perp_{-\nu}}=(F^\perp)^\perp=\cls (F)\text{.} \qedhere
\end{equation*}
\end{remark}

\subsection{The \texorpdfstring{$\nu$}{nu}-Adjoint}
The reason we introduced the $\nu$-product was to get from $\Lp[\nu]{2}$ to $\Lp[-\nu]{2}$ by adjoining. 
\begin{definition}
For $\nu\in\R$, Hilbert spaces $\HH_0,\HH_1$ and a relation $A\subseteq \Lp[\nu]{2} (\R,\HH_0)\times \Lp[\nu]{2} (\R,\HH_1)$, we define the \emph{$\nu$-adjoint}
\begin{equation}
A^{\ast_\nu}\coloneqq -\Big((A^{-1})^{\perp_{\HH_1\times \HH_0,\nu\times\nu}}\Big)\subseteq \Lp[-\nu]{2} (\R,\HH_1)\times \Lp[-\nu]{2} (\R,\HH_0)\text{,}\label{eqDefBIT}
\end{equation}
where we endowed $\Lp[\nu]{2} (\R,\HH_1)\times \Lp[\nu]{2} (\R,\HH_0)$ with $\dprod[\HH_1,\nu]{\cdot}{\cdot}+\dprod[\HH_0,\nu]{\cdot}{\cdot}$ and
$\perp_{\HH_1\times \HH_0,\nu\times\nu}$ stands for the respective annihilators.
\end{definition}
\begin{remark}\label{remAdjointBIT}
Reformulating~\labelcref{eqDefBIT}, we have
\begin{equation}\label{eqRemAdjointBIT1}
\begin{split}
A^{\ast_\nu}&=\{(g_1,g_2)\in \Lp[-\nu]{2} (\R,\HH_1)\times \Lp[-\nu]{2} (\R,\HH_0)\mid \forall (f_1,f_2)\in A:
\dprod[\HH_0,\nu]{f_1}{g_2}=\dprod[\HH_1,\nu]{f_2}{g_1}\}\\
&= (\exp(-2\nu\multm)\times \exp(-2\nu\multm))(A^{\ast})\\
&=((\exp(-2\nu\multm)\times \exp(-2\nu\multm))(A))^{\ast}\text{.}
\end{split}
\end{equation}
For a linear operator $A\subseteq \Lp[\nu]{2} (\R,\HH_0)\times \Lp[\nu]{2} (\R,\HH_1)$ and $g\in \Lp[-\nu]{2} (\R,\HH_1),h\in \Lp[-\nu]{2} (\R,\HH_0)$, we obtain
\begin{equation}\label{eqRemAdjointBIT2}
(g,h)\in A^{\ast_\nu}\iff \forall f\in\dom(A): \dprod[\HH_0,\nu]{f}{h}=\dprod[\HH_1,\nu]{A f}{g}\text{.} \qedhere
\end{equation}
\end{remark}
\begin{remark}\label{remSecondAdjointBIT}
Analogously to the classical Hilbert space adjoint, the $\nu$-adjoint has the following properties:
\begin{itemize}
\item In~\labelcref{eqDefBIT}, any order of application of $-$, $(\cdot)^{-1}$ and
$\perp_{\HH_1\times \HH_0,\nu\times\nu}$ or $\perp_{\HH_0\times \HH_1,\nu\times\nu}$ yields the same relation.
\item $A^{\ast_\nu}$ is a closed linear subspace and $A^{\ast_\nu}=(\overline{A})^{\ast_\nu}$ holds.
\item Identity~\labelcref{eqRemAdjointBIT1} implies
\begin{equation*}
\begin{split}
(A^{\ast_\nu})^{\ast_{-\nu}}&=\big((\exp(2\nu\multm)\times \exp(2\nu\multm))\big((\exp(-2\nu\multm)\times \exp(-2\nu\multm))(A^{\ast})\big)\big)^\ast\\
&=(A^\ast)^\ast=\cls(A)\text{.}
\end{split}
\end{equation*}
\end{itemize}
If $A$ even is a linear subspace, we can additionally infer from~\labelcref{eqRemAdjointBIT1}:
\begin{itemize}
\item $\ran (A)^{\perp_\nu}=\exp(-2\nu\multm)(\ran (A)^{\perp})=\exp(-2\nu\multm)(\ker(A^\ast))=\ker (A^{\ast_\nu})$
\item $\ker(\overline{A})^{\perp_\nu}=\exp(-2\nu\multm)(\ker(\overline{A})^{\perp})=\exp(-2\nu\multm)(\cran(A^\ast))=\cran(A^{\ast_\nu})$
\item $A^{\ast_\nu}$ is a linear operator if and only if $A$ is densely defined. If $A$ is a densely defined operator, then~\labelcref{eqRemAdjointBIT2} reads
\begin{equation}\label{eqRemSecondAdjointBIT1}
\dprod[\HH_0,\nu]{f}{A^{\ast_\nu}g}=\dprod[\HH_1,\nu]{A f}{g}
\end{equation}
for all $f\in\dom(A)$ and $g\in\dom(A^{\ast_\nu})$.
\end{itemize}
Once again, \labelcref{eqRemAdjointBIT1}  shows that a linear operator $A$ is closable if and only if $A^{\ast_\nu}$ is densely defined,
and for a linear $A$ that the closure $\overline{A}$ is a bounded operator (including $\dom(\overline{A})=\Lp[\nu]{2} (\R,\HH)$) if and only if $A^{\ast_\nu}$ is.
In the latter case, $\norm{\overline{A}}=\norm{A^{\ast_\nu}}$ holds.
\end{remark}
We close with a few more easy technical lemmas that we will use in order to compute the $\nu$-adjoints of concrete operators.
\begin{lemma}
    Consider $\nu\in\R$ and Hilbert spaces $\HH_0,\HH_1,\HH_2$. For $A,C\in\Lb\bigl(\Lp[\nu]{2}(\R,\HH_0), \Lp[\nu]{2}(\R,\HH_1)\bigr)$ and $B\in\Lb\bigl(\Lp[\nu]{2}(\R,\HH_1), \Lp[\nu]{2}(\R,\HH_2)\bigr)$, we have $(BA)^{\ast_\nu} = A^{\ast_\nu}B^{\ast_\nu}$ and $(A+C)^{\ast_\nu}=A^{\ast_\nu}+C^{\ast_\nu}$.
\end{lemma}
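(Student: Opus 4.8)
The plan is to reduce both identities to the single characterising relation of the $\nu$-adjoint recorded in \labelcref{eqRemSecondAdjointBIT1}, namely $\dprod[\nu]{f}{A^{\ast_\nu}g}=\dprod[\nu]{Af}{g}$, combined with the non-degeneracy of the $\nu$-product. Since $A$, $B$, $C$ are bounded and everywhere defined, \Cref{remSecondAdjointBIT} guarantees that $A^{\ast_\nu}$, $B^{\ast_\nu}$, $C^{\ast_\nu}$ are bounded and everywhere defined as well; in particular the compositions and sums in the claim are genuine bounded operators between the correct $\Lp[-\nu]{2}$-spaces, so it suffices to verify the two operator identities by evaluating on arbitrary vectors. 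For the closing step I will use that a vector $h\in\Lp[-\nu]{2}(\R,\HH_0)$ with $\dprod[\nu]{f}{h}=0$ for all $f\in\Lp[\nu]{2}(\R,\HH_0)$ must vanish, which is exactly the identity $\big(\Lp[\nu]{2}(\R,\HH_0)\big)^{\perp_\nu}=\{0\}$ established in the earlier remark on annihilators.

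For the product, fix $f\in\Lp[\nu]{2}(\R,\HH_0)$ and $g\in\Lp[-\nu]{2}(\R,\HH_2)$ and apply \labelcref{eqRemSecondAdjointBIT1} twice, first to $B$ with the argument $Af\in\Lp[\nu]{2}(\R,\HH_1)$ and then to $A$ with the argument $B^{\ast_\nu}g\in\Lp[-\nu]{2}(\R,\HH_1)$:
\begin{equation*}
\dprod[\HH_2,\nu]{BAf}{g}=\dprod[\HH_1,\nu]{Af}{B^{\ast_\nu}g}=\dprod[\HH_0,\nu]{f}{A^{\ast_\nu}B^{\ast_\nu}g}.
\end{equation*}
On the other hand, the defining relation for $(BA)^{\ast_\nu}$ gives $\dprod[\HH_2,\nu]{BAf}{g}=\dprod[\HH_0,\nu]{f}{(BA)^{\ast_\nu}g}$. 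Subtracting, $\dprod[\HH_0,\nu]{f}{\big((BA)^{\ast_\nu}-A^{\ast_\nu}B^{\ast_\nu}\big)g}=0$ for every $f$, so by non-degeneracy $\big((BA)^{\ast_\nu}-A^{\ast_\nu}B^{\ast_\nu}\big)g=0$; as $g$ was arbitrary, $(BA)^{\ast_\nu}=A^{\ast_\nu}B^{\ast_\nu}$. The sum identity is even more direct: using bilinearity of the $\nu$-product together with \labelcref{eqRemSecondAdjointBIT1} applied to $A$ and to $C$ separately, one gets $\dprod[\HH_1,\nu]{(A+C)f}{g}=\dprod[\HH_0,\nu]{f}{(A^{\ast_\nu}+C^{\ast_\nu})g}$ for $f\in\Lp[\nu]{2}(\R,\HH_0)$, $g\in\Lp[-\nu]{2}(\R,\HH_1)$, and comparison with the defining relation for $(A+C)^{\ast_\nu}$ followed by the same non-degeneracy argument yields $(A+C)^{\ast_\nu}=A^{\ast_\nu}+C^{\ast_\nu}$.

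I expect no serious obstacle, as the content is purely formal manipulation of the pairing. The only point needing care — and the sole place where the $\nu$-setting departs from the classical Hilbert-space computation — is this closing step: because $\dprod[\nu]{\cdot}{\cdot}$ pairs the two \emph{distinct} spaces $\Lp[\nu]{2}(\R,\HH_0)$ and $\Lp[-\nu]{2}(\R,\HH_0)$ rather than being an inner product on one space, I cannot conclude from an expression of the form $\dprod[\nu]{h}{h}$ but must instead invoke $\big(\Lp[\nu]{2}\big)^{\perp_\nu}=\{0\}$ to pass from the scalar identities to the operator identities. Alternatively, one could bypass the pairing altogether and argue through the conjugation formula $A^{\ast_\nu}=\exp(-2\nu\multm)A^{\ast}\exp(2\nu\multm)$ implicit in \labelcref{eqRemAdjointBIT1}, so that both claims descend from the classical facts $(BA)^{\ast}=A^{\ast}B^{\ast}$ and $(A+C)^{\ast}=A^{\ast}+C^{\ast}$ once the intermediate weight unitaries telescope via $\exp(2\nu\multm)\exp(-2\nu\multm)=\mathrm{id}$.
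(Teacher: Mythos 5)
Your proof is correct and follows essentially the same route as the paper: the paper's own proof is a one-liner invoking \labelcref{eqRemSecondAdjointBIT1} together with the boundedness of $A$, $B$, $C$, $BA$, $A+C$ and their $\nu$-adjoints from \Cref{remSecondAdjointBIT}, which is exactly the computation you spell out (including the non-degeneracy step needed to pass from the scalar identities to the operator identities). The only cosmetic quibble is that the $\nu$-product is sesquilinear rather than bilinear, which does not affect the argument.
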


\begin{proof}
This immediately follows from~\labelcref{eqRemSecondAdjointBIT1}, since $A, B, C, BA, A + C$ and their respective $\nu$-adjoints are bounded by \Cref{remSecondAdjointBIT}.
\end{proof}

\begin{lemma}\label{lemmaSumProdOfNuAdjRel}
    Consider $\nu\in\R$ and Hilbert spaces $\HH_0,\HH_1,\HH_2$. For $A,C\subseteq \Lp[\nu]{2}(\R,\HH_0)\times \Lp[\nu]{2}(\R,\HH_1)$ and $B\subseteq\Lp[\nu]{2}(\R,\HH_1)\times \Lp[\nu]{2}(\R,\HH_2)$, we have $(BA)^{\ast_\nu} \supseteq
     \overline{A^{\ast_\nu}B^{\ast_\nu}}$ and $(A+C)^{\ast_\nu}\supseteq \overline{A^{\ast_\nu}+C^{\ast_\nu}}$. If $B$ is a closed linear subspace and $A\in \Lb\bigl(\Lp[\nu]{2}(\R,\HH_0), \Lp[\nu]{2}(\R,\HH_1)\bigr)$,
     then we have $(BA)^{\ast_\nu}=\overline{A^{\ast_\nu}B^{\ast_\nu}}$. Alternatively, if $B\in \Lb\bigl(\Lp[\nu]{2}(\R,\HH_1), \Lp[\nu]{2}(\R,\HH_2)\bigr)$ and $A$ either is a closed linear subspace or
      a densely defined operator,
     then we have $(BA)^{\ast_\nu}=A^{\ast_\nu}B^{\ast_\nu}$.
\end{lemma}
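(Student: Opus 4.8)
The plan is to transport all four assertions to the ordinary Hilbert-space adjoint of linear relations and then to run the elementary adjoint calculus there. Writing $U\coloneqq\exp(-2\nu\multm)$ for the unitary map from $\Lp[\nu]{2}(\R,\HH_i)$ onto $\Lp[-\nu]{2}(\R,\HH_i)$ ($i=0,1,2$), identity \labelcref{eqRemAdjointBIT1} reads $A^{\ast_\nu}=(U\times U)(A^\ast)$, and analogously for $B$, $C$, $BA$ and $A+C$, where $(\cdot)^\ast$ denotes the ordinary adjoint relation in the $\Lp[\nu]{2}$-spaces. Since $U$ is unitary, hence a linear homeomorphism acting consistently on the intermediate space $\Lp[\nu]{2}(\R,\HH_1)$, the componentwise map $U\times U$ is a bijection that intertwines composition, addition and closure of relations and preserves boundedness, everywhere-definedness and closedness. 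It therefore suffices to prove the four statements for $(\cdot)^\ast$, after which applying $U\times U$ returns the $\nu$-versions.

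First I would record the two inclusions. A short element chase with the defining testing identity for the adjoint relation gives $A^\ast B^\ast\subseteq(BA)^\ast$: for $(h_1,h_2)\in A^\ast B^\ast$ pick the connecting $k$ with $(h_1,k)\in B^\ast$ and $(k,h_2)\in A^\ast$, and for an arbitrary $(x,z)\in BA$ with bridging vector $y$ (so $(x,y)\in A$, $(y,z)\in B$) conclude $\iprod{x}{h_2}=\iprod{y}{k}=\iprod{z}{h_1}$; the sum case $A^\ast+C^\ast\subseteq(A+C)^\ast$ is entirely analogous. As adjoint relations are always closed (\Cref{remSecondAdjointBIT}), passing to closures yields $\overline{A^\ast B^\ast}\subseteq(BA)^\ast$ and $\overline{A^\ast+C^\ast}\subseteq(A+C)^\ast$, i.e.\ the first two claims.

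For the equality with $B$ bounded and everywhere defined (the fourth assertion) I would prove the reverse inclusion $(BA)^\ast\subseteq A^\ast B^\ast$ directly, using that then $B^\ast$ is bounded and everywhere defined. As $BA=\{(x,By)\mid(x,y)\in A\}$, the condition $(h_1,h_2)\in(BA)^\ast$ reads $\iprod{x}{h_2}=\iprod{By}{h_1}=\iprod{y}{B^\ast h_1}$ for all $(x,y)\in A$, i.e.\ exactly $(B^\ast h_1,h_2)\in A^\ast$; together with $(h_1,B^\ast h_1)\in B^\ast$ this is $(h_1,h_2)\in A^\ast B^\ast$. Furthermore $A^\ast B^\ast$ is already closed, being the composition of the closed relation $A^\ast$ with the bounded everywhere-defined $B^\ast$ on the right, so that no closure is needed; the hypothesis on $A$ enters only to decide whether $A^{\ast_\nu}$ is an operator or merely a closed subspace.

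The genuinely delicate statement is the third one ($A$ bounded and everywhere defined, $B$ closed), and I expect it to be the main obstacle: the naive chase breaks down because the connecting variable need not converge, and $A^{\ast_\nu}B^{\ast_\nu}$ is in general not closed, which is exactly why a closure appears in the claim. I would bypass this by duality. Both $(BA)^{\ast_\nu}$ and $\overline{A^{\ast_\nu}B^{\ast_\nu}}$ are closed, so by the fact that two applications of the adjoint return the closure (\Cref{remSecondAdjointBIT}) it suffices to show that they have the same $(-\nu)$-adjoint. On the one hand $((BA)^{\ast_\nu})^{\ast_{-\nu}}=\overline{BA}=BA$, since the composition of the bounded everywhere-defined inner operator $A$ with the closed outer relation $B$ is again closed. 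On the other hand $(\overline{A^{\ast_\nu}B^{\ast_\nu}})^{\ast_{-\nu}}=(A^{\ast_\nu}B^{\ast_\nu})^{\ast_{-\nu}}$, and since here the left factor $A^{\ast_\nu}$ is bounded and everywhere defined while the right factor $B^{\ast_\nu}$ is a closed subspace, the already established fourth assertion (applied with $-\nu$ in place of $\nu$) gives $(A^{\ast_\nu}B^{\ast_\nu})^{\ast_{-\nu}}=(B^{\ast_\nu})^{\ast_{-\nu}}(A^{\ast_\nu})^{\ast_{-\nu}}=\overline{B}\,\overline{A}=BA$. Thus the two closed relations agree after one more application of $\ast_\nu$, whence $(BA)^{\ast_\nu}=\overline{A^{\ast_\nu}B^{\ast_\nu}}$.
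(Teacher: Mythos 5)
Your proposal is correct and follows essentially the same route as the paper: reduce to the ordinary adjoint via \eqref{eqRemAdjointBIT1}, obtain the inclusions and the bounded-$B$ equality by the defining testing identity, and settle the closure statement $(BA)^{\ast_\nu}=\overline{A^{\ast_\nu}B^{\ast_\nu}}$ by the biduality trick $(X^{\ast_\nu})^{\ast_{-\nu}}=\overline{X}$ together with the closedness of $BA$. The only (harmless) structural difference is that you prove the bounded-$B$ case first for arbitrary relations $A$ and deduce the closed-$B$ case from it by adjoining, whereas the paper establishes the identity $BA=(A^{\ast_\nu}B^{\ast_\nu})^{\ast_{-\nu}}$ for the closed-$B$ case directly and then feeds it into one sub-case of the bounded-$B$ statement.
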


\begin{proof}
The first two statements immediately follow from~\labelcref{eqRemAdjointBIT1} and the $\nu$-adjoint being closed. For the third statement, one can readily show $BA=(A^{\ast_\nu}B^{\ast_\nu})^{\ast_{-\nu}}$
using the boundedness of $A$ and $A^{\ast_\nu}$, $(B^{\ast_\nu})^{\ast_{-\nu}}=B$ and~\labelcref{eqRemAdjointBIT1}. For a bounded operator $B$ and
a closed linear subspace $A$, we can compute $(A^{\ast_\nu}B^{\ast_\nu})^{\ast_{-\nu}}=\overline{BA}$ by virtue of the third statement. For a bounded operator $B$ and
a densely defined operator $A$, one can use the boundedness of $B$ and $B^{\ast_\nu}$, $\dom(BA)=\dom(A)$ and~\labelcref{eqRemSecondAdjointBIT1} to easily show that
$f\in \dom((BA)^{\ast_\nu})$ implies $B^{\ast_\nu} f\in\dom(A^{\ast_\nu}) \iff f\in\dom(A^{\ast_\nu}B^{\ast_\nu})$.
\end{proof}

%
\begin{lemma}\label{lemmaNuAdjOfExtendedSpatialOp}
Consider Hilbert spaces $\HH_0,\HH_1$. Let $A\colon\dom(A)\subseteq \HH_0\to \HH_1$ be densely defined and closed.
Then for $\nu\in\R$, its extension (cf.~\labelcref{eqDefinitionTensorTimeLiftingSpatialOperators}) to $\Lp[\nu]{2} (\R, \HH_{0})$ has the extension of its adjoint to $\Lp[-\nu]{2} (\R, \HH_1)$ as its $\nu$-adjoint, i.e., $(A_{\Lp[\nu]{2}})^{\ast_\nu}=(A^\ast)_{\Lp[-\nu]{2}}$.
\end{lemma}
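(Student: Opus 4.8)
The plan is to reduce the $\nu$-adjoint to the ordinary $\Lp[\nu]{2}$-adjoint, whose behavior under lifting is already available. Applying the conjugation identity~\labelcref{eqRemAdjointBIT1} to the relation $A_{\Lp[\nu]{2}}$ gives
\begin{equation*}
(A_{\Lp[\nu]{2}})^{\ast_\nu}=(\exp(-2\nu\multm)\times\exp(-2\nu\multm))\bigl((A_{\Lp[\nu]{2}})^{\ast}\bigr),
\end{equation*}
where $(A_{\Lp[\nu]{2}})^{\ast}$ is the adjoint with respect to the $\Lp[\nu]{2}$-inner product. Since $A$ is densely defined and closed, the lifting property $(A_{\Lp[\nu]{2}})^{\ast}=(A^\ast)_{\Lp[\nu]{2}}$ recorded around~\labelcref{eqDefinitionTensorTimeLiftingSpatialOperators} applies, so the whole statement reduces to proving
\begin{equation*}
(\exp(-2\nu\multm)\times\exp(-2\nu\multm))\bigl((A^\ast)_{\Lp[\nu]{2}}\bigr)=(A^\ast)_{\Lp[-\nu]{2}}.
\end{equation*}

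The decisive point is that $\exp(-2\nu\multm)$ is multiplication by the scalar $\euler^{-2\nu t}$ acting pointwise in time, and hence commutes with the pointwise application of the spatial operator $A^\ast$: for any $g$ in the domain of $(A^\ast)_{\Lp[\nu]{2}}$ and almost every $t$ one has $\euler^{-2\nu t}A^\ast(g(t))=A^\ast(\euler^{-2\nu t}g(t))$. Writing $\tilde g\coloneqq\exp(-2\nu\multm)g$, this identity says exactly that the image of the pair $(g,(A^\ast)_{\Lp[\nu]{2}}g)$ under the conjugation equals $(\tilde g,(A^\ast)_{\Lp[-\nu]{2}}\tilde g)$. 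Thus, once the two domains are matched up under $\exp(-2\nu\multm)$, the graphs of the two sides coincide and the claim follows; note that $A_{\Lp[\nu]{2}}$ is densely defined, so by \Cref{remSecondAdjointBIT} its $\nu$-adjoint is genuinely an operator, consistent with the right-hand side.

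The only step requiring genuine care is the domain bookkeeping, i.e.\ that $\exp(-2\nu\multm)$ restricts to a bijection from $\dom((A^\ast)_{\Lp[\nu]{2}})=\Lp[\nu]{2}(\R,\dom(A^\ast))$ onto $\dom((A^\ast)_{\Lp[-\nu]{2}})=\Lp[-\nu]{2}(\R,\dom(A^\ast))$. Here $\dom(A^\ast)$ carries its graph norm, and the weight picked up when passing from $\Lp[\nu]{2}$ to $\Lp[-\nu]{2}$ precisely cancels the factor $\euler^{-2\nu t}$: one computes $\euler^{\nu t}\norm{\tilde g(t)}_{\dom(A^\ast)}=\euler^{\nu t}\euler^{-2\nu t}\norm{g(t)}_{\dom(A^\ast)}=\euler^{-\nu t}\norm{g(t)}_{\dom(A^\ast)}$, so that $\tilde g\in\Lp[-\nu]{2}(\R,\dom(A^\ast))$ if and only if $g\in\Lp[\nu]{2}(\R,\dom(A^\ast))$. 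With the domains thus identified and the action identified pointwise, the two graphs are equal. An alternative would be to unwind the defining relation~\labelcref{eqRemAdjointBIT2} by hand, but the conjugation route keeps the pointwise-in-time structure of the lifting transparent and isolates the sole nontrivial ingredient as the elementary weight cancellation above.
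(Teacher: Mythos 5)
Your proposal is correct and follows essentially the same route as the paper: apply the conjugation identity~\labelcref{eqRemAdjointBIT1}, use $(A_{\Lp[\nu]{2}})^{\ast}=(A^\ast)_{\Lp[\nu]{2}}$, and then observe that $\exp(-2\nu\multm)$ carries $\Lp[\nu]{2}(\R,\dom(A^\ast))$ onto $\Lp[-\nu]{2}(\R,\dom(A^\ast))$ while commuting with the pointwise action of $A^\ast$. The only difference is that you spell out the weight cancellation and the pointwise commutation explicitly, which the paper leaves implicit.
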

\begin{proof}
From~\Cref{remAdjointBIT} and $(A_{\Lp[\nu]{2}})^{\ast}=(A^\ast)_{\Lp[\nu]{2}}$, we can infer
\begin{equation}
\begin{split}
(A_{\Lp[\nu]{2}})^{\ast_\nu}&=(\exp(-2\nu\multm)\times \exp(-2\nu\multm))((A_{\Lp[\nu]{2}})^\ast)\\
&=(\exp(-2\nu\multm)\times \exp(-2\nu\multm))((A^\ast)_{\Lp[\nu]{2}})\\
&=(A^\ast)_{\Lp[-\nu]{2}}\text{.}
\end{split}\label{eqBITAdjSAdj}
\end{equation}
The last equality is a consequence of $\exp(-2\nu\multm)(\Lp[\nu]{2}(\R,\dom(A^\ast)))=\Lp[-\nu]{2}(\R,\dom(A^\ast))$.
\end{proof}
\subsection{The \texorpdfstring{$\nu$}{nu}-Adjoint of Material Laws}
 We first want to compute the $\nu$-adjoint of $\partial_{t,\nu}M(\partial_{t,\nu})=\LL^\ast_\nu (\iu\multm +\nu)M(\iu\multm+\nu)\LL_\nu$ in~\labelcref{eqSecDefPropNuProd1}.
\begin{lemma}\label{lemmaBITAdjViaR}
  For $\nu\in\R$, a Hilbert space $\HH$ and a linear operator $R\subseteq \Lp[\nu]{2} (\R,\HH)\times \Lp[\nu]{2} (\R,\HH)$, we have
  \begin{equation*}
R^{\ast_\nu}=\LL^\ast_{-\nu}(\LL_\nu R\LL^\ast_\nu)^\ast\LL_{-\nu}\text{.}
\end{equation*}
\end{lemma}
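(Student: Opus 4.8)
The plan is to reduce the claim to the description of the $\nu$-adjoint through the ordinary Hilbert space adjoint provided in \Cref{remAdjointBIT}, and then to conjugate that adjoint through the unitary Fourier--Laplace transform. Concretely, by the first line of~\labelcref{eqRemAdjointBIT1} we have
\begin{equation*}
R^{\ast_\nu}=(\exp(-2\nu\multm)\times\exp(-2\nu\multm))(R^\ast),
\end{equation*}
where $R^\ast\subseteq\Lp[\nu]{2}(\R,\HH)\times\Lp[\nu]{2}(\R,\HH)$ denotes the usual $\Lp[\nu]{2}$-adjoint. Read as a relation on $\Lp[-\nu]{2}(\R,\HH)$, the right-hand side is the conjugation $\exp(-2\nu\multm)\,R^\ast\,\exp(2\nu\multm)$, where $\exp(2\nu\multm)\colon\Lp[-\nu]{2}(\R,\HH)\to\Lp[\nu]{2}(\R,\HH)$ and $\exp(-2\nu\multm)\colon\Lp[\nu]{2}(\R,\HH)\to\Lp[-\nu]{2}(\R,\HH)$ are unitary.

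First I would rewrite the $\Lp[\nu]{2}$-adjoint $R^\ast$ in terms of the $\Lp{2}$-adjoint of the transformed relation $\LL_\nu R\LL^\ast_\nu\subseteq\Lp{2}(\R,\HH)\times\Lp{2}(\R,\HH)$. Since $\LL_\nu\colon\Lp[\nu]{2}(\R,\HH)\to\Lp{2}(\R,\HH)$ is unitary and taking adjoints of linear relations is covariant under unitary conjugation, one has $(\LL_\nu R\LL^\ast_\nu)^\ast=\LL_\nu R^\ast\LL^\ast_\nu$; this can be verified directly from the defining bilinear relation of the relation-adjoint by substituting $\LL_\nu x$, $\LL_\nu y$ for the entries and using $\iprod{\LL_\nu\,\cdot}{\LL_\nu\,\cdot}=\iprod{\cdot}{\cdot}$. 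Solving for $R^\ast$ yields $R^\ast=\LL^\ast_\nu(\LL_\nu R\LL^\ast_\nu)^\ast\LL_\nu$.

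Substituting this into the conjugation above gives
\begin{equation*}
R^{\ast_\nu}=\exp(-2\nu\multm)\LL^\ast_\nu\,(\LL_\nu R\LL^\ast_\nu)^\ast\,\LL_\nu\exp(2\nu\multm),
\end{equation*}
so it remains to identify the outer factors. Writing $\LL_\nu=\FF\exp(-\nu\multm)$ and inverting, a direct computation on a test function $g\in\Lp{2}(\R,\HH)$ shows $\exp(-2\nu\multm)\LL^\ast_\nu g=\euler^{-\nu\cdot}\FF^\ast g=\LL^\ast_{-\nu}g$, and likewise $\LL_\nu\exp(2\nu\multm)f=\FF(\euler^{\nu\cdot}f)=\LL_{-\nu}f$ for $f\in\Lp[-\nu]{2}(\R,\HH)$; that is, the weights of opposite sign are intertwined by the Fourier--Laplace transform. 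Inserting these two identities turns the display into $R^{\ast_\nu}=\LL^\ast_{-\nu}(\LL_\nu R\LL^\ast_\nu)^\ast\LL_{-\nu}$, which is the assertion.

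I expect the only genuinely delicate point to be the covariance identity $(\LL_\nu R\LL^\ast_\nu)^\ast=\LL_\nu R^\ast\LL^\ast_\nu$ at the level of (possibly unbounded, non-closed) linear relations rather than bounded operators; everything else is the bookkeeping of the two intertwining relations between $\LL_\nu$ and $\LL_{-\nu}$, which is routine.
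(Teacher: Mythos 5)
Your argument is correct. It reaches the same conclusion by a somewhat different organization than the paper: the paper's proof never invokes the identity $R^{\ast_\nu}=(\exp(-2\nu\multm)\times\exp(-2\nu\multm))(R^{\ast})$ from \Cref{remAdjointBIT}; instead it verifies the defining pairing condition of the $\nu$-adjoint element-wise, inserting $\LL_\nu^\ast\LL_\nu$ and using $\dprod[\nu]{\LL^\ast_\nu h_1}{h_2}=\iprod[\Lp{2}]{h_1}{\LL_{-\nu}h_2}$ to translate the condition into membership in $(\LL_\nu R\LL^\ast_\nu)^\ast$. You instead compose three structural facts: the $\exp(-2\nu\multm)$-description of the $\nu$-adjoint, the covariance $(\LL_\nu R\LL^\ast_\nu)^\ast=\LL_\nu R^\ast\LL^\ast_\nu$ of the relation-adjoint under unitary conjugation, and the intertwining $\LL_{-\nu}=\LL_\nu\exp(2\nu\multm)$ together with its adjoint form $\LL^\ast_{-\nu}=\exp(-2\nu\multm)\LL^\ast_\nu$. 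The last of these is exactly the computational heart of the paper's proof as well, so the two arguments share their key identity. What your packaging buys is that it works verbatim for arbitrary linear relations (the paper's proof, going through~\labelcref{eqRemAdjointBIT2}, is phrased for operators), and it isolates the one genuinely relation-theoretic point, the unitary covariance of the adjoint, which you correctly flag as the step needing care; your sketch of its verification via substitution in the defining bilinear condition is sound and does not require closedness or dense definedness. The paper's version is marginally more self-contained in that it does not presuppose the covariance lemma.
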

\begin{proof}
As $\LL^\ast_\nu=\LL^{-1}_\nu$ holds, we have $(g_1,g_2)\in R^{\ast_\nu}$ if and only if
\begin{equation}
\dprod[\nu]{\LL^\ast_\nu\LL_\nu f}{g_2}=\dprod[\nu]{f}{g_2}=\dprod[\nu]{Rf}{g_1}
=\dprod[\nu]{\LL^\ast_\nu\LL_\nu R\LL^\ast_\nu\LL_\nu f}{g_1}
\label{eqAdjMatLaw1}
\end{equation}
for all $f\in\dom (R)$. For any $h_1\in \Lp{2} (\R,\HH)$ and $h_2\in \Lp[-\nu]{2} (\R,\HH)$, we have
\begin{equation*}
\begin{split}
\dprod[\nu]{\LL^\ast_\nu h_1}{h_2}&=\iprod[{\Lp[\nu]{2}}]{\LL^\ast_\nu h_1}{\exp(2\nu\multm)h_2}\\
&=\iprod[\Lp{2}]{h_1}{\LL_\nu\exp(2\nu\multm)h_2}\\
&=\iprod[\Lp{2}]{h_1}{\LL_{-\nu}h_2}\text{.}
\end{split}
\end{equation*}
The last identity $\LL_{-\nu}=\LL_\nu\exp(2\nu\multm)$ follows from
\begin{equation*}
\LL_{-\nu}=\FF\exp(\nu\multm)=\FF\exp(-\nu\multm)\exp(2\nu\multm)=\LL_\nu\exp(2\nu\multm)\text{.}
\end{equation*}
Hence,~\labelcref{eqAdjMatLaw1} turns into
\begin{equation}
\iprod[\Lp{2}]{h}{\LL_{-\nu}g_2}=\iprod[\Lp{2}]{\LL_\nu R\LL^\ast_\nu h}{\LL_{-\nu}g_1}\label{eqAdjMatLaw2}
\end{equation}
for all $h\in \LL_\nu\dom(R)=\dom(\LL_\nu R\LL^\ast_\nu)$. By definition,~\labelcref{eqAdjMatLaw2}
is true if and only if $(\LL_{-\nu}g_1,\LL_{-\nu}g_2)\in (\LL_\nu R\LL^\ast_\nu)^\ast$ holds.
\end{proof}
\begin{definition}
  For a Hilbert space $\HH$ and a material law $M\colon\dom(M)\subseteq \C\to \Lb (\HH)$, we define
\begin{equation*}
M^\ast\colon
\begin{cases}
\hfill \dom(M)\subseteq\C&\to\quad \Lb(\HH)\\
\hfill z&\mapsto\quad M(z)^\ast. 
\end{cases}\qedhere
\end{equation*}
\end{definition}
\begin{remark}\label{remDualMatLawProperties}
  In a certain way, $M^{\ast}$ defines a \enquote{dual} material law with $\abscb(M^{\ast})=\abscb(M)$
as well as $\norm[\infty,\C_{\Real>\nu}]{M}=\norm[\infty,\C_{\Real>\nu}]{M^{\ast}}$ for all $\nu>\abscb(M)$. Note that in general $M^\ast$ will not be holomorphic.
Moreover, we immediately get $(M(\iu\multm+\nu))^{\ast}=M^\ast (\iu\multm+\nu)$ for such $\nu$. Thus, \Cref{lemmaBITAdjViaR} yields
\begin{equation*}
M(\partial_{t,\nu})^{\ast_\nu}=\LL^\ast_{-\nu}(M(\iu\multm+\nu))^\ast\LL_{-\nu}=\LL^\ast_{-\nu}M^\ast(\iu\multm+\nu)\LL_{-\nu}\text{.} \qedhere
\end{equation*}
\end{remark}
\begin{lemma}\label{lemmaAdjTimeDerivMatLaw}
  Consider a Hilbert space $\HH$, a material law $M\colon\dom(M)\subseteq \C\to \Lb (\HH)$ and $\nu>\max(\abscb (M),0)$.
  Then,
  \begin{equation*}
(\partial_{t,\nu}M(\partial_{t,\nu}))^{\ast_\nu}=-\partial_{t,-\nu}\LL^\ast_{-\nu}M^\ast(\iu\multm+\nu)\LL_{-\nu}
  \end{equation*}
  holds.
\end{lemma}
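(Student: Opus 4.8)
The plan is to turn the computation of the $\nu$-adjoint into an ordinary Hilbert space adjoint on $\Lp{2}(\R,\HH)$ by means of \Cref{lemmaBITAdjViaR}, and then to identify the resulting operator as a multiplication operator whose adjoint can be read off pointwise. Concretely, set $R\coloneqq \partial_{t,\nu}M(\partial_{t,\nu})$. \Cref{lemmaBITAdjViaR} gives $R^{\ast_\nu}=\LL^\ast_{-\nu}(\LL_\nu R\LL^\ast_\nu)^\ast\LL_{-\nu}$, so the first step is to evaluate $\LL_\nu R\LL^\ast_\nu$. Using $\LL_\nu\LL^\ast_\nu=I$ together with the representation \labelcref{eqSecDefPropNuProd1}, this conjugation telescopes to $\LL_\nu R\LL^\ast_\nu=(\iu\multm+\nu)M(\iu\multm+\nu)$ on $\Lp{2}(\R,\HH)$, i.e.\ the closed, densely defined scalar multiplication $\iu\multm+\nu$ composed after the bounded material law operator $M(\iu\multm+\nu)$ (which is bounded since $\nu>\abscb(M)$).

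The heart of the argument is the identity
\begin{equation*}
\bigl((\iu\multm+\nu)M(\iu\multm+\nu)\bigr)^\ast=(\nu-\iu\multm)M^\ast(\iu\multm+\nu).
\end{equation*}
Since the scalar factor $\iu t+\nu$ commutes with $M(\iu t+\nu)$, the operator on the left is precisely the maximal multiplication operator with symbol $t\mapsto(\iu t+\nu)M(\iu t+\nu)$; indeed its natural domain $\{f\mid (t\mapsto(\iu t+\nu)M(\iu t+\nu)f(t))\in\Lp{2}\}$ coincides with $\{f\mid M(\iu\multm+\nu)f\in\dom(\iu\multm)\}$, the domain of the composition. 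I would then use that the adjoint of a maximal multiplication operator is the maximal multiplication operator of the pointwise adjoint symbol, which here is $t\mapsto\overline{(\iu t+\nu)}\,M(\iu t+\nu)^\ast=(\nu-\iu t)M^\ast(\iu t+\nu)$, invoking $\overline{\iu t+\nu}=\nu-\iu t$ and $(M(\iu\multm+\nu))^\ast=M^\ast(\iu\multm+\nu)$ from \Cref{remDualMatLawProperties}.

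Finally I would substitute this back and rewrite the scalar factor as a time derivative. By \Cref{lemmaWeakTimeDerProperties} applied with weight $-\nu$ one has $\partial_{t,-\nu}=\LL^\ast_{-\nu}(\iu\multm-\nu)\LL_{-\nu}$, hence $-\partial_{t,-\nu}=\LL^\ast_{-\nu}(\nu-\iu\multm)\LL_{-\nu}$. Inserting $\LL_{-\nu}\LL^\ast_{-\nu}=I$ then yields
\begin{equation*}
R^{\ast_\nu}=\LL^\ast_{-\nu}(\nu-\iu\multm)M^\ast(\iu\multm+\nu)\LL_{-\nu}=-\partial_{t,-\nu}\LL^\ast_{-\nu}M^\ast(\iu\multm+\nu)\LL_{-\nu},
\end{equation*}
which is the claimed formula.

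I expect the main obstacle to be the displayed adjoint identity for the unbounded multiplication operator: because $\iu\multm$ is unbounded and everything is $\HH$-valued, one must argue at the level of domains rather than merely transpose the factors formally. The inclusion $\supseteq$ is the routine pointwise computation $\iprod{\Psi f}{g}=\iprod{f}{\Psi^\ast g}$ integrated over $\R$; for the reverse inclusion I would test against functions supported on sets where $\abs{\iu t+\nu}$ stays bounded and exploit the boundedness of $M(\iu\multm+\nu)$, thereby reducing the unbounded part to the scalar multiplication operator whose adjoint $(\iu\multm+\nu)^\ast=\nu-\iu\multm$ is classical. Everything else — the telescoping conjugations and the final substitution — is bookkeeping with the unitarity of $\LL_{\pm\nu}$.
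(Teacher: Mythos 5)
Your proof is correct, but it reaches the key identity by a genuinely different route than the paper. Both arguments start identically, by conjugating with \Cref{lemmaBITAdjViaR} to reduce everything to computing $\bigl((\iu\multm+\nu)M(\iu\multm+\nu)\bigr)^\ast$ on $\Lp{2}(\R,\HH)$. The paper then treats this as a composition of a closed operator with a bounded one, writes its adjoint as the closure $\overline{M^\ast(\iu\multm+\nu)(-\iu\multm+\nu)}$, and must afterwards prove (via the Yosida-type approximation $g_\varepsilon=(1-\varepsilon\partial_{t,-\nu})^{-1}g$) that this closure of the \enquote{wrong-order} product coincides with the closed \enquote{right-order} product $(-\iu\multm+\nu)M^\ast(\iu\multm+\nu)$; that resolvent-commutation identity \labelcref{eqMatLawTDCommClos} is the technical heart of their proof. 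You instead observe that the composition is already the \emph{maximal} multiplication operator with operator-valued symbol $t\mapsto(\iu t+\nu)M(\iu t+\nu)$ and compute its adjoint at the symbol level, which builds the commutation of the scalar factor with $M^\ast$ into the symbol before adjoining and so avoids the closure question entirely. Your localization argument for the reverse inclusion (testing against $f$ supported in $[-R,R]$, where the symbol is bounded, to conclude $N(\cdot)^\ast g(\cdot)=h(\cdot)$ a.e.\ and hence in $\Lp{2}$) is exactly what is needed and does work here; measurability of $t\mapsto M(\iu t+\nu)^\ast g(t)$ is unproblematic since $M$ is norm-continuous on the line $\Real z=\nu$, consistent with \Cref{remDualMatLawProperties}. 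The trade-off is that the paper's approximation technique is reused later (e.g.\ to show $M(\partial_{t,\nu})\partial_{t,\nu}\subseteq\partial_{t,\nu}M(\partial_{t,\nu})$ in \Cref{thmBITNuAdjSys}), whereas your argument is more self-contained and arguably more transparent for this particular lemma, at the cost of having to establish the folklore fact about adjoints of operator-valued maximal multiplication operators, which you correctly identify as the main obstacle.
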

\begin{proof}
  Boundedness of $M^\ast (\iu\multm+\nu)$ shows $M^\ast (\iu\multm+\nu)(\iu\multm -\nu)\subseteq (\iu\multm -\nu)M^\ast (\iu\multm+\nu)$ with
$\dom (M^\ast (\iu\multm+\nu)(\iu\multm -\nu))=
\dom ((\iu\multm -\nu))$. Consider the closed operator
\begin{equation*}
\LL^\ast_{-\nu}(\iu\multm -\nu)M^\ast(\iu\multm+\nu)\LL_{-\nu}= \partial_{t,-\nu}\LL^\ast_{-\nu}M^\ast(\iu\multm+\nu)\LL_{-\nu}
\end{equation*}
and the operator
\begin{equation*}
\LL^\ast_{-\nu}M^\ast(\iu\multm+\nu)(\iu\multm -\nu)\LL_{-\nu}= \LL^\ast_{-\nu}M^\ast(\iu\multm+\nu)\LL_{-\nu}\partial_{t,-\nu}\text{.}
\end{equation*}
We then have
\begin{equation}\label{eqSubsetMatLawTD}
\LL^\ast_{-\nu}M^\ast(\iu\multm+\nu)\LL_{-\nu}\partial_{t,-\nu}\subseteq \partial_{t,-\nu}\LL^\ast_{-\nu}M^\ast(\iu\multm+\nu)\LL_{-\nu}
\end{equation}
(the domain of the left operator is $\dom (\partial_{t,-\nu})$). This implies
\begin{equation}\label{eqSubsetMatLawResTD}
\LL^\ast_{-\nu}M^\ast(\iu\multm+\nu)\LL_{-\nu}(1-\varepsilon\partial_{t,-\nu})^{-1}= (1-\varepsilon\partial_{t,-\nu})^{-1}\LL^\ast_{-\nu}M^\ast(\iu\multm+\nu)\LL_{-\nu}
\end{equation}
for $\varepsilon > 0$ since $\LL^\ast_{-\nu}M^\ast(\iu\multm+\nu)\LL_{-\nu}$ is bounded.
For an element $g\in\dom(\partial_{t,-\nu}\LL^\ast_{-\nu}M^\ast(\iu\multm+\nu)\LL_{-\nu})$ of the domain of the operator on the righthand side, we define $g_{\varepsilon}\coloneqq (1-\varepsilon\partial_{t,-\nu})^{-1}g$
for $\varepsilon > 0$ (cf.\ the Yosida-approximation approach in~\cite[proof of Lemma~16.3.3]{SeTrWa22}). Hence, we have $g_\varepsilon\xrightarrow{\varepsilon\to 0+} g$ in $\Lp[-\nu]{2} (\R,\HH)$ and
 $g_{\varepsilon}\in\dom (\partial_{t,-\nu})$ holds. Therefore,~\labelcref{eqSubsetMatLawTD} and~\labelcref{eqSubsetMatLawResTD}  yield
\begin{align*}
\LL^\ast_{-\nu}M^\ast(\iu\multm+\nu)\LL_{-\nu}\partial_{t,-\nu}g_\varepsilon&=\partial_{t,-\nu}\LL^\ast_{-\nu}M^\ast(\iu\multm+\nu)\LL_{-\nu}g_\varepsilon\\
&=\partial_{t,-\nu}\LL^\ast_{-\nu}M^\ast(\iu\multm+\nu)\LL_{-\nu} (1-\varepsilon\partial_{t,-\nu})^{-1}g\\
&=\partial_{t,-\nu}(1-\varepsilon\partial_{t,-\nu})^{-1}\LL^\ast_{-\nu}M^\ast(\iu\multm+\nu)\LL_{-\nu}g\text{.}
\intertext{As an (unbounded) operator and its resolvent commute on the domain of the operator and as
$g\in\dom(\partial_{t,-\nu}\LL^\ast_{-\nu}M^\ast(\iu\multm+\nu)\LL_{-\nu})$ implies $\LL^\ast_{-\nu}M^\ast(\iu\multm+\nu)\LL_{-\nu}g\in\dom (\partial_{t,-\nu})$, we conclude}
&=(1-\varepsilon\partial_{t,-\nu})^{-1}\partial_{t,-\nu}\LL^\ast_{-\nu}M^\ast(\iu\multm+\nu)\LL_{-\nu}g\\
&\xrightarrow{\varepsilon\to 0+}\partial_{t,-\nu}\LL^\ast_{-\nu}M^\ast(\iu\multm+\nu)\LL_{-\nu}g
\end{align*}
in $\Lp[-\nu]{2} (\R,\HH)$, i.e., we have shown
\begin{equation}\label{eqMatLawTDCommClos}
\overline{\LL^\ast_{-\nu}M^\ast(\iu\multm+\nu)\LL_{-\nu}\partial_{t,-\nu}}= \partial_{t,-\nu}\LL^\ast_{-\nu}M^\ast(\iu\multm+\nu)\LL_{-\nu}\text{.}
\end{equation}
Applying \Cref{lemmaBITAdjViaR} to $\partial_{t,\nu}M(\partial_{t,\nu})$ and using~\labelcref{eqMatLawTDCommClos}, we infer
\begin{equation*}
\begin{split}
(\partial_{t,\nu}M(\partial_{t,\nu}))^{\ast_\nu}&=\LL^\ast_{-\nu}((\iu\multm +\nu)M(\iu\multm+\nu))^\ast\LL_{-\nu}\\
&=\LL^\ast_{-\nu}\overline{M^\ast(\iu\multm+\nu)(-\iu\multm +\nu)}\LL_{-\nu}\\
&=\overline{-\LL^\ast_{-\nu}M^\ast(\iu\multm+\nu)(\iu\multm -\nu)\LL_{-\nu}}\\
&=-\partial_{t,-\nu}\LL^\ast_{-\nu}M^\ast(\iu\multm+\nu)\LL_{-\nu}\text{.} \qedhere
\end{split}
\end{equation*}
\end{proof}
\begin{remark}\label{remNuAdjOfTimeDeriv}
If we take the constant identity as the material law, \Cref{lemmaAdjTimeDerivMatLaw} reads $(\partial_{t,\nu})^{\ast_\nu}=-\partial_{t,-\nu}$ for any Hilbert space $\HH$ and $\nu>0$, see also~\cite[Remark~2.9]{KPSTW14}.
\end{remark}
\subsection{The \texorpdfstring{$\nu$}{nu}-Adjoint System}
We are now able to compute the $\nu$-adjoint of our operator~\labelcref{eqSecDefPropNuProd1}.
\begin{remark}
Applying \Cref{lemmaNuAdjOfExtendedSpatialOp} to the skew-selfadjoint $A$ in~\labelcref{eqSecDefPropNuProd1}, we have $(A_{\Lp[\nu]{2}})^{\ast_\nu}=-(A)_{\Lp[-\nu]{2}}$.
Abusing notation, we will write $A^{\ast_\nu}=-A$ and we will denominate the domain of $(A_{\Lp[\nu]{2}})^{\ast_\nu}$ as $\dom(A^{\ast_\nu})=\dom(A)$.
Combining this and \Cref{lemmaAdjTimeDerivMatLaw} via \Cref{lemmaSumProdOfNuAdjRel}, we conclude
\begin{equation}
-\partial_{t,-\nu}\LL^\ast_{-\nu}M^\ast(\iu\multm+\nu)\LL_{-\nu}-A\subseteq (\partial_{t,\nu}M(\partial_{t,\nu})+A)^{\ast_\nu}\text{,}
~\label{eqBITSystemLeq}
\end{equation}
and the domain of the operator on the left side is (cf.~\labelcref{eqSubsetMatLawTD})
\begin{equation}\label{eqSubsetDomainEvEqNuAdj}
\dom (\partial_{t,-\nu}\LL^\ast_{-\nu}M^\ast(\iu\multm+\nu)\LL_{-\nu})\cap \dom (A)\supseteq \dom(\partial_{t,-\nu})\cap \dom (A)\text{.} \qedhere
\end{equation}
\end{remark}
Unfortunately, the spatial operator $A$ usually does not render the operator $-\partial_{t,-\nu}\LL^\ast_{-\nu}M^\ast(\iu\multm+\nu)\LL_{-\nu}-A$ closed. This
is for the same reason that already appeared in \Cref{thmPicard}. In other words, the best improvement possible for~\labelcref{eqBITSystemLeq}
is the following result based on~\cite[proof of Lemma~16.3.4]{SeTrWa22}.
\begin{theorem}\label{thmBITNuAdjSys}
Consider a Hilbert space $\HH$, a material law $M\colon\dom(M)\subseteq \C\to \Lb (\HH)$ with $\nu>\max(\abscb (M),0)$, as well as a skew-selfadjoint $A\colon\dom(A)\subseteq \HH\to \HH$.
Then, we have
\begin{equation}\label{eqBITSystemEqual}
\overline{-\partial_{t,-\nu}\LL^\ast_{-\nu}M^\ast(\iu\multm+\nu)\LL_{-\nu}-A}= (\partial_{t,\nu}M(\partial_{t,\nu})+A)^{\ast_\nu}\text{,}
\end{equation}
and~\labelcref{eqBITSystemEqual} is a linear operator.
\end{theorem}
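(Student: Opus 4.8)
The plan is to prove the two inclusions in \labelcref{eqBITSystemEqual} separately. Throughout I write $L \coloneqq \partial_{t,\nu}M(\partial_{t,\nu})+A$ and $C \coloneqq \LL^\ast_{-\nu}M^\ast(\iu\multm+\nu)\LL_{-\nu}\in\Lb(\Lp[-\nu]{2}(\R,\HH))$, so that $(\partial_{t,\nu}M(\partial_{t,\nu}))^{\ast_\nu}=-\partial_{t,-\nu}C$ by \Cref{lemmaAdjTimeDerivMatLaw} and $A^{\ast_\nu}=-A$ by the preceding remark. The inclusion ``$\subseteq$'' is the easy half: it is exactly \labelcref{eqBITSystemLeq} combined with the fact that the $\nu$-adjoint $L^{\ast_\nu}$ is closed (\Cref{remSecondAdjointBIT}), so that closing the left-hand side preserves the inclusion; equivalently it is the sum statement of \Cref{lemmaSumProdOfNuAdjRel}. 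Linearity of \labelcref{eqBITSystemEqual} also comes from \Cref{remSecondAdjointBIT}: $L^{\ast_\nu}$ is a linear operator once $L$ is densely defined, and $\dom(L)\supseteq\dom(\partial_{t,\nu})\cap\Lp[\nu]{2}(\R,\dom(A))$ contains all finite sums $\varphi\otimes x$ with $\varphi\in\dom(\partial_{t,\nu})$ and $x\in\dom(A)$, which are dense.

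For the reverse inclusion ``$\supseteq$'' I would fix $(g,h)\in L^{\ast_\nu}$ and regularize in time. For $\varepsilon>0$ the number $\varepsilon^{-1}$ lies in the resolvent set of $\partial_{t,-\nu}$, whose spectrum is the line $\Real=-\nu$ by \Cref{lemmaWeakTimeDerProperties}; hence $R_\varepsilon\coloneqq(1-\varepsilon\partial_{t,-\nu})^{-1}\in\Lb(\Lp[-\nu]{2}(\R,\HH))$ is well defined with range $\dom(\partial_{t,-\nu})$, and $R_\varepsilon\to 1$ strongly as $\varepsilon\to 0+$ (on the Fourier--Laplace side it is multiplication by $(1+\varepsilon\nu-\iu\varepsilon\multm)^{-1}$, which is dominated by $1$ and converges pointwise to $1$, using $\nu>0$). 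Set $g_\varepsilon\coloneqq R_\varepsilon g$. Because $R_\varepsilon$ is the $\nu$-adjoint of the bounded operator $(1+\varepsilon\partial_{t,\nu})^{-1}$ (here I use $(\partial_{t,\nu})^{\ast_\nu}=-\partial_{t,-\nu}$ from \Cref{remNuAdjOfTimeDeriv}) and the latter commutes with $\partial_{t,\nu}M(\partial_{t,\nu})$ and with $A$ since it acts only in time, a short computation using \labelcref{eqRemSecondAdjointBIT1} gives $\dprod[\nu]{Lf}{g_\varepsilon}=\dprod[\nu]{f}{R_\varepsilon h}$ for all $f\in\dom(L)$, i.e.\ $(g_\varepsilon,R_\varepsilon h)\in L^{\ast_\nu}$. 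Thus $g_\varepsilon\to g$ and $L^{\ast_\nu}g_\varepsilon=R_\varepsilon h\to h$.

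It remains to identify $L^{\ast_\nu}g_\varepsilon$ with $-\partial_{t,-\nu}Cg_\varepsilon-Ag_\varepsilon$, since then $(g_\varepsilon,L^{\ast_\nu}g_\varepsilon)\to(g,h)$ exhibits $(g,h)$ as an element of $\overline{-\partial_{t,-\nu}C-A}$. First, $g_\varepsilon\in\dom(\partial_{t,-\nu})$ together with $C\partial_{t,-\nu}\subseteq\partial_{t,-\nu}C$ from \labelcref{eqSubsetMatLawTD} yields $Cg_\varepsilon\in\dom(\partial_{t,-\nu})$, so $g_\varepsilon\in\dom((\partial_{t,\nu}M(\partial_{t,\nu}))^{\ast_\nu})$. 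Peeling this contribution off the relation $\dprod[\nu]{Lf}{g_\varepsilon}=\dprod[\nu]{f}{L^{\ast_\nu}g_\varepsilon}$ via \labelcref{eqRemSecondAdjointBIT1} reduces it to $\dprod[\nu]{Af}{g_\varepsilon}=\dprod[\nu]{f}{L^{\ast_\nu}g_\varepsilon+\partial_{t,-\nu}Cg_\varepsilon}$ for all $f\in\dom(L)$.

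The main obstacle lies precisely here: this identity holds a priori only for $f\in\dom(L)=\dom(\partial_{t,\nu}M(\partial_{t,\nu}))\cap\dom(A)$, whereas concluding $g_\varepsilon\in\dom(A^{\ast_\nu})$ through \labelcref{eqRemAdjointBIT2} requires it for every $f\in\dom(A)$. The time-only regularization $R_\varepsilon$ never improves the spatial regularity of $g$, so spatial membership cannot be read off directly and must be forced out of the adjoint relation after the temporal part has been split off. The resolution I would use is that $\dom(\partial_{t,\nu})\cap\Lp[\nu]{2}(\R,\dom(A))$ is a core for the lifted operator $A=A_{\Lp[\nu]{2}}$: for $f\in\Lp[\nu]{2}(\R,\dom(A))$ the temporal regularizations $(1+\delta\partial_{t,\nu})^{-1}f$ lie in $\dom(\partial_{t,\nu})$, commute with $A$, and converge to $f$ in the graph norm of $A$ as $\delta\to 0+$. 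Since this core is contained in $\dom(L)$ and $A$ is closed, the displayed identity extends from $\dom(L)$ to all of $\dom(A)$ by approximation, whence $g_\varepsilon\in\dom(A^{\ast_\nu})$ with $-Ag_\varepsilon=A^{\ast_\nu}g_\varepsilon=L^{\ast_\nu}g_\varepsilon+\partial_{t,-\nu}Cg_\varepsilon$. Rearranging gives $L^{\ast_\nu}g_\varepsilon=-\partial_{t,-\nu}Cg_\varepsilon-Ag_\varepsilon$, which completes the reverse inclusion and hence the proof.
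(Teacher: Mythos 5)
Your proposal is correct and follows essentially the same route as the paper: both prove the nontrivial inclusion by regularizing $g$ with the Yosida-type resolvent $g_\varepsilon=(1-\varepsilon\partial_{t,-\nu})^{-1}g$, using that $(1+\varepsilon\partial_{t,\nu})^{-1}$ commutes with $\partial_{t,\nu}M(\partial_{t,\nu})$ and (via Hille's theorem) with the lifted $A$ to show $(g_\varepsilon,(1-\varepsilon\partial_{t,-\nu})^{-1}(\cdot)^{\ast_\nu}g)$ stays in the $\nu$-adjoint, and then peeling off the temporal part and exploiting that $\dom(\partial_{t,\nu})\cap\dom(A)$ is a graph-norm core for $A$ to force $g_\varepsilon\in\dom(A^{\ast_\nu})=\dom(A)$. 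The ``main obstacle'' you isolate and its resolution are exactly the density argument in the paper's proof (its footnote on $(1+\varepsilon\partial_{t,\nu})^{-1}f\to f$ in the graph norm of $A$).
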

\begin{proof}
First, note that $(\partial_{t,\nu}M(\partial_{t,\nu})+A)^{\ast_\nu}$ is an operator since $\partial_{t,\nu}M(\partial_{t,\nu})+A$ is densely defined.

For $g \in \dom ((\partial_{t,\nu}M(\partial_{t,\nu})+A)^{\ast_\nu})$, define $g_{\varepsilon}\coloneqq (1-\varepsilon\partial_{t,-\nu})^{-1}g$
for $\varepsilon > 0$ as in \Cref{lemmaAdjTimeDerivMatLaw} again.
Applying \Cref{lemmaBITAdjViaR} to $1+\varepsilon\partial_{t,\nu}=1+\varepsilon\LL^\ast_\nu(\iu\multm +\nu)\LL_\nu$ yields\footnote{Mind that the boundedness of $1$ and
the closedness of $\partial_{t,\nu}$ allow for the sum and the adjoint to interchange.}
\begin{align*}
(1+\varepsilon\partial_{t,\nu})^{\ast_\nu} &= (1+\varepsilon\LL^\ast_\nu(\iu\multm +\nu)\LL_\nu)^{\ast_\nu}\\
&=1+\varepsilon\LL^\ast_{-\nu}(\iu\multm +\nu)^\ast\LL_{-\nu}\\
&=1-\varepsilon\LL^\ast_{-\nu}(\iu\multm -\nu)\LL_{-\nu}\\
&=1-\varepsilon\partial_{t,-\nu}\text{.}
\end{align*}
 For $f\in\dom(A)$,
the theorem of Hille for Bochner integrals and the properties of $(\partial_{t,\nu})^{-1}$ (cf.~\Cref{lemmaWeakTimeDerProperties})
imply 
\begin{equation}
(\partial_{t,\nu})^{-1}Af=\Bigl[t\mapsto\integral{-\infty}{t}{Af(s)}{s}\Bigr]=\Bigl[t\mapsto A\integral{-\infty}{t}{f(s)}{s}\Bigr]=A(\partial_{t,\nu})^{-1}f.\label{eq:NotSoRigAINt}
\end{equation}
This yields $(\partial_{t,\nu})^{-1}A\subseteq A(\partial_{t,\nu})^{-1}$ and consequently, for $\varepsilon>0$,
\begin{equation*}
(1+\varepsilon\partial_{t,\nu})^{-1}A\subseteq A(1+\varepsilon\partial_{t,\nu})^{-1}\text{.}
\end{equation*}
Arguing similarly to~\labelcref{eqSubsetMatLawTD} and~\labelcref{eqSubsetMatLawResTD},
we can show $M(\partial_{t,\nu})\partial_{t,\nu}\subseteq\partial_{t,\nu}M(\partial_{t,\nu})$, and thus we also have
$(1+\varepsilon\partial_{t,\nu})^{-1}M(\partial_{t,\nu})=M(\partial_{t,\nu})(1+\varepsilon\partial_{t,\nu})^{-1}$. Clearly, $(1-\varepsilon\partial_{t,-\nu})^{-1}\partial_{t,-\nu}\subseteq
\partial_{t,-\nu}(1-\varepsilon\partial_{t,-\nu})^{-1}$ holds. Moreover, $f\in \dom (\partial_{t,\nu}M(\partial_{t,\nu}))$
implies $M(\partial_{t,\nu})f\in \dom (\partial_{t,\nu})$. Hence, $(1+\varepsilon\partial_{t,\nu})^{-1}\partial_{t,\nu}M(\partial_{t,\nu})f=
\partial_{t,\nu}(1+\varepsilon\partial_{t,\nu})^{-1}M(\partial_{t,\nu})f$ holds for such $f$.
Therefore,
\begin{equation}
\begin{split}
\dprod[\nu]{(\partial_{t,\nu}M(\partial_{t,\nu})+A)f}{g_\varepsilon}&=\dprod[\nu]{(1+\varepsilon\partial_{t,\nu})^{-1}(\partial_{t,\nu}M(\partial_{t,\nu})+A)f}{g}\\
&=\dprod[\nu]{(\partial_{t,\nu}M(\partial_{t,\nu})+A)(1+\varepsilon\partial_{t,\nu})^{-1}f}{g}\\
&=\dprod[\nu]{f}{(1-\varepsilon\partial_{t,-\nu})^{-1}(\partial_{t,\nu}M(\partial_{t,\nu})+A)^{\ast_\nu}g}\\
\end{split}\label{eqResApproxInDom}
\end{equation}
holds true for $f\in \dom (\partial_{t,\nu}M(\partial_{t,\nu}))\cap \dom (A)$. In other words, we have shown that
$g_\varepsilon\in\dom ((\partial_{t,\nu}M(\partial_{t,\nu})+A)^{\ast_\nu})$ and
\begin{equation*}
(\partial_{t,\nu}M(\partial_{t,\nu})+A)^{\ast_\nu}g_\varepsilon=(1-\varepsilon\partial_{t,-\nu})^{-1}(\partial_{t,\nu}M(\partial_{t,\nu})+A)^{\ast_\nu}g
\xrightarrow{\varepsilon\to 0+} (\partial_{t,\nu}M(\partial_{t,\nu})+A)^{\ast_\nu}g\text{.}
\end{equation*}
Rewriting~\labelcref{eqResApproxInDom} yields
\begin{align*}
\dprod[\nu]{Af}{g_\varepsilon}&=\dprod[\nu]{f}{(1-\varepsilon\partial_{t,-\nu})^{-1}(\partial_{t,\nu}M(\partial_{t,\nu})+A)^{\ast_\nu}g}
-\dprod[\nu]{\partial_{t,\nu}M(\partial_{t,\nu})f}{g_\varepsilon}\\
&=\dprod[\nu]{f}{(1-\varepsilon\partial_{t,-\nu})^{-1}(\partial_{t,\nu}M(\partial_{t,\nu})+A)^{\ast_\nu}g}\\
&\quad +\dprod[\nu]{f}{\partial_{t,-\nu}\LL^\ast_{-\nu}M^\ast(\iu\multm+\nu)\LL_{-\nu}g_\varepsilon}
\end{align*}
for $f\in \dom (\partial_{t,\nu})\cap \dom (A)$. As the $\nu$-product is bounded and  $\dom (\partial_{t,\nu})\cap \dom (A)$ is a
dense subset of $\dom (A)$,\footnote{For $f\in \dom (A)$, we have $(1+\varepsilon\partial_{t,\nu})^{-1}f\xrightarrow{\varepsilon\to 0+} f$ and 
$A(1+\varepsilon\partial_{t,\nu})^{-1}f=(1+\varepsilon\partial_{t,\nu})^{-1}Af\xrightarrow{\varepsilon\to 0+} Af$.}
 we conclude $g_\varepsilon\in\dom (A^{\ast_\nu})=\dom (A)$. To sum up, we have proven that $g_\varepsilon$ converges
to $g$ in the domain of the $\nu$-adjoint, that the image of $g_\varepsilon$ converges to the image of $g$ and that $g_\varepsilon$ even
lies in the domain of the operator on the left side of~\labelcref{eqBITSystemLeq}.
\end{proof}
\begin{definition}
Taking \Cref{thmBITNuAdjSys} into consideration in the setting of~\labelcref{eqSecDefPropNuProd1}, we will call
\begin{equation}\label{definNuAdjSysclassEq}
(-\partial_{t,-\nu}\LL^\ast_{-\nu}M^\ast(\iu\multm+\nu)\LL_{-\nu}-A)u=f
\end{equation}
for suitable $u,f\in \Lp[-\nu]{2}(\R,\HH)$ (cf.\ \Cref{thmMainWellPosednessNuAdj}) the \emph{$\nu$-adjoint system} corresponding to the spatial operator $A$ and the material law $M$.
\end{definition}
\begin{remark}
In the special case
\begin{equation}
M(z)\coloneqq\sum_{k=0}^n z^{-k}M_k\label{eqMatLawFinSumOfBdOp}
\end{equation}
for $n\in\N\uplus\{0\}$, $z\in\C\setminus\{0\}$ and $M_0,M_1,\dots,M_n\in \Lb(\HH)$ we can compute
\begin{equation}
\begin{split}
\LL^\ast_{-\nu}M^\ast(\iu\multm+\nu)\LL_{-\nu}&=\LL^\ast_{-\nu}\Big(\sum_{k=0}^n (-\iu\multm+\nu)^{-k}M^\ast_k\Big)\LL_{-\nu}\\
&=\sum_{k=0}^n (-1)^k \LL^\ast_{-\nu}(\iu\multm-\nu)^{-k}\LL_{-\nu}M^\ast_k\\
&=\sum_{k=0}^n(-1)^k \partial_{t,-\nu}^{-k}M^\ast_k\text{.}
\end{split}\label{eqBITAdjOfFinSum}
\end{equation}
Here, the first equality follows from $(\iu\multm+\nu)^{-1}$ being bounded and the second one since the theorem of Hille (applied to bounded
operators) implies
\begin{equation*}
(M_k^\ast)_{\Lp{2}}\LL_{-\nu}\varphi=\LL_{-\nu}(M_k^\ast)_{{\Lp[-\nu]{2}}}\varphi
\end{equation*}
for all $\varphi$ from the dense subset $\Cc(\R,\HH)$ of $\Lp[-\nu]{2}(\R,\HH)$.
\end{remark}
\begin{example}\label{exNuAdjOfEvoEqEx}
Applying~\labelcref{eqBITAdjOfFinSum} to \Cref{exEvoEq}, we can provide a few examples:
\begin{itemize}
\item \emph{Heat Equation}\newline
The differential operator of the $\nu$-adjoint heat equation reads
\begin{equation*}
-\partial_{t,-\nu}\begin{pmatrix} 1&0\\ 0&0\end{pmatrix}+\begin{pmatrix} 0&0\\ 0&a^{-\ast}\end{pmatrix}
-\begin{pmatrix} 0&\diverz \\ \grad &0\end{pmatrix}\text{.}
\end{equation*}
\item \emph{Wave Equation}\newline
The differential operator of the $\nu$-adjoint wave equation reads
\begin{equation*}
-\partial_{t,-\nu}\begin{pmatrix} 1&0\\ 0&T^{-1}\end{pmatrix}
+\begin{pmatrix} 0&\diver \\ \gradz &0\end{pmatrix}\text{.}
\end{equation*}
\item \emph{Maxwell's Equations}\newline
The differential operator of the $\nu$-adjoint Maxwell's equations reads
\begin{equation*}
-\partial_{t,-\nu}\begin{pmatrix} \varepsilon&0\\ 0&\mu\end{pmatrix}+\begin{pmatrix} \sigma^\ast&0\\ 0&0\end{pmatrix}
-\begin{pmatrix} 0&-\curl \\ \curlz &0\end{pmatrix}\text{.}
\end{equation*}
\end{itemize}
Comparing with Example \ref{exHGCont} (for the cases $B=0$) we obtain that these $\nu$-adjoint systems are consistent with the adjoint systems in Example \ref{exHGCont}.
\end{example}
\subsection{Time-Reversal}\label{secTimeReversalOfNuAdjoint}
Instead of treating the plain adjoint system~\labelcref{eq:Obs}, it is common to apply a time shift, i.e., to let the time run forward again, see~\labelcref{eq:Obs2}. At least in the special case~\labelcref{eqMatLawFinSumOfBdOp},
we have the same goal.
\begin{definition}
For a Hilbert space $\HH$ and $\nu\in\R$, we call the unitary mapping
 \begin{equation*}
\TT_\nu\colon
\begin{cases}
\hfill \Lp[\nu]{2} (\R,\HH) &\to\quad \Lp[-\nu]{2} (\R,\HH)\\
\hfill f&\mapsto\quad f(-\cdot)
\end{cases}
\end{equation*}
 the \emph{time-reversal operator}.
\end{definition}
\begin{remark}
Clearly, we have $\TT_{\nu}^{-1}=\TT_{\nu}^{\ast}=\TT_{-\nu}$ for $\nu\in\R$.
\end{remark}
\begin{lemma}\label{lemmaTimeRevTimeDerInterchange}
    For $\nu\in\R$ and a Hilbert space $\HH$, we have $\TT_\nu \partial_{t,\nu} = -\partial_{t,-\nu} \TT_\nu$.
\end{lemma}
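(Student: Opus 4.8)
The plan is to read the asserted identity as an equality of (unbounded) operators from $\Lp[\nu]{2}(\R,\HH)$ to $\Lp[-\nu]{2}(\R,\HH)$, which splits into two tasks: establishing the pointwise relation $\partial_{t,-\nu}\TT_\nu f=-\TT_\nu\partial_{t,\nu}f$ on $\dom(\partial_{t,\nu})$, and matching the domains, i.e.\ $\TT_\nu(\dom(\partial_{t,\nu}))=\dom(\partial_{t,-\nu})$. Since $\TT_\nu$ is everywhere defined, $\dom(\TT_\nu\partial_{t,\nu})=\dom(\partial_{t,\nu})$, so once the domain identity holds, the operators $\TT_\nu\partial_{t,\nu}$ and $-\partial_{t,-\nu}\TT_\nu$ share a domain and the pointwise relation yields equality.

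First I would verify the pointwise relation directly from the definition of the weak time derivative. Let $f\in\dom(\partial_{t,\nu})$ and put $g\coloneqq\partial_{t,\nu}f$, so that $\integral{\R}{}{\varphi(t)g(t)}{t}=-\integral{\R}{}{\varphi'(t)f(t)}{t}$ for all $\varphi\in\Cc(\R)$. Fixing a test function $\varphi$, the substitution $t\mapsto -t$ together with the observation that $\psi\coloneqq\varphi(-\cdot)$ is again in $\Cc(\R)$ with $\psi'=-\varphi'(-\cdot)$ transforms the defining identity for $g$ into the corresponding identity relating $\TT_\nu f=f(-\cdot)$ and $-\TT_\nu g=-g(-\cdot)$; the single minus sign is exactly the Jacobian of the reflection entering through $\psi'$. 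Since $\TT_\nu$ maps $\Lp[\nu]{2}(\R,\HH)$ isometrically onto $\Lp[-\nu]{2}(\R,\HH)$, we have $\TT_\nu g\in\Lp[-\nu]{2}(\R,\HH)$, whence $\TT_\nu f\in\dom(\partial_{t,-\nu})$ and $\partial_{t,-\nu}\TT_\nu f=-\TT_\nu g=-\TT_\nu\partial_{t,\nu}f$. This already gives $\TT_\nu\partial_{t,\nu}\subseteq-\partial_{t,-\nu}\TT_\nu$ together with $\TT_\nu(\dom(\partial_{t,\nu}))\subseteq\dom(\partial_{t,-\nu})$.

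For the reverse domain inclusion I would simply invoke the same argument with $\nu$ replaced by $-\nu$ and use $\TT_{-\nu}=\TT_\nu^{-1}$: it yields $\TT_{-\nu}(\dom(\partial_{t,-\nu}))\subseteq\dom(\partial_{t,\nu})$, i.e.\ $\dom(\partial_{t,-\nu})\subseteq\TT_\nu(\dom(\partial_{t,\nu}))$, closing the domain identity and hence the proof. I expect the genuine content to lie not in the formal sign computation but in this bookkeeping of domains; the reflection flips the sign for free, but one must confirm that $\TT_\nu$ respects the two weighted derivative domains in both directions. As an alternative I could avoid the distributional manipulation entirely by using the representation $\partial_{t,\nu}=\LL_\nu^\ast(\iu\multm+\nu)\LL_\nu$ from \Cref{lemmaWeakTimeDerProperties}: a short computation shows $\LL_{-\nu}\TT_\nu=R\LL_\nu$, where $R$ denotes the reflection $f\mapsto f(-\cdot)$ on the unweighted $\Lp{2}(\R,\HH)$, and then the elementary relation $R\multm=-\multm R$ propagates through the unitary conjugations to give the claim with domains handled automatically. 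I would keep the direct argument as the primary route since it also covers $\nu=0$ transparently.
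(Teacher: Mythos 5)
Your proposal is correct and follows essentially the same route as the paper: the substitution $t\mapsto -t$ with the reflected test function $\varphi(-\cdot)$ gives the pointwise relation and the inclusion $\TT_\nu(\dom(\partial_{t,\nu}))\subseteq\dom(\partial_{t,-\nu})$, and applying the same argument to $\TT_{-\nu}=\TT_\nu^{-1}$ closes the domain identity. Your remark that the real content is the domain bookkeeping, and your alternative via $\partial_{t,\nu}=\LL_\nu^\ast(\iu\multm+\nu)\LL_\nu$, are both sound but not needed beyond what the paper already does.
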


\begin{proof}
For $f\in \dom(\partial_{t,\nu})$, we have
\begin{multline*}
\integral{\R}{}{\varphi(s)(\TT_\nu \partial_{t,\nu}f)(s)}{s}=\integral{\R}{}{\varphi(s)(\partial_{t,\nu}f)(-s)}{s}= -\integral{\R}{}{\varphi(-s)(\partial_{t,\nu}f)(s)}{s}\\
= \integral{\R}{}{\frac{\mathrm{d}}{\mathrm{d}s}\varphi(-s)f(s)}{s}
= -\integral{\R}{}{\varphi^\prime(-s)f(s)}{s}=\integral{\R}{}{\varphi^\prime(s)f(-s)}{s}=\integral{\R}{}{\varphi^\prime(s)(\TT_\nu f)(s)}{s}
\end{multline*}
for any $\varphi\in\Cc (\R)$ which implies $\TT_\nu f\in\dom(\partial_{t,-\nu})$ with $\partial_{t,-\nu} \TT_\nu f=-\TT_\nu \partial_{t,\nu}f$. In addition, the same
argument applied to $\TT_{-\nu}=(\TT_{\nu})^{-1}$ shows that $(\TT_\nu)^{-1}(\dom(\partial_{t,-\nu}))= \dom(\partial_{t,\nu})$.
\end{proof}

\begin{lemma}\label{lemmaTimeReversalOfMatOpFinSum}
    Assume we have the material law from~\labelcref{eqMatLawFinSumOfBdOp}, i.e., $M(z)\coloneqq\sum_{k=0}^n z^{-k}M_k$
for $n\in\N\uplus\{0\}$, $z\in\C\setminus\{0\}$, a Hilbert space $\HH$ and $M_0,M_1,\dots,M_n\in L(\HH)$.
 Then, $(\partial_{t,\nu}M(\partial_{t,\nu}))^{\ast_\nu}  = \TT_{\nu}(\partial_{t,\nu}\sum_{k=0}^n \partial_{t,\nu}^{-k}M^\ast_k)\TT_{-\nu}$ for $\nu>0$.
\end{lemma}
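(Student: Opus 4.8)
The plan is to read off the $\nu$-adjoint from the results already proved and then rewrite every object living in the $(-\nu)$-world as a $\TT$-conjugate of the corresponding $\nu$-object. First I would record the starting point: combining \Cref{lemmaAdjTimeDerivMatLaw} with the explicit computation \labelcref{eqBITAdjOfFinSum} for the material law \labelcref{eqMatLawFinSumOfBdOp} gives
\[(\partial_{t,\nu}M(\partial_{t,\nu}))^{\ast_\nu}=-\partial_{t,-\nu}\sum_{k=0}^n(-1)^k\partial_{t,-\nu}^{-k}M_k^\ast,\]
where the sum is a bounded operator on $\Lp[-\nu]{2}(\R,\HH)$. It then remains to match this with the right-hand side of the claim.

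Next I would establish the conjugation identities. Rearranging \Cref{lemmaTimeRevTimeDerInterchange} and using $\TT_\nu^{-1}=\TT_{-\nu}$ yields
\[-\partial_{t,-\nu}=\TT_\nu\partial_{t,\nu}\TT_{-\nu}.\]
Since for $\nu>0$ both $\partial_{t,\nu}$ and $\partial_{t,-\nu}$ are boundedly invertible (\Cref{lemmaWeakTimeDerProperties}), taking the $k$-th inverse power makes the interior factors $\TT_{-\nu}\TT_\nu=1$ cancel, and together with $(-1)^{-k}=(-1)^k$ this gives
\[(-1)^k\partial_{t,-\nu}^{-k}=\TT_\nu\partial_{t,\nu}^{-k}\TT_{-\nu}\qquad(k=0,\dots,n).\]
Finally, because the lifted multiplication operator $M_k^\ast$ acts pointwise in $\HH$ while $\TT$ only reflects the time variable, the two commute, i.e.\ $M_k^\ast=\TT_\nu M_k^\ast\TT_{-\nu}$ (with $M_k^\ast$ on the right denoting the lift to $\Lp[\nu]{2}(\R,\HH)$).

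I would then substitute. Multiplying the last two displays and summing over $k$, the flanking time-reversals factor out, giving
\[\sum_{k=0}^n(-1)^k\partial_{t,-\nu}^{-k}M_k^\ast=\TT_\nu\Big(\sum_{k=0}^n\partial_{t,\nu}^{-k}M_k^\ast\Big)\TT_{-\nu}.\]
Prepending $-\partial_{t,-\nu}=\TT_\nu\partial_{t,\nu}\TT_{-\nu}$ and cancelling the inner $\TT_{-\nu}\TT_\nu=1$ then produces
\[(\partial_{t,\nu}M(\partial_{t,\nu}))^{\ast_\nu}=\TT_\nu\Big(\partial_{t,\nu}\sum_{k=0}^n\partial_{t,\nu}^{-k}M_k^\ast\Big)\TT_{-\nu},\]
which is the asserted identity.

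The hard part will be the domain bookkeeping, not the algebra: since $\sum_{k=0}^n\partial_{t,\nu}^{-k}M_k^\ast$ is bounded but $\partial_{t,\nu}$ is not, this is an equality of \emph{unbounded} operators, so I must check that the two sides have the same domain rather than merely agreeing on a common core. This is exactly what the last assertion of \Cref{lemmaTimeRevTimeDerInterchange} secures: $\TT_{-\nu}$ maps $\dom(\partial_{t,-\nu})$ bijectively onto $\dom(\partial_{t,\nu})$, so conjugation by the unitary $\TT$ transports the defining domain condition faithfully and no spurious enlargement or restriction of domains occurs. The only other place where care is needed is tracking the signs $(-1)^k$ through the inverse powers; everything else is routine.
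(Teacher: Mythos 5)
Your proposal is correct and follows exactly the route the paper takes: its proof is a one-line citation of \Cref{lemmaAdjTimeDerivMatLaw}, \Cref{lemmaTimeRevTimeDerInterchange} and~\labelcref{eqBITAdjOfFinSum}, and your argument is precisely the fleshed-out version of that combination, including the correct sign bookkeeping $(-1)^k\partial_{t,-\nu}^{-k}=\TT_\nu\partial_{t,\nu}^{-k}\TT_{-\nu}$ and the domain identification via the bijection $\TT_{-\nu}(\dom(\partial_{t,-\nu}))=\dom(\partial_{t,\nu})$.
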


\begin{proof}
    This is a direct consequence of \Cref{lemmaAdjTimeDerivMatLaw}, \Cref{lemmaTimeRevTimeDerInterchange} and~\labelcref{eqBITAdjOfFinSum}.
\end{proof}

We want to stress that \Cref{lemmaTimeReversalOfMatOpFinSum} heavily relies on the material law having the form~\labelcref{eqMatLawFinSumOfBdOp}.
We cannot expect a similar statement to hold in the general case. 

\begin{lemma}\label{lemmaTimeReversalOfTensorExtOp}
Consider a Hilbert spaces $\HH$. Let $A\colon\dom(A)\subseteq \HH\to \HH$ be densely defined and closed.
Then for $\nu\in\R$, $\TT_{\nu}A_{\Lp[\nu]{2}}=A_{\Lp[-\nu]{2}}\TT_{\nu}$.
\end{lemma}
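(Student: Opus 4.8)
The plan is to show that the two operators $\TT_\nu A_{\Lp[\nu]{2}}$ and $A_{\Lp[-\nu]{2}}\TT_\nu$, both regarded as maps from $\Lp[\nu]{2}(\R,\HH)$ to $\Lp[-\nu]{2}(\R,\HH)$, have the same domain and agree on it. Since $A$ acts pointwise in time and autonomously (cf.~\labelcref{eqDefinitionTensorTimeLiftingSpatialOperators}), while $\TT_\nu$ merely reverses the time argument, the equality of the \emph{values} will be immediate; the only point requiring care is the matching of \emph{domains}, which hinges on the compatibility of the time-reversal with the graph-norm space $\dom(A)$.

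First I would record the two domains. As $\TT_\nu$ is defined on all of $\Lp[\nu]{2}(\R,\HH)$ and $\dom(A_{\Lp[\nu]{2}})=\Lp[\nu]{2}(\R,\dom(A))$, we get $\dom(\TT_\nu A_{\Lp[\nu]{2}})=\Lp[\nu]{2}(\R,\dom(A))$. On the other hand, using $\TT_\nu^{-1}=\TT_{-\nu}$,
\[\dom(A_{\Lp[-\nu]{2}}\TT_\nu)=\{f\in\Lp[\nu]{2}(\R,\HH)\mid \TT_\nu f\in\Lp[-\nu]{2}(\R,\dom(A))\}=\TT_{-\nu}\big(\Lp[-\nu]{2}(\R,\dom(A))\big).\]
Next I would observe that the very substitution $s=-t$ making $\TT_\nu\colon\Lp[\nu]{2}(\R,\HH)\to\Lp[-\nu]{2}(\R,\HH)$ unitary, applied with $\HH$ replaced by the Hilbert space $\dom(A)$ equipped with the graph inner product (as fixed in the preliminaries), shows that $\TT_\nu$ restricts to a unitary map $\Lp[\nu]{2}(\R,\dom(A))\to\Lp[-\nu]{2}(\R,\dom(A))$. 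Hence $\TT_{-\nu}\big(\Lp[-\nu]{2}(\R,\dom(A))\big)=\Lp[\nu]{2}(\R,\dom(A))$, so the two domains coincide.

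Finally, on this common domain I would verify the identity pointwise: for $f\in\Lp[\nu]{2}(\R,\dom(A))$ and almost every $t\in\R$,
\[(\TT_\nu A_{\Lp[\nu]{2}}f)(t)=(A_{\Lp[\nu]{2}}f)(-t)=A\big(f(-t)\big)=A\big((\TT_\nu f)(t)\big)=(A_{\Lp[-\nu]{2}}\TT_\nu f)(t),\]
which gives the asserted operator equality. The only genuine obstacle is the domain equality of the preceding step; once one notes that time-reversal is insensitive to the choice of coefficient Hilbert space (so that it transports $\dom(A)$-valued weighted $\Lp{2}$-spaces unitarily into one another), everything else reduces to the routine pointwise computation above.
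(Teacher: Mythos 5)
Your proof is correct and follows essentially the same route as the paper, which simply observes that the identity is immediate from the definition \labelcref{eqDefinitionTensorTimeLiftingSpatialOperators} of the lifted operator; you have merely spelled out the two details implicit there, namely that time-reversal is a unitary between the weighted spaces with coefficient space $\dom(A)$ (so the domains match) and that the pointwise actions agree.
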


\begin{proof}
    This immediately follows from~\labelcref{eqDefinitionTensorTimeLiftingSpatialOperators}.
\end{proof}

\begin{theorem}
     Consider a Hilbert space $\HH$, $\nu>0$, and $M(z)\coloneqq\sum_{k=0}^n z^{-k}M_k$
for $n\in\N\uplus\{0\}$, $z\in\C\setminus\{0\}$ and $M_0,M_1,\dots,M_n\in \Lb(\HH)$, as well as a skew-selfadjoint $A\colon\dom(A)\subseteq \HH\to \HH$.
Then, we have
\begin{equation*}
\TT_{\nu}\Big(\overline{\partial_{t,\nu}\sum_{k=0}^n\nolimits \partial_{t,\nu}^{-k}M^\ast_k-A}\Big)\TT_{-\nu}= (\partial_{t,\nu}M(\partial_{t,\nu})+A)^{\ast_\nu}\text{.}
\end{equation*}
\end{theorem}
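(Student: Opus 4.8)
The plan is to conjugate \Cref{thmBITNuAdjSys} by the time-reversal operator, so that the only genuinely new work is to check that time reversal, closure, and the operator sum interact as expected. The first point I would record is that conjugation by the unitary bijection $\TT_\nu$ commutes with taking closures: for any closable operator $C$ on $\Lp[\nu]{2}(\R,\HH)$ the graph of $\TT_\nu C \TT_{-\nu}$ is the image of $\mathrm{graph}(C)$ under the homeomorphism $\TT_\nu\times\TT_\nu$, so that $\overline{\TT_\nu C \TT_{-\nu}}=\TT_\nu\overline{C}\,\TT_{-\nu}$, and closability is transported along the way. Applying this with $C\coloneqq\partial_{t,\nu}\sum_{k=0}^n\partial_{t,\nu}^{-k}M^\ast_k-A$ reduces the assertion to computing $\TT_\nu C \TT_{-\nu}$ explicitly.

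Next I would distribute the conjugation across the difference. Writing $P\coloneqq\partial_{t,\nu}\sum_{k=0}^n\partial_{t,\nu}^{-k}M^\ast_k$, the bijectivity of $\TT_\nu$ gives $\TT_\nu(\dom(P)\cap\dom(A))=\TT_\nu\dom(P)\cap\TT_\nu\dom(A)$, whence $\TT_\nu C \TT_{-\nu}=\TT_\nu P\TT_{-\nu}-\TT_\nu A\TT_{-\nu}$ with matching domains. For the material-law part, \Cref{lemmaTimeReversalOfMatOpFinSum} identifies $\TT_\nu P\TT_{-\nu}=(\partial_{t,\nu}M(\partial_{t,\nu}))^{\ast_\nu}$, which by \Cref{lemmaAdjTimeDerivMatLaw} equals $-\partial_{t,-\nu}\LL^\ast_{-\nu}M^\ast(\iu\multm+\nu)\LL_{-\nu}$. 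For the spatial part, \Cref{lemmaTimeReversalOfTensorExtOp} yields $\TT_\nu A\TT_{-\nu}=A_{\Lp[-\nu]{2}}$, i.e.\ $A$ in the usual abuse of notation.

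Combining the two computations gives $\TT_\nu C \TT_{-\nu}=-\partial_{t,-\nu}\LL^\ast_{-\nu}M^\ast(\iu\multm+\nu)\LL_{-\nu}-A$, so taking closures and re-using the first observation produces $\TT_\nu\overline{C}\,\TT_{-\nu}=\overline{-\partial_{t,-\nu}\LL^\ast_{-\nu}M^\ast(\iu\multm+\nu)\LL_{-\nu}-A}$, which is precisely $(\partial_{t,\nu}M(\partial_{t,\nu})+A)^{\ast_\nu}$ by \Cref{thmBITNuAdjSys}. The one place demanding care—and what I would treat as the main obstacle—is the domain bookkeeping: one must confirm that the conjugated difference and the original difference really have corresponding domains (rather than merely agreeing on a common core) and that closability passes through $\TT_\nu$. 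Both follow from $\TT_\nu$ being a unitary bijection, but they are exactly the steps where a careless argument could silently enlarge or shrink a domain, so I would write them out rather than leave them implicit.
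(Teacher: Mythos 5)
Your proposal is correct and follows exactly the route the paper takes: its proof is the one-line statement that the result is a combination of \Cref{lemmaTimeReversalOfMatOpFinSum}, \Cref{lemmaTimeReversalOfTensorExtOp} and \Cref{thmBITNuAdjSys}, and you have simply carried out that combination with the closure and domain bookkeeping made explicit. The point you flag as the main obstacle (that conjugation by the unitary $\TT_\nu$ commutes with taking closures and transports domains exactly) is indeed the only content the paper leaves implicit, and your justification via the graph being mapped homeomorphically is the right one.
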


\begin{proof}
    This is a combination of \Cref{lemmaTimeReversalOfMatOpFinSum}, \Cref{lemmaTimeReversalOfTensorExtOp} and \Cref{thmBITNuAdjSys}.
\end{proof}
 In other words, in the special case~\labelcref{eqMatLawFinSumOfBdOp}, the dual system~\labelcref{eq:Obs2} should correspond to an evolutionary equation of type
\begin{equation}\label{eqTimeReversedNuAdjointSystemSpCase}
\Big(\partial_{t,\nu}\sum_{k=0}^n \partial_{t,\nu}^{-k}M^\ast_k-A\Big)u=f
\end{equation}
for suitable $u,f\in \Lp[\nu]{2}(\R,\HH)$ and $\nu>0$ (cf.\ \Cref{thmPicard}).
\begin{example}
Applying~\labelcref{eqTimeReversedNuAdjointSystemSpCase} to \Cref{exEvoEq} and \Cref{exNuAdjOfEvoEqEx}, we can provide a few examples:
\begin{itemize}
\item \emph{Heat Equation}\newline
The time-reversed differential operator of the $\nu$-adjoint heat equation reads
\begin{equation*}
\partial_{t,\nu}\begin{pmatrix} 1&0\\ 0&0\end{pmatrix}+\begin{pmatrix} 0&0\\ 0&a^{-\ast}\end{pmatrix}
-\begin{pmatrix} 0&\diverz \\ \grad &0\end{pmatrix}\text{.}
\end{equation*}
\item \emph{Wave Equation}\newline
The time-reversed differential operator of the $\nu$-adjoint wave equation reads
\begin{equation*}
\partial_{t,\nu}\begin{pmatrix} 1&0\\ 0&T^{-1}\end{pmatrix}
+\begin{pmatrix} 0&\diver \\ \gradz &0\end{pmatrix}\text{.}
\end{equation*}
\item \emph{Maxwell's Equations}\newline
The time-reversed differential operator of the $\nu$-adjoint Maxwell's equations read
\begin{equation*}
\partial_{t,\nu}\begin{pmatrix} \varepsilon&0\\ 0&\mu\end{pmatrix}+\begin{pmatrix} \sigma^\ast&0\\ 0&0\end{pmatrix}
-\begin{pmatrix} 0&-\curl \\ \curlz &0\end{pmatrix}\text{.}
\end{equation*}
\end{itemize}
It is easy to see that these time-reversed operators are consistent with the corresponding ones obtained via \eqref{eq:Obs2}.
\end{example}
\subsection{Solution Theory for \texorpdfstring{$\nu$}{nu}-Adjoint Systems}
Apparently, we will have to tweak the existing solution theory for evolutionary equations in order to apply it to the backwards in time
adjoint systems. Analogously to \Cref{thmPicard}, we have the following theorem.
\begin{theorem}\label{thmMainWellPosednessNuAdj}
 Consider a Hilbert space $\HH$, a material law $M\colon\dom(M)\subseteq \C\to \Lb (\HH)$ with
\begin{equation*}
\forall h\in \HH\forall z\in \C,\Real z\geq \nu_0:\Real\iprod[\HH]{h}{zM(z)h}\geq c\norm[\HH]{h}^2 
\end{equation*}
for a $c>0$ and a $\nu_0>\max(\abscb (M),0)$, and a skew-selfadjoint $A\colon\dom(A)\subseteq \HH\to \HH$. Then,
\begin{itemize}
\item the closed subset $\overline{-\partial_{t,-\nu}\LL^\ast_{-\nu}M^\ast(\iu\multm+\nu)\LL_{-\nu}-A}$ of
$\Lp[-\nu]{2} (\R,\HH)\times \Lp[-\nu]{2} (\R,\HH)$ is an (unbounded) operator for all $\nu\geq\nu_0$.
\item For $\nu\geq\nu_0$, $S^{\ast_\nu}_\nu=(\overline{-\partial_{t,-\nu}\LL^\ast_{-\nu}M^\ast(\iu\multm+\nu)\LL_{-\nu}-A})^{-1}\in\Lb (\Lp[-\nu]{2} (\R,\HH))$ with $\norm{S^{\ast_\nu}_\nu}\leq 1/c$,
where $S_{\nu}$ is the solution operator defined in \Cref{thmPicard}.
\item $f\in \Lp[-\nu]{2} (\R,\HH)$ for a $\nu\geq\nu_0$ with $\spt f\subseteq (-\infty,a]$ for an $a\in\R$ implies $\spt S^{\ast_\nu}_\nu f\subseteq (-\infty,a]$.
\item $S^{\ast_\nu}_\nu$ is eventually independent of $\nu$.
\item $f\in\dom(\partial_{t,-\nu})$ for a $\nu\geq\nu_0$ implies $S^{\ast_\nu}_\nu f\in\dom(\partial_{t,-\nu})\cap\dom(A)$, i.e., by~\labelcref{eqSubsetDomainEvEqNuAdj} we get a solution in the sense
of~\labelcref{definNuAdjSysclassEq}.
\end{itemize}
 \end{theorem}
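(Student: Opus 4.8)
The plan is to recognize that the operator appearing in the theorem is nothing but the $\nu$-adjoint of Picard's operator, and then to transport all five properties from the solution operator $S_\nu$ of \Cref{thmPicard} across the duality pairing. Concretely, set $B\coloneqq\overline{\partial_{t,\nu}M(\partial_{t,\nu})+A}$, so that $S_\nu=B^{-1}\in\Lb(\Lp[\nu]{2}(\R,\HH))$ with $\norm{S_\nu}\leq 1/c$. By \Cref{thmBITNuAdjSys} together with $A^{\ast_\nu}=(\overline{A})^{\ast_\nu}$ from \Cref{remSecondAdjointBIT}, the closed subset in the first bullet equals $(\partial_{t,\nu}M(\partial_{t,\nu})+A)^{\ast_\nu}=B^{\ast_\nu}$. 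Since $\partial_{t,\nu}M(\partial_{t,\nu})+A$ is densely defined, $B^{\ast_\nu}$ is a linear operator by \Cref{remSecondAdjointBIT}, which settles the first bullet.

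For the second bullet, the key algebraic fact is that inversion and the $\nu$-adjoint commute. From the representation $A^{\ast_\nu}=(\exp(-2\nu\multm)\times\exp(-2\nu\multm))(A^\ast)$ in \labelcref{eqRemAdjointBIT1}, the classical identity $(C^{-1})^\ast=(C^\ast)^{-1}$, and the observation that inversion merely swaps the two (identical) weight factors, one obtains $(S_\nu^{-1})^{\ast_\nu}=(S_\nu^{\ast_\nu})^{-1}$, i.e.\ $B^{\ast_\nu}=(S_\nu^{\ast_\nu})^{-1}$. As $S_\nu$ is bounded, so is $S_\nu^{\ast_\nu}$ with $\norm{S_\nu^{\ast_\nu}}=\norm{S_\nu}\leq 1/c$ by \Cref{remSecondAdjointBIT}; hence $B^{\ast_\nu}$ is boundedly invertible with inverse exactly $S_\nu^{\ast_\nu}$, as asserted.

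The next two bullets I would derive from the defining pairing \labelcref{eqRemSecondAdjointBIT1}, namely $\dprod[\nu]{g}{S_\nu^{\ast_\nu}f}=\dprod[\nu]{S_\nu g}{f}$ for $g\in\Lp[\nu]{2}(\R,\HH)$ and $f\in\Lp[-\nu]{2}(\R,\HH)$, noting that the numerical value of the $\nu$-product is just the weight-independent integral $\int_\R\iprod[\HH]{\cdot}{\cdot}$. For the support statement, if $\spt f\subseteq(-\infty,a]$ and $\spt g\subseteq(a,\infty)$, then causality of $S_\nu$ gives $\spt S_\nu g\subseteq[a,\infty)$, so $\dprod[\nu]{S_\nu g}{f}=0$ because the integrand is supported in the null set $\{a\}$; testing against all such $g$ forces $S_\nu^{\ast_\nu}f=0$ on $(a,\infty)$. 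Eventual $\nu$-independence follows in the same spirit: for $f\in\Lp[-\nu_1]{2}(\R,\HH)\cap\Lp[-\nu_2]{2}(\R,\HH)$ and a test function $g\in\Cc(\R,\HH)$, the integral $\int_\R\iprod[\HH]{S_{\nu_i}g}{f}$ is independent of $i$ by eventual $\nu$-independence of $S_\nu$, whence $\dprod[\nu_1]{g}{S_{\nu_1}^{\ast_{\nu_1}}f}=\dprod[\nu_2]{g}{S_{\nu_2}^{\ast_{\nu_2}}f}$, and testing against the dense set $\Cc(\R,\HH)$ yields $S_{\nu_1}^{\ast_{\nu_1}}f=S_{\nu_2}^{\ast_{\nu_2}}f$.

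For the regularity bullet, I would first establish that $S_\nu^{\ast_\nu}$ commutes with $\partial_{t,-\nu}$ by taking the $\nu$-adjoint of the commutation $S_\nu\partial_{t,\nu}\subseteq\partial_{t,\nu}S_\nu$ underlying the last bullet of \Cref{thmPicard} and using $(\partial_{t,\nu})^{\ast_\nu}=-\partial_{t,-\nu}$ from \Cref{remNuAdjOfTimeDeriv}; concretely, via the resolvents $(1-\varepsilon\partial_{t,-\nu})^{-1}$ exactly as in the proof of \Cref{thmBITNuAdjSys}. Then $f\in\dom(\partial_{t,-\nu})$ yields $S_\nu^{\ast_\nu}f\in\dom(\partial_{t,-\nu})$, and feeding this back into $B^{\ast_\nu}S_\nu^{\ast_\nu}f=f$ with the explicit form of $B^{\ast_\nu}$ shows $S_\nu^{\ast_\nu}f\in\dom(A)$ as well, landing in $\dom(\partial_{t,-\nu})\cap\dom(A)$ in accordance with \labelcref{eqSubsetDomainEvEqNuAdj}. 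I expect this last bullet to be the main obstacle: unlike the first four it cannot be read off purely from the pairing, and one must carefully control the interplay of the unbounded time derivative, the material-law operator, and $A$ through the Yosida-type approximation — precisely the place where the operator sum fails to be closed.
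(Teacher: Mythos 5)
Your first four bullets follow the paper's proof essentially verbatim: identify the operator as $(\overline{\partial_{t,\nu}M(\partial_{t,\nu})+A})^{\ast_\nu}$ via \Cref{thmBITNuAdjSys}, use that inversion and the $\nu$-adjoint commute together with norm preservation for the second bullet, and transport causality and eventual $\nu$-independence through the pairing $\dprod[\nu]{S_\nu g}{f}=\dprod[\nu]{g}{S_\nu^{\ast_\nu}f}$ exactly as the paper does (the paper tests against $\1_{[a,\infty)}g$ and against $\varphi(\cdot)x$ with $\varphi\in\Cc(\R)$, $x\in\HH$; your variants are equivalent). The first half of your last bullet, namely $S_\nu^{\ast_\nu}\partial_{t,-\nu}\subseteq\partial_{t,-\nu}S_\nu^{\ast_\nu}$ obtained by adjointing $S_\nu\partial_{t,\nu}\subseteq\partial_{t,\nu}S_\nu$, is also the paper's argument.

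The gap is in the final step of the last bullet. Writing $u\coloneqq S_\nu^{\ast_\nu}f$ and $C\coloneqq-\partial_{t,-\nu}\LL^\ast_{-\nu}M^\ast(\iu\multm+\nu)\LL_{-\nu}$, what you know at that point is $u\in\dom(\partial_{t,-\nu})\subseteq\dom(C)$ and $u\in\dom(\overline{C-A})$ with $\overline{C-A}\,u=f$. \enquote{Feeding this back into $B^{\ast_\nu}u=f$ with the explicit form of $B^{\ast_\nu}$} does not yield $u\in\dom(A)$: the explicit form of $B^{\ast_\nu}$ is the \emph{closure} of the operator sum, and membership in the domain of a closure together with membership in the domain of one summand does not imply membership in the domain of the other summand --- this is precisely the failure of closedness that the whole section is built around, so the step cannot be waved through. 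To close it you either need that $\dom(\partial_{t,\nu}M(\partial_{t,\nu}))\cap\dom(A)$ is a core for $A$ in $\Lp[\nu]{2}(\R,\HH)$ (so that the defining relation of the $\nu$-adjoint, restricted to such test elements and with the material-law part subtracted, still characterises $\dom(A^{\ast_\nu})$), or you argue as the paper does: adjoin the operator inclusion $S_\nu A\restriction_{\dom(\partial_{t,\nu})}\subseteq 1-\partial_{t,\nu}S_\nu M(\partial_{t,\nu})$, using $\overline{A\restriction_{\dom(\partial_{t,\nu})}}=A$ (a density-in-graph-norm fact proved with the resolvents $(1+\varepsilon\partial_{t,\nu})^{-1}$), to obtain directly
\begin{equation*}
1+\LL^\ast_{-\nu}M^\ast(\iu\multm+\nu)\LL_{-\nu}S^{\ast_\nu}_{\nu}\partial_{t,-\nu}\subseteq -AS^{\ast_\nu}_{\nu}\text{,}
\end{equation*}
whose left-hand side is defined on all of $\dom(\partial_{t,-\nu})$ and hence gives $S_\nu^{\ast_\nu}f\in\dom(A)$. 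You correctly sense that this bullet is the main obstacle, but the mechanism you propose for it is not a valid inference as stated.
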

\begin{proof}
 As a first immediate consequence of the properties of the $\nu$-adjoint and~\Cref{thmBITNuAdjSys},
we infer that $\overline{-\partial_{t,-\nu}\LL^\ast_{-\nu}M^\ast(\iu\multm+\nu)\LL_{-\nu}-A}$ is a linear operator with
\begin{equation*}
\overline{-\partial_{t,-\nu}\LL^\ast_{-\nu}M^\ast(\iu\multm+\nu)\LL_{-\nu}-A}^{-1}
=\big(\big(\overline{\partial_{t,\nu}M(\partial_{t,\nu})+A}\big)^{\ast_\nu}\big)^{-1}
=\big(\underbrace{\big(\overline{\partial_{t,\nu}M(\partial_{t,\nu})+A}\big)^{-1}}_{=S_{\nu}}\big)^{\ast_\nu}
\end{equation*}
and $\norm{S^{\ast_\nu}_\nu}=\norm{S_\nu}\leq 1/c$ for all $\nu\geq \nu_0$.

Next, consider $f\in \Lp[-\nu]{2} (\R,\HH)$
with $\spt f\subseteq (-\infty, a]$ for $a\in\R$ and $\nu\geq \nu_0$. For any $g\in \Lp[\nu]{2} (\R,\HH)$, we have
\begin{equation}
\begin{aligned}\label{eqAmnesicDualToCausal1}
  \integral{\R}{}{\iprod[\HH]{g(t)}{\1_{[a,\infty)}(t)(S^{\ast_\nu}_\nu f)(t)}}{t}
  &=\integral{\R}{}{\iprod[\HH]{\1_{[a,\infty)}(t)g(t)}{(S^{\ast_\nu}_\nu f)(t)}}{t}\\
  &=\integral{\R}{}{\iprod[\HH]{S_{\nu}(\1_{[a,\infty)}(t)g(t))}{f(t)}}{t}\text{.}
\end{aligned}
\end{equation}
Causality of $S_{\nu}$ yields $\spt S_{\nu}(\1_{[a,\infty)}g)\subseteq [a,\infty)$. Together with $\spt f\subseteq (-\infty, a]$, this implies
\begin{equation}\label{eqAmnesicDualToCausal2}
    \integral{\R}{}{\iprod[\HH]{g(t)}{\1_{[a,\infty)}(t)(S^{\ast_\nu}_\nu f)(t)}}{t}=0\text{.}
\end{equation}
As $g\in \Lp[\nu]{2} (\R,\HH)$ was arbitrary, the properties of the $\nu$-adjoint show $\1_{[a,\infty)}S^{\ast_\nu}_{\nu} f=0$, i.e., $\spt S^{\ast_\nu}_{\nu} f\subseteq (-\infty, a]$.

For eventual independence, consider $\eta,\nu\geq\nu_0$ and
$f\in \Lp[-\nu]{2} (\R,\HH)\cap \Lp[-\eta]{2} (\R,\HH)$. For a fixed $x\in \HH$ and any $\varphi\in\Cc(\R)$, we have\footnote{Recall that
the eventual independence of the original evolutionary equation and $\varphi(\cdot)\cdot x\in \Lp[\nu]{2} (\R,\HH)\cap \Lp[\eta]{2} (\R,\HH)$ imply
$S_\nu(\varphi(\cdot)\cdot x)=S_\eta(\varphi(\cdot)\cdot x)$.}
\begin{align*}
\integral{\R}{}{\overline{\varphi(t)}\iprod[\HH]{x}{(S^{\ast_\nu}_\nu f)(t)&-(S^{\ast_\eta}_\eta f)(t)}}{t}
=\integral{\R}{}{\iprod[\HH]{\varphi(t)\cdot x}{(S^{\ast_\nu}_\nu f)(t)-(S^{\ast_\eta}_\eta f)(t)}}{t}\\
&=\integral{\R}{}{\iprod[\HH]{\varphi(t)\cdot x}{(S^{\ast_\nu}_\nu f)(t)}}{t}-\integral{\R}{}{\iprod[\HH]{\varphi(t)\cdot x}{(S^{\ast_\eta}_\eta f)(t)}}{t}\\
&=\integral{\R}{}{\iprod[\HH]{S_\nu(\varphi(\cdot)\cdot x)(t)}{f(t)}}{t}-\integral{\R}{}{\iprod[\HH]{S_\eta(\varphi(\cdot)\cdot x)(t)}{f(t)}}{t}\\
&=0\text{.}
\end{align*}
Hence, the fundamental lemma of calculus of variations yields
 $\iprod[\HH]{x}{(S^{\ast_\nu}_\nu f)(t)-(S^{\ast_\eta}_\eta f)(t)}=0$ for $t\in \R$.
A density argument implies
$S^{\ast_\nu}_\nu f=S^{\ast_\eta}_\eta f$.

Finally, consider some $\nu\geq \nu_0$. We know that $S_{\nu}\partial_{t,\nu}\subseteq \partial_{t,\nu}S_{\nu}$ (cf.~\cite[Remark~6.3.4]{SeTrWa22}). Applying the $\nu$-adjoint first and
then \Cref{lemmaSumProdOfNuAdjRel} as well as \Cref{remNuAdjOfTimeDeriv}, we obtain $\overline{-S^{\ast_\nu}_{\nu}\partial_{t,-\nu}}\subseteq -\partial_{t,-\nu}S^{\ast_\nu}_{\nu}$, i.e.,
we get $S^{\ast_\nu}_{\nu}\partial_{t,-\nu}\subseteq \partial_{t,-\nu}S^{\ast_\nu}_{\nu}$. Moreover by $M(\partial_{t,\nu})\partial_{t,\nu}\subseteq \partial_{t,\nu}M(\partial_{t,\nu})$, we have
\begin{align*}
S_{\nu}A\restriction_{\dom(\partial_{t,\nu})}&=1-S_{\nu}\partial_{t,\nu}M(\partial_{t,\nu})\restriction_{\dom(\partial_{t,\nu})\cap\dom(A)}
\intertext{and once again by $S_{\nu}\partial_{t,\nu}\subseteq \partial_{t,\nu}S_{\nu}$ that}
&=1-\partial_{t,\nu}S_{\nu}M(\partial_{t,\nu})\restriction_{\dom(\partial_{t,\nu})\cap\dom(A)}\\
&\subseteq 1-\partial_{t,\nu}S_{\nu}M(\partial_{t,\nu})\text{.}
\end{align*}
Applying the $\nu$-adjoint first and
then \Cref{lemmaSumProdOfNuAdjRel} as well as \Cref{remNuAdjOfTimeDeriv} and \Cref{remDualMatLawProperties}, we obtain\footnote{Recall $A^{\ast_\nu}=-A$ and
$\overline{A\restriction_{\dom(\partial_{t,\nu})}}=A$.}
 $1+\LL^\ast_{-\nu}M^\ast(\iu\multm+\nu)\LL_{-\nu}S^{\ast_\nu}_{\nu}\partial_{t,-\nu}\subseteq -AS^{\ast_\nu}_{\nu}$.
\end{proof}

\begin{definition}
Following~\cite[Chapter~3.2]{Trostorff2018}, we call a linear operator $S\colon\dom(S)\subseteq \Lp[\nu]{2} (\R,\HH)\to\Lp[\nu]{2} (\R,\HH)$ for a Hilbert space $\HH$ and $\nu \in\R$
\emph{amnesic} if $\spt f\subseteq (-\infty,a]$ implies $\spt Sf\subseteq (-\infty,a]$ for all $f\in\dom(S)$ and $a\in\R$. 
\end{definition}
In \Cref{thmMainWellPosednessNuAdj} we have shown that the solution operator corresponding to a well-posed $\nu$-adjoint system is amnesic. In fact, we have the following more general statement which can be seen as a (bounded) counterpart of \cite[Lemma~3.2.7]{Trostorff2018}.
\begin{corollary}
For a Hilbert space $\HH$ and $\nu \in\R$, a bounded operator $S\in\Lb(\Lp[\nu]{2} (\R,\HH))$ is amnesic if and only if $S^{\ast_\nu}\in\Lb(\Lp[-\nu]{2} (\R,\HH))$ is
causal.
\end{corollary}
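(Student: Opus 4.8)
The plan is to reduce both properties to the vanishing of suitable truncated operators and then to pass from one to the other using the defining identity of the $\nu$-adjoint together with the non-degeneracy of the $\nu$-product. Denote by $\1_E$ the operator of multiplication by the indicator of a Borel set $E$, acting on $\Lp[\nu]{2}(\R,\HH)$ as well as, with the same symbol, on $\Lp[-\nu]{2}(\R,\HH)$. A function satisfies $\spt f\subseteq(-\infty,a]$ exactly when $\1_{(a,\infty)}f=0$, i.e.\ $f=\1_{(-\infty,a]}f$; hence $S\in\Lb(\Lp[\nu]{2}(\R,\HH))$ is amnesic if and only if $\1_{(a,\infty)}S\1_{(-\infty,a]}=0$ for every $a\in\R$. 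In the same way, $S^{\ast_\nu}$ is causal if and only if $\1_{(-\infty,a)}S^{\ast_\nu}\1_{[a,\infty)}=0$ for every $a\in\R$. Since intervals differing by the single (null) point $\{a\}$ induce the same multiplication operator on $\Lp{2}$, this last condition may equivalently be written as $\1_{(-\infty,a]}S^{\ast_\nu}\1_{(a,\infty)}=0$ for every $a\in\R$.

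Next I would transport the truncations across the $\nu$-product. As $\1_E$ multiplies pointwise by a real scalar, it is self-$\nu$-adjoint, $\dprod[\nu]{\1_E f}{g}=\dprod[\nu]{f}{\1_E g}$; combining this with the adjoint relation $\dprod[\nu]{Sf}{g}=\dprod[\nu]{f}{S^{\ast_\nu}g}$, which holds for all $f\in\Lp[\nu]{2}(\R,\HH)$ and $g\in\Lp[-\nu]{2}(\R,\HH)$ since $S$ and $S^{\ast_\nu}$ are bounded (cf.\ \labelcref{eqRemSecondAdjointBIT1}), and moving each factor in turn yields
\[\dprod[\nu]{\1_{(a,\infty)}S\1_{(-\infty,a]}f}{g}=\dprod[\nu]{f}{\1_{(-\infty,a]}S^{\ast_\nu}\1_{(a,\infty)}g}\]
for all $f\in\Lp[\nu]{2}(\R,\HH)$ and $g\in\Lp[-\nu]{2}(\R,\HH)$. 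By the adapted Riesz representation theorem the $\nu$-product is non-degenerate in each argument, so the operator on the left vanishes precisely when the one on the right does. Quantifying over $a\in\R$ and invoking the two reformulations above, amnesicity of $S$ is seen to be equivalent to causality of $S^{\ast_\nu}$, and both implications are obtained simultaneously.

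The argument is essentially a dual of the support computation already carried out in the proof of \Cref{thmMainWellPosednessNuAdj}. The only point requiring care is the bookkeeping of the half-open endpoints, which I dispose of by the observation that indicators of intervals differing on a null set agree as multiplication operators on $\Lp{2}$; the remaining (mild) obstacle is merely to apply the adjoint relation and the self-$\nu$-adjointness of $\1_E$ on the correct side, so that truncations on $\Lp[\nu]{2}(\R,\HH)$ turn into the mirrored truncations on $\Lp[-\nu]{2}(\R,\HH)$.
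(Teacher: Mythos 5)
Your proof is correct and follows essentially the same route as the paper, which likewise moves the indicator truncations and the operator across the $\nu$-product (cf.\ the computation in \labelcref{eqAmnesicDualToCausal1}--\labelcref{eqAmnesicDualToCausal2}) and concludes via non-degeneracy of the pairing. Packaging it as the single operator identity $\dprod[\nu]{\1_{(a,\infty)}S\1_{(-\infty,a]}f}{g}=\dprod[\nu]{f}{\1_{(-\infty,a]}S^{\ast_\nu}\1_{(a,\infty)}g}$, which yields both implications at once, is only a stylistic refinement.
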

\begin{proof}
Arguing similarly to~\labelcref{eqAmnesicDualToCausal1} and~\labelcref{eqAmnesicDualToCausal2} yields the statement.
\end{proof}
\section{Control Theory for Evolutionary Equations}
\label{sec:ControlTheory}

In this section, we focus on control theory for evolutionary equations.

%
%
%
%
%
%

\subsection{Duality Statements}

We start with a version of Douglas' lemma. For a proof, see, e.g., \cite[Theorem 1]{Douglas-66} and \cite[Theorem 2.2]{Carja1988}.
For a normed space $X$ and $c\geq 0$, we write $K_c\coloneqq \{x\in X \mid \norm[X]{x}\leq c\}$.
\begin{lemma}
\label{lem:Douglas}
    Let $\HH_0, \HH_1, \HH$ be Hilbert spaces,  $A\in\Lb(\HH_1,\HH)$ and $B\in \Lb(\HH_0,\HH)$. The following are equivalent:
    \begin{enumerate}[label=\normalfont{(\roman*)}]
     \item $\ran(A) \subseteq \ran(B)$.
     \item There exists $C\in \Lb(\HH_1,\HH_0)$ such that $A=BC$.
     \item There exists $c\geq 0$ such that $A(K_1) \subseteq B(K_c)$.
     \item There exists $c\geq 0$ such that $\norm[\HH_1]{A^*x} \leq c \norm[\HH_0]{B^*x}$ for all $x\in \HH$.
    \end{enumerate}    
\end{lemma}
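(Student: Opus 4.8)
The plan is to run the equivalences as a cycle, concentrating the analytic content in the passage between the norm inequality (iv) and the factorization (ii); everything touching only ranges and operator norms is then bookkeeping. Concretely, I would prove (ii)$\Rightarrow$(iii)$\Rightarrow$(i), then (i)$\Rightarrow$(ii), and finally (ii)$\Leftrightarrow$(iv).

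First I would dispatch the cheap implications. For (ii)$\Rightarrow$(iv), write $A^*=C^*B^*$ and estimate $\norm[\HH_1]{A^*x}\leq\norm{C}\,\norm[\HH_0]{B^*x}$, so $c\coloneqq\norm{C}$ works; conversely, this estimate is exactly the content we must recover in (iv)$\Rightarrow$(ii). For (ii)$\Rightarrow$(iii), use $C(K_1)\subseteq K_{\norm{C}}$ to get $A(K_1)=B\bigl(C(K_1)\bigr)\subseteq B(K_{\norm{C}})$. For (iii)$\Rightarrow$(i), note $A(K_1)\subseteq B(K_c)\subseteq\ran(B)$ and that $\ran(A)=\bigcup_{n\in\N}n\,A(K_1)$ lies in the subspace $\ran(B)$.

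The heart of the argument is (iv)$\Rightarrow$(ii), the Douglas construction. Assuming $\norm[\HH_1]{A^*x}\leq c\,\norm[\HH_0]{B^*x}$ for all $x\in\HH$, I would define $D$ on $\ran(B^*)\subseteq\HH_0$ by $D(B^*x)\coloneqq A^*x$. The inequality (iv) does double duty: it makes $D$ well defined (if $B^*x_1=B^*x_2$ then $\norm[\HH_1]{A^*(x_1-x_2)}\leq c\,\norm[\HH_0]{B^*(x_1-x_2)}=0$) and bounded with $\norm{D}\leq c$. Extending $D$ continuously to $\overline{\ran(B^*)}$ and by $0$ on $\ker(B)=\ran(B^*)^\perp$ yields $D\in\Lb(\HH_0,\HH_1)$ with $DB^*=A^*$; then $C\coloneqq D^*\in\Lb(\HH_1,\HH_0)$ satisfies $BC=(DB^*)^*=(A^*)^*=A$, which is (ii).

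It remains to close the cycle with (i)$\Rightarrow$(ii), for which I would invoke the closed graph theorem. For $y\in\HH_1$ the element $Ay$ lies in $\ran(B)$, so $B^{-1}(\{Ay\})$ is a nonempty closed affine subspace with a unique minimal-norm element $Cy\in\ker(B)^\perp$; uniqueness forces $y\mapsto Cy$ to be linear, and $BC=A$ holds by construction. To see $C$ is bounded I would check that its graph is closed: if $y_n\to y$ and $Cy_n\to z$, then continuity of $A$ and $B$ give $Bz=\lim BCy_n=\lim Ay_n=Ay$, while $z\in\ker(B)^\perp$ as that subspace is closed, so $z=Cy$ by uniqueness. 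I expect the main obstacle to be (iv)$\Rightarrow$(ii): checking that the norm inequality is precisely what well-definedness of $D$ on $\ran(B^*)$ demands, and tracking which adjoint acts between which pair of spaces. By comparison the closed-graph step in (i)$\Rightarrow$(ii) is routine once the minimal-norm selection is set up.
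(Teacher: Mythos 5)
The paper offers no proof of this lemma at all: it simply points to \cite[Theorem 1]{Douglas-66} and \cite[Theorem 2.2]{Carja1988}, so there is no in-paper argument to compare against. Your proposal is a correct, self-contained reconstruction of the classical Douglas argument, and the two substantive steps are exactly where you put them. In (iv)$\Rightarrow$(ii) the operator $D$ with $D(B^*x)=A^*x$ is well defined and bounded on $\ran(B^*)$ precisely because of the inequality, the extension by continuity to $\overline{\ran(B^*)}$ and by $0$ on $\ker(B)=\ran(B^*)^\perp$ is legitimate since $\HH_0=\overline{\ran(B^*)}\oplus\ker(B)$, and the adjoint bookkeeping $BC=BD^*=(DB^*)^*=A$ checks out with $D\in\Lb(\HH_0,\HH_1)$, $C=D^*\in\Lb(\HH_1,\HH_0)$. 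In (i)$\Rightarrow$(ii) the characterisation of $Cy$ as the unique element of $B^{-1}(\{Ay\})\cap\ker(B)^\perp$ gives linearity, and the closed-graph verification is as you describe. The easy implications (ii)$\Rightarrow$(iii)$\Rightarrow$(i) and (ii)$\Rightarrow$(iv) are fine, and your choice of cycle does close all four equivalences. The only remark worth making is organisational: you carry out both hard constructions (closed graph for (i)$\Rightarrow$(ii) and the Douglas factorisation for (iv)$\Rightarrow$(ii)), which is unavoidable for the full four-way equivalence, but you could shorten the easy part slightly by proving (iii)$\Rightarrow$(iv) directly via $\norm[\HH_1]{A^*x}=\sup_{z\in A(K_1)}\abs{\iprod{z}{x}}\leq\sup_{z\in B(K_c)}\abs{\iprod{z}{x}}=c\norm[\HH_0]{B^*x}$, which is the duality computation the paper implicitly relies on later in Lemma \ref{lem:range_dual}.
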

\Cref{lem:Douglas} remains true for Banach spaces instead of Hilbert spaces as long as $\HH_0$ is reflexive.
\begin{lemma}
\label{lem:range_dual}
    Let $\nu\in\R$, $\HH_0,\HH$ be Hilbert spaces,  $A\in\Lb\bigl(\Lp[\nu]{2}(\R,\HH)\bigr)$ and $B\in \Lb\bigl(\Lp[\nu]{2} (\R,\HH_0), \Lp[\nu]{2} (\R,\HH)\bigr)$. The following are equivalent:
    \begin{enumerate}[label=\normalfont{(\roman*)}]
     \item $\ran(A) \subseteq \ran(B)$.
     \item There exists $c\geq 0$ such that  $\norm{A^{\ast_\nu}g}_{\Lp[-\nu]{2}} \leq c \norm{B^{\ast_\nu}g}_{\Lp[-\nu]{2}}$ for all $g\in \Lp[-\nu]{2} (\R,\HH)$.
    \end{enumerate}    
\end{lemma}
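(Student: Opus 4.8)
The plan is to deduce the claim from the classical Douglas lemma (\Cref{lem:Douglas}) by transporting the $\nu$-adjoint estimate to one for the ordinary Hilbert-space adjoints in $\Lp[\nu]{2}$. First I would apply \Cref{lem:Douglas} with the Hilbert spaces chosen as $\Lp[\nu]{2}(\R,\HH)$ (in the role of both the source of $A$ and the common target) and $\Lp[\nu]{2}(\R,\HH_0)$ (in the role of the source of $B$); note the mild clash that the spatial space here called $\HH$ plays the part of Douglas' domain/target, not of his ambient space. Since $A$ and $B$ are bounded and both map into $\Lp[\nu]{2}(\R,\HH)$, the lemma applies verbatim and shows that (i) is equivalent to the existence of $c\geq 0$ with $\norm{A^\ast x}_{\Lp[\nu]{2}}\leq c\norm{B^\ast x}_{\Lp[\nu]{2}}$ for all $x\in\Lp[\nu]{2}(\R,\HH)$, where $A^\ast,B^\ast$ denote the usual adjoints with respect to the weighted inner products.

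The second step is to rewrite this estimate in terms of the $\nu$-adjoints. By \labelcref{eqRemAdjointBIT1}, applied to the bounded operators $A$ and $B$, one has $A^{\ast_\nu}=\exp(-2\nu\multm)A^\ast\exp(2\nu\multm)$ and $B^{\ast_\nu}=\exp(-2\nu\multm)B^\ast\exp(2\nu\multm)$, where the outer weight acts on the relevant codomain ($\HH$ for $A$, $\HH_0$ for $B$) and the inner weight $\exp(2\nu\multm)\colon\Lp[-\nu]{2}(\R,\HH)\to\Lp[\nu]{2}(\R,\HH)$ is unitary. Setting $x\coloneqq\exp(2\nu\multm)g$ therefore gives a unitary bijection between $g\in\Lp[-\nu]{2}(\R,\HH)$ and $x\in\Lp[\nu]{2}(\R,\HH)$, and because $\exp(-2\nu\multm)$ is an isometry on each of $\Lp[\nu]{2}(\R,\HH)$ and $\Lp[\nu]{2}(\R,\HH_0)$, I obtain the norm identities $\norm{A^{\ast_\nu}g}_{\Lp[-\nu]{2}}=\norm{A^\ast x}_{\Lp[\nu]{2}}$ and $\norm{B^{\ast_\nu}g}_{\Lp[-\nu]{2}}=\norm{B^\ast x}_{\Lp[\nu]{2}}$.

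Combining these identities with the bijectivity of $g\mapsto x$ shows that the estimate in (ii) holds for all $g$ with a constant $c$ if and only if the classical estimate $\norm{A^\ast x}_{\Lp[\nu]{2}}\leq c\norm{B^\ast x}_{\Lp[\nu]{2}}$ holds for all $x$ with the \emph{same} constant $c$; chaining this with the equivalence from \Cref{lem:Douglas} then yields the claim. I do not expect a genuine obstacle, as all the mathematical content is already contained in \Cref{lem:Douglas}; the only points requiring care are bookkeeping ones, namely that the inner and outer weight operators act on the correct spaces so that the isometry identities hold exactly (preserving the constant), and that the two different roles played by the symbol $\HH$ are kept apart.
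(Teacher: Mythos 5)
Your argument is correct, but it takes a different route from the paper's. You conjugate the $\nu$-adjoints back to the ordinary weighted-space adjoints via \labelcref{eqRemAdjointBIT1}, i.e.\ $A^{\ast_\nu}=\exp(-2\nu\multm)A^{\ast}\exp(2\nu\multm)$ and likewise for $B$, use that the weight operators are unitary between the $\pm\nu$-weighted spaces to see that condition (ii) is \emph{verbatim} condition (iv) of \Cref{lem:Douglas} (with the same constant), and then invoke the classical equivalence (i)$\Leftrightarrow$(iv). This is clean and arguably shorter than what the paper does: there, the implication (i)$\Rightarrow$(ii) is obtained from Douglas' condition (iii), $A(K_1)\subseteq B(K_c)$, combined with the sup-characterisation of the $\Lp[-\nu]{2}$-norm through the $\nu$-product, while the converse (ii)$\Rightarrow$(i) runs through a separation argument for the convex sets $M=A(K_1)$ and $N=B(K_c)$ (weak closedness of $N$ via Banach--Alaoglu, comparison of support functionals, and an external lemma from \cite{EGST2024} to conclude $M\subseteq N$). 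The paper's route has the advantage of staying entirely inside the $\nu$-product formalism and of being the version that survives in non-Hilbertian settings where the duality pairing is not implemented by a unitary; in the present Hilbert-space situation your unitary-transport argument is the more economical one. Two bookkeeping points you already flagged deserve to be stated precisely in a write-up: the outer weight in $B^{\ast_\nu}=\exp(-2\nu\multm)B^{\ast}\exp(2\nu\multm)$ is the unitary $\Lp[\nu]{2}(\R,\HH_0)\to\Lp[-\nu]{2}(\R,\HH_0)$ (a unitary \emph{between} the two weighted spaces, not an isometry \emph{on} one of them), and in \Cref{lem:Douglas} the space playing the role of both $\HH_1$ and the ambient space is $\Lp[\nu]{2}(\R,\HH)$ while $\HH_0$ there is $\Lp[\nu]{2}(\R,\HH_0)$.
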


\begin{proof}
    (i)$\Rightarrow$(ii):
    By Lemma \ref{lem:Douglas}, there exists $c\geq 0$ such that $A(K_1) \subseteq B(K_c)$. Let $g\in \Lp[-\nu]{2}(\R,\HH)$. Then,
    \begin{align*}
        \norm{A^{\ast_\nu}g}_{\Lp[-\nu]{2}} & = \sup_{f\in \Lp[\nu]{2}(\R,\HH),\,\norm{f}_{\Lp[\nu]{2}}\leq 1} \abs{\dprod{f}{A^{\ast_\nu}g}_{\nu}} \\
        & = \sup_{f\in \Lp[\nu]{2}(\R,\HH),\,\norm{f}_{\Lp[\nu]{2}}\leq 1} \abs{\dprod{Af}{g}_{\nu}} \\
        & \leq \sup_{f\in \Lp[\nu]{2}(\R,\HH_0),\,\norm{f}_{\Lp[\nu]{2}}\leq c} \abs{\dprod{Bf}{g}_{\nu}} \\
        & = \sup_{f\in \Lp[\nu]{2}(\R,\HH_0),\,\norm{f}_{\Lp[\nu]{2}}\leq c} \abs{\dprod{f}{B^{\ast_\nu}g}_{\HH_0,\nu}} \\
        & = c \norm{B^{\ast_\nu}g}_{\Lp[-\nu]{2}}. 
    \end{align*}
    
    (ii)$\Rightarrow$(i):
    Note that $M\coloneqq A(K_1)$ and $N\coloneqq B(K_c)$ are convex, and $N$ is closed by the Banach--Alaoglu theorem. 
    For all $g\in \Lp[-\nu]{2}(\R,\HH)$ we estimate
    \begin{align*}
        \sup_{f\in M} \abs{\dprod{f}{g}_{\nu}} = \norm{A^{\ast_\nu}g}_{\Lp[-\nu]{2}} \leq c \norm{B^{\ast_\nu}g}_{\Lp[-\nu]{2}} = \sup_{f\in N} \abs{\dprod{f}{g}_{\nu}}.
    \end{align*}
  Since $\exp(2\nu \multm)\from \Lp[-\nu]{2}(\R,\HH)\to \Lp[\nu]{2}(\R,\HH)$ is unitary,~\labelcref{eq:ConnWeiInnProdNuProd} implies
    \[\sup_{f\in M} \abs{\iprod{f}{h}_{\Lp[\nu]{2}}} \leq \sup_{f\in N} \abs{\iprod{f}{h}_{\Lp[\nu]{2}}}\]
    for all $h\in \Lp[\nu]{2}(\R,\HH)$.
    From \cite[Lemma 2.9]{EGST2024} 
    , we obtain $M\subseteq N$ and Lemma \ref{lem:Douglas} yields the assertion. 
\end{proof}

%
%
%

\subsection{Null-Controllability for Evolutionary Equations}\label{sec:NullControlWithoutPwise}

Let $\HH, \HH_0$ be Hilbert spaces, $\nu>0$, $M$ a material law and $A$ skew-selfadjoint in $\HH$. Let $B\in\Lb(\HH_0,\HH)$. Like $A$ (see~\labelcref{eqDefinitionTensorTimeLiftingSpatialOperators}), we lift $B$ via action as a multiplication operator such that $B\in\Lb\bigl(\Lp[\nu]{2}(\R,\HH_0),\Lp[\nu]{2}(\R,\HH)\bigr)$. Furthermore, assume 
\begin{equation*}
\forall h\in \HH\forall z\in \C,\Real z\geq \nu_0:\Real\iprod[\HH]{h}{zM(z)h}\geq c\norm[\HH]{h}^2 
\end{equation*}
for a $c>0$ and a $\nu_0>\max(\abscb (M),0)$.
 Then, we consider the evolutionary equation
\begin{equation}
\label{eq:EE_control}
    \bigl(\partial_t M(\partial_t) + A\bigr) U = F+ BG.
\end{equation}
Let $S_\nu\coloneqq \bigl(\overline{\partial_{t,\nu} M(\partial_{t,\nu}) + A}\bigr)^{-1} \in \Lb\bigl(\Lp[\nu]{2}(\R,\HH)\bigr)$ be the corresponding solution operator from
\Cref{thmPicard}.

\begin{definition}
    Let $T >0$. We call \eqref{eq:EE_control} \emph{null-controllable} in time $T$ if for all $F\in \Lp[\nu]{2}(\R,\HH)$ there exists $G\in \Lp[\nu]{2}(\R,\HH_0)$ such that $\spt S_\nu(F+BG) \subseteq (-\infty,T]$.
\end{definition}

For $T>0$ we identify
\begin{align*}
    \Lp[\nu]{2}((-\infty,T],\HH) & \coloneqq \{f\in \Lp[\nu]{2}(\R,\HH):\; \spt f \subseteq (-\infty,T]\},\\
    \Lp[\nu]{2}([T,\infty),\HH) & \coloneqq \{f\in \Lp[\nu]{2}(\R,\HH):\; \spt f \subseteq [T,\infty)\}.    
\end{align*}
Thus, null-controllability in time $T$ can be rephrased as for all $F\in \Lp[\nu]{2}(\R,\HH)$ there exists $G\in \Lp[\nu]{2}(\R,\HH_0)$ such that $S_\nu(F+BG) \in  \Lp[\nu]{2}((-\infty,T],\HH)$.

For $\nu\in \R$ and $T\in\R$ we define $\1_{\leq T,\nu}\from \Lp[\nu]{2}((-\infty,T],\HH)\to \Lp[\nu]{2}(\R,\HH)$ and $\1_{\geq T,\nu}\from \Lp[\nu]{2}([T,\infty),\HH)\to \Lp[\nu]{2}(\R,\HH)$ to be the canonical embeddings.
We write $r_{\leq T,\nu}\coloneqq \1_{\leq T,\nu}\1_{\leq T,\nu}^{\ast}$ and $r_{\geq T,\nu}\coloneqq \1_{\geq T,\nu}\1_{\geq T,\nu}^{\ast}$ for the restriction maps.

\begin{lemma}\label{lemma:AdjointCanonEmbIndFct}
 Let $\nu\in\R$ and $T >0$. Then $\1_{\leq T,\nu}^{\ast}\from \Lp[\nu]{2}(\R,\HH) \to \Lp[\nu]{2}((-\infty,T],\HH)$ is given by 
 \[\1_{\leq T,\nu}^{\ast} f = f_T\]
for $f\in  \Lp[\nu]{2}(\R,\HH)$, where $f_T\in \Lp[\nu]{2}((-\infty,T],\HH)$ is defined to coincide with $f$ on $(-\infty,T]$. Furthermore, $(r_{\leq T,\nu})^{\ast_\nu}=r_{\leq T,-\nu}$.

Analogous statements for $\1_{\geq T,\nu}^{\ast}$ and $r_{\geq T,\nu}$ hold.
\end{lemma}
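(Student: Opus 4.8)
The plan is to handle the two assertions separately, since the first concerns the ordinary Hilbert space adjoint (written with $\ast$) while the second concerns the $\nu$-adjoint (written with $\ast_\nu$); keeping these two notions apart is essentially the only bookkeeping the proof requires.

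First I would compute $\1_{\leq T,\nu}^{\ast}$. Fix $f\in \Lp[\nu]{2}(\R,\HH)$ and $g\in \Lp[\nu]{2}((-\infty,T],\HH)$. By definition of the Hilbert adjoint it suffices to verify $\iprod[{\Lp[\nu]{2}}]{\1_{\leq T,\nu}g}{f}=\iprod[{\Lp[\nu]{2}}]{g}{f_T}$, where both inner products carry the weight $\euler^{-2\nu t}$. Since $g$ vanishes almost everywhere outside $(-\infty,T]$, the integral defining the left-hand side runs only over $(-\infty,T]$, and there $f$ and $f_T$ agree by construction; hence the two sides coincide and $\1_{\leq T,\nu}^{\ast}f=f_T$. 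Composing with the embedding afterwards shows that $r_{\leq T,\nu}=\1_{\leq T,\nu}\1_{\leq T,\nu}^{\ast}$ is nothing but multiplication by the indicator $\1_{(-\infty,T]}$, and in particular an orthogonal projection on $\Lp[\nu]{2}(\R,\HH)$ (so $(r_{\leq T,\nu})^{\ast}=r_{\leq T,\nu}$).

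For the second assertion I would pass from the weighted inner product to the $\nu$-product, and this is where the statement genuinely uses the construction of \Cref{secDefPropNuProd}. By~\labelcref{eqRemSecondAdjointBIT1}, the operator $(r_{\leq T,\nu})^{\ast_\nu}\in\Lb(\Lp[-\nu]{2}(\R,\HH))$ is characterised by $\dprod[\nu]{r_{\leq T,\nu}f}{g}=\dprod[\nu]{f}{(r_{\leq T,\nu})^{\ast_\nu}g}$ for all $f\in \Lp[\nu]{2}(\R,\HH)$ and $g\in \Lp[-\nu]{2}(\R,\HH)$. The crucial point is that, in contrast to the weighted inner product, the $\nu$-product carries no exponential factor, so that
\[\dprod[\nu]{r_{\leq T,\nu}f}{g}=\integral{-\infty}{T}{\iprod[\HH]{f(t)}{g(t)}}{t}=\dprod[\nu]{f}{\1_{(-\infty,T]}g},\]
and $\1_{(-\infty,T]}g$ is precisely $r_{\leq T,-\nu}g$ regarded in $\Lp[-\nu]{2}(\R,\HH)$. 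This yields $(r_{\leq T,\nu})^{\ast_\nu}=r_{\leq T,-\nu}$. Alternatively, one may invoke the identity~\labelcref{eqRemAdjointBIT1}: since $r_{\leq T,\nu}$ is self-adjoint and commutes with every multiplication operator, in particular with $\exp(\pm 2\nu\multm)$, the conjugation by these weights is trivial and leaves the indicator multiplication unchanged, now acting on $\Lp[-\nu]{2}(\R,\HH)$.

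Finally, the analogous statements for $\1_{\geq T,\nu}^{\ast}$ and $r_{\geq T,\nu}$ follow verbatim upon replacing $(-\infty,T]$ by $[T,\infty)$. Thus the computations are entirely routine; the one point demanding care is to exploit that the weight $\euler^{-2\nu t}$, present in the Hilbert inner product on $\Lp[\nu]{2}(\R,\HH)$, is absent from the $\nu$-product pairing of $\Lp[\nu]{2}$ with $\Lp[-\nu]{2}$ --- which is exactly why the restriction survives unchanged while the weight flips from $\nu$ to $-\nu$.
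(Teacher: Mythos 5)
Your proposal is correct and follows essentially the same route as the paper: the first adjoint is computed against the weighted inner product using that the pairing only sees $(-\infty,T]$ where $f$ and $f_T$ agree, and the identity $(r_{\leq T,\nu})^{\ast_\nu}=r_{\leq T,-\nu}$ is obtained from the same chain of integrals for the (unweighted) $\nu$-product. The extra observations (that $r_{\leq T,\nu}$ is the orthogonal projection given by multiplication with $\1_{(-\infty,T]}$, and the alternative argument via conjugation with $\exp(\pm2\nu\multm)$ from~\labelcref{eqRemAdjointBIT1}) are correct but not needed.
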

\begin{proof}
    Let $f\in \Lp[\nu]{2}(\R,\HH)$ and $g\in \Lp[\nu]{2}((-\infty,T],\HH)$. Then clearly,
    \[\iprod{g}{\1_{\leq T,\nu}^{\ast}f}_{\Lp[\nu]{2}} =\iprod{\1_{\leq T,\nu}g}{f}_{\Lp[\nu]{2}} = \iprod{\1_{\leq T,\nu}g}{\1_{\leq T,\nu}f_T}_{\Lp[\nu]{2}}=
    \iprod{g}{f_T}_{\Lp[\nu]{2}}.\]    
    Next, for $f\in \Lp[\nu]{2}(\R,\HH)$ and $g\in \Lp[-\nu]{2}(\R,\HH)$, 
    \begin{multline*}
    \dprod{r_{\leq T,\nu}f}{g}_{\nu} = \int_\R \iprod[\HH]{\1_{\leq T,\nu}f_T(t)}{g(t)}\,dt = \int_{-\infty}^T \iprod[\HH]{f(t)}{g(t)}\,dt\\ =\int_\R \iprod[\HH]{f(t)}{\1_{\leq T,-\nu}g_T(t)}\,dt = \dprod{f}{r_{\leq T,-\nu}g}_{\nu}.\end{multline*}
    \end{proof}

\begin{theorem}
\label{thm:control_equivalence}
    Let $T>0$.
    The following are equivalent:
    \begin{enumerate}[label=\normalfont{(\roman*)}]
     \item \eqref{eq:EE_control} is null-controllable in time $T$.
     \item $\ran(r_{\geq T,\nu} S_\nu) \subseteq \ran(r_{\geq T,\nu} S_\nu B)$.
     \item There exists $c\geq 0$ with $\norm{(r_{\geq T,\nu} S_\nu)^{\ast_\nu} F}_{\Lp[-\nu]{2}} \leq c \norm{(r_{\geq T,\nu} S_\nu B)^{\ast_\nu} F}_{\Lp[-\nu]{2}}$ for all $F\in \Lp[-\nu]{2}(\R,\HH)$.
     \item There exists $c\geq 0$with $\norm{S_\nu^{\ast_\nu} \1_{\geq T,-\nu} F}_{\Lp[-\nu]{2}} \leq c \norm{B^{\ast_\nu} S_\nu^{\ast_\nu} \1_{\geq T,-\nu} F}_{\Lp[-\nu]{2}}$ for all $F\in \Lp[-\nu]{2}([T,\infty),\HH)$.
    \end{enumerate}
\end{theorem}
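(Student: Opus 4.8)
The plan is to prove the chain of equivalences (i)$\Leftrightarrow$(ii)$\Leftrightarrow$(iii)$\Leftrightarrow$(iv) link by link. The first link is a reformulation of the definition, the second is a direct invocation of \Cref{lem:range_dual}, and the third reduces to computing $\nu$-adjoints of compositions of bounded operators together with a reindexing of the quantifier.

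First I would treat (i)$\Leftrightarrow$(ii). For $f\in\Lp[\nu]{2}(\R,\HH)$ the condition $\spt f\subseteq(-\infty,T]$ is equivalent to $r_{\geq T,\nu}f=0$, because the two descriptions differ only on the Lebesgue null set $\{T\}$. Hence null-controllability in time $T$ states precisely that for every $F\in\Lp[\nu]{2}(\R,\HH)$ there is $G$ with $r_{\geq T,\nu}S_\nu(F+BG)=0$, i.e.\ $r_{\geq T,\nu}S_\nu F=-r_{\geq T,\nu}S_\nu BG\in\ran(r_{\geq T,\nu}S_\nu B)$. Letting $F$ range over $\Lp[\nu]{2}(\R,\HH)$ (and replacing $G$ by $-G$) yields the range inclusion (ii); reading the same equalities backwards gives the converse.

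For (ii)$\Leftrightarrow$(iii), I would simply apply \Cref{lem:range_dual} with the bounded operators $r_{\geq T,\nu}S_\nu\in\Lb(\Lp[\nu]{2}(\R,\HH))$ in the role of $A$ and $r_{\geq T,\nu}S_\nu B\in\Lb(\Lp[\nu]{2}(\R,\HH_0),\Lp[\nu]{2}(\R,\HH))$ in the role of $B$; the hypotheses of that lemma then hold verbatim and the equivalence is immediate.

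The remaining link (iii)$\Leftrightarrow$(iv) is where the actual bookkeeping lies. Since all factors are bounded, the product rule for $\nu$-adjoints of bounded operators gives $(r_{\geq T,\nu}S_\nu)^{\ast_\nu}=S_\nu^{\ast_\nu}(r_{\geq T,\nu})^{\ast_\nu}$ and $(r_{\geq T,\nu}S_\nu B)^{\ast_\nu}=B^{\ast_\nu}S_\nu^{\ast_\nu}(r_{\geq T,\nu})^{\ast_\nu}$, while \Cref{lemma:AdjointCanonEmbIndFct} identifies $(r_{\geq T,\nu})^{\ast_\nu}=r_{\geq T,-\nu}=\1_{\geq T,-\nu}\1_{\geq T,-\nu}^{\ast}$. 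Substituting these into (iii) turns the inequality into $\norm{S_\nu^{\ast_\nu}\1_{\geq T,-\nu}\1_{\geq T,-\nu}^{\ast}F}_{\Lp[-\nu]{2}}\leq c\norm{B^{\ast_\nu}S_\nu^{\ast_\nu}\1_{\geq T,-\nu}\1_{\geq T,-\nu}^{\ast}F}_{\Lp[-\nu]{2}}$, now required for all $F\in\Lp[-\nu]{2}(\R,\HH)$. Putting $\widetilde F\coloneqq\1_{\geq T,-\nu}^{\ast}F$ and using that $\1_{\geq T,-\nu}^{\ast}$ maps $\Lp[-\nu]{2}(\R,\HH)$ onto $\Lp[-\nu]{2}([T,\infty),\HH)$ (indeed $\1_{\geq T,-\nu}^{\ast}\1_{\geq T,-\nu}$ is the identity), the universal quantifier over $F$ becomes a universal quantifier over $\widetilde F\in\Lp[-\nu]{2}([T,\infty),\HH)$, and the inequality is exactly (iv). The hard part is this last step: one must keep careful track of the three nested $\nu$-adjoints moving between the weighted spaces $\Lp[\nu]{2}$ and $\Lp[-\nu]{2}$, and justify that the reindexing $F\mapsto\widetilde F$ preserves the universally quantified inequality in both directions; once the product rule and \Cref{lemma:AdjointCanonEmbIndFct} are in place, no further estimates are needed.
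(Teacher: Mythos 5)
Your proposal is correct and follows essentially the same route as the paper's proof: the same reformulation of the support condition for (i)$\Leftrightarrow$(ii), the same invocation of \Cref{lem:range_dual} for (ii)$\Leftrightarrow$(iii), and the same adjoint identities together with \Cref{lemma:AdjointCanonEmbIndFct} for (iii)$\Leftrightarrow$(iv). The only difference is that you make explicit the reindexing $F\mapsto\1_{\geq T,-\nu}^{\ast}F$ and the surjectivity of $\1_{\geq T,-\nu}^{\ast}$, which the paper leaves implicit.
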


\begin{proof}
    (i)$\Rightarrow$(ii): 
    Let $h\in \ran(r_{\geq T,\nu}S_\nu)$, i.e., there exists $f\in \Lp[\nu]{2}(\R,\HH)$ such that $r_{\geq T,\nu} S_\nu f = h$. By (i), there exists $g\in \Lp[\nu]{2}(\R,\HH_0)$ such that $\spt S_\nu(f+Bg) \subseteq(-\infty,T]$, i.e.\ $r_{\geq T,\nu} S_\nu (f+Bg) = 0$. This yields $h = r_{\geq T,\nu} S_\nu f = r_{\geq T,\nu} S_\nu B(-g)$; that is, $h\in \ran(r_{\geq T,\nu} S_\nu B)$.
    
    (ii)$\Rightarrow$(i): 
    Let $f\in \Lp[\nu]{2}(\R,\HH)$ and $h\coloneqq r_{\geq T,\nu} S_\nu f\in \ran(r_{\geq T,\nu} S_\nu)$. By (ii) there exists $g\in \Lp[\nu]{2}(\R,\HH_0)$ such that $h = r_{\geq T,\nu} S_\nu Bg$. Then, $r_{\geq T,\nu} S_\nu(f+B(-g)) = 0$; that is, $\spt S_\nu(f+B(-g))\subseteq (-\infty,T]$, so \eqref{eq:EE_control} is null-controllable in time $T$.
    
    (ii)$\Leftrightarrow$(iii):
    This is a consequence of Lemma \ref{lem:range_dual}.
    
    (iii)$\Leftrightarrow$(iv):
    The claim follows from
    \Cref{lemma:AdjointCanonEmbIndFct} and $(r_{\geq T,\nu} S_\nu)^{\ast_\nu} = S_\nu^{\ast_\nu} r_{\geq T,-\nu}$ and $(r_{\geq T,\nu} S_\nu B)^{\ast_\nu} = B^{\ast_\nu} S_\nu^{\ast_\nu} r_{\geq T,-\nu}$.
\end{proof}

\begin{example}
    We get back to our set of examples.
    \begin{itemize}
\item \emph{Heat Equation}: Let $\Omega\subseteq \R^d$ be open, $\Omega_0\subseteq \Omega$, and let $\HH\coloneqq \Lp{2}(\Omega)\times \Lp{2}(\Omega)^d$ and $\HH_0\coloneqq \Lp{2}(\Omega_0)$. Let $B_0\from \Lp{2}(\Omega_0)\to \Lp{2}(\Omega)$ be the embedding by extending functions on $\Omega_0$ to $\Omega$ by zero, and let $B\in \Lb(\HH_0,\HH)$ be given by $B\coloneqq \begin{pmatrix}B_0 \\ 0 \end{pmatrix}$. Then the controlled Heat Equation is given by
\begin{equation*}
\bigg(\partial_{t,\nu}\begin{pmatrix} 1&0\\ 0&0\end{pmatrix}+\begin{pmatrix} 0&0\\ 0&a^{-1}\end{pmatrix} 
+\begin{pmatrix} 0&\diverz \\ \grad &0\end{pmatrix}\bigg)\begin{pmatrix} \vartheta\\ q\end{pmatrix}
=\begin{pmatrix} Q\\ 0\end{pmatrix} + BG \text{.}
\end{equation*}
For $T>0$, the Heat Equation is null-controllable in time $T$ if and only if there exists $c\geq 0$ such that for all $F\in \Lp[-\nu]{2}([T,\infty),\HH)$ and the solution $\begin{pmatrix} \vartheta\\q\end{pmatrix}$ of the $\nu$-adjoint system
\begin{equation*}
\left(-\partial_{t,-\nu}\begin{pmatrix} 1&0\\ 0&0\end{pmatrix}+\begin{pmatrix} 0&0\\ 0&a^{-\ast}\end{pmatrix}
-\begin{pmatrix} 0&\diverz \\ \grad &0\end{pmatrix}\right)\begin{pmatrix}\vartheta\\q\end{pmatrix} = \1_{\geq T} F
\end{equation*}
satisfies
$\norm{\begin{pmatrix} \vartheta \\q\end{pmatrix}}_{\Lp[-\nu]{2}} \leq c \norm{\vartheta\restriction_{\Omega_0}}_{\Lp[-\nu]{2}}$. 

\item \emph{Wave Equation}
Let $\Omega\subseteq \R^d$ be open, $\Omega_0\subseteq \Omega$, and let $\HH\coloneqq \Lp{2}(\Omega)\times \Lp{2}(\Omega)^d$ and $\HH_0\coloneqq \Lp{2}(\Omega_0)$. Let $B_0\from \Lp{2}(\Omega_0)\to \Lp{2}(\Omega)$ be the embedding by extending functions on $\Omega_0$ to $\Omega$ by zero, and let $B\in \Lb(\HH_0,\HH)$ be given by $B\coloneqq \begin{pmatrix}B_0 \\ 0 \end{pmatrix}$. Then the controlled Wave Equation is given by
\begin{equation*}
\bigg(\partial_{t,\nu}\begin{pmatrix} 1&0\\ 0&T^{-1}\end{pmatrix}
-\begin{pmatrix} 0&\diver \\ \gradz &0\end{pmatrix}\bigg)\begin{pmatrix} v\\ \sigma\end{pmatrix}
=\begin{pmatrix} f\\ 0\end{pmatrix} + BG\text{.}
\end{equation*}
For $S>0$, the Wave Equation is null-controllable in time $S$ if and only if there exists $c\geq 0$ such that for all $F\in \Lp[-\nu]{2}([T,\infty),\HH)$ and the solution $\begin{pmatrix} v\\\sigma\end{pmatrix}$ of the $\nu$-adjoint system
\begin{equation*}
\left(-\partial_{t,-\nu}\begin{pmatrix} 1&0\\ 0&T^{-1}\end{pmatrix}
+\begin{pmatrix} 0&\diverz \\ \grad &0\end{pmatrix}\right)\begin{pmatrix}v\\\sigma\end{pmatrix} = \1_{\geq S} F
\end{equation*}
satisfies
$\norm{\begin{pmatrix} v \\\sigma\end{pmatrix}}_{\Lp[-\nu]{2}} \leq c \norm{v\restriction_{\Omega_0}}_{\Lp[-\nu]{2}}$.

\item 
Let $\Omega\subseteq \R^d$ be open, $\Omega_0\subseteq \Omega$, and let $\HH\coloneqq \Lp{2}(\Omega)^3\times \Lp{2}(\Omega)^3$, and $\HH_0\coloneqq \Lp{2}(\Omega_0)^3\times \Lp{2}(\Omega_0)^3$. Let $B\from \HH_0\to \HH$ be the embedding by extending functions on $\Omega_0$ to $\Omega$ by zero. Then the controlled Maxwell's Equations are given by
\begin{equation*}
\bigg(\partial_{t,\nu}\begin{pmatrix} \varepsilon&0\\ 0&\mu\end{pmatrix}+\begin{pmatrix} \sigma&0\\ 0&0\end{pmatrix}
+\begin{pmatrix} 0&-\curl \\ \curlz &0\end{pmatrix}\bigg)\begin{pmatrix} E\\ H\end{pmatrix}
=\begin{pmatrix} j_0\\ 0\end{pmatrix} + BG\text{.}
\end{equation*}
For $t\in \R$, the Maxwell's Equations are null-controllable in time $T$ if and only if there exists $c\geq 0$ such that for all $F\in \Lp[-\nu]{2}([T,\infty),\HH)$ and the solution $\begin{pmatrix} E\\H\end{pmatrix}$ of the $\nu$-adjoint system
\begin{equation*}
\left(-\partial_{t,-\nu}\begin{pmatrix} \varepsilon&0\\ 0&\mu\end{pmatrix}+\begin{pmatrix} \sigma^\ast&0\\ 0&0\end{pmatrix}
-\begin{pmatrix} 0&-\curl \\ \curlz &0\end{pmatrix}\right)\begin{pmatrix} E\\H\end{pmatrix} =  \1_{\geq T} F
\end{equation*}
satisfies
$\bignorm{\begin{pmatrix} E \\H\end{pmatrix}}_{\Lp[-\nu]{2}} \leq c \bignorm{\begin{pmatrix} E \\H\end{pmatrix}\restriction_{\Omega_0}}_{\Lp[-\nu]{2}}$. \qedhere
\end{itemize}
 \end{example}

\subsection{Pointwise Statements for Null-Controllability}
\label{subsec:InitialValues}

In this subsection, we specialise to a particular class of material laws that covers all the examples we treat in this paper. To this end, we consider the same assumptions as in
\Cref{sec:NullControlWithoutPwise}, and we additionally ask the material law to be
 of the form $M(z)\coloneqq M_0 + z^{-1} M_1$, where $M_0,M_1\in \Lb(\HH)$ and $\Real z >0$. In this particular situation, we can obtain a pointwise interpretation of solutions of evolutionary equations. Note that these material laws describe exactly those equations without memory terms \cite[Proposition 3.2.10]{Trostorff2018}. We start with a version of \cite[Theorem 9.4.3]{SeTrWa22} which includes inhomogeneities at the right-hand side.
We write 
\begin{align*}
    \sH^1_\nu(\R,\HH) & \coloneqq\dom(\partial_{t,\nu}),\\
    \sH^{-1}_\nu(\R,\HH) & \coloneqq \dom(\partial_{t,\nu})',
\end{align*}
as well as
\[\sH^{-1}(A) \coloneqq \dom(A)';\]
cf.\ \cite[Section 9.2]{SeTrWa22}.

The next theorem is a slight generalisation of \cite[Theorem 9.4.3]{SeTrWa22}, where only $F=0$ was treated.
\begin{theorem}[{compare \cite[Theorem 9.4.3]{SeTrWa22}}]
\label{thm:Solution_InitialValues}
    Let $U_0\in \dom(A)\subseteq\HH$, $U\in \Lp[\nu]{2}(\R,\HH)$, $F\in \Lp[\nu]{2}([0,\infty),\HH)$.
    Then, the following are equivalent:
    \begin{enumerate}[label=\normalfont{(\roman*)}]
     \item
        $M_0U - \1_{[0,\infty)}M_0U_0\from \R\to \sH^{-1}(A)$ is continuous, $\spt U\subseteq [0,\infty)$ and
        \begin{align*}
            \partial_{t,\nu} M_0U + M_1 U + AU & = F \quad\text{on}\; (0,\infty),\\
            M_0 U(0+) & = M_0U_0  \quad\text{in}\; \sH^{-1}(A).
        \end{align*}
     \item
        $U-\1_{[0,\infty)}U_0 \in \dom\bigl(\overline{\partial_{t,\nu}M_0+M_1+A}\bigr)$ and \[\bigl(\overline{\partial_{t,\nu}M_0+M_1+A}\bigr)(U-\1_{[0,\infty)}U_0) = F - \1_{[0,\infty)}(M_1+A)U_0\in\Lp[\nu]{2}([0,\infty),\HH)\text{.} \]
     \item
        $U = S_\nu(F + \delta_0M_0U_0)$, where $\delta_0$ is the Dirac distribution centered at $0$ and we extended $S_\nu\in \Lb(\sH_{\nu}^{-1}(\R,\HH))$.
    \end{enumerate}
    In either case, $M_0U - \1_{[0,\infty)}M_0U_0 \in \sH_\nu^1(\R,\sH^{-1}(A))$.
\end{theorem}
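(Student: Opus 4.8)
The plan is to use that for $M(z)=M_0+z^{-1}M_1$ one has $\partial_{t,\nu}M(\partial_{t,\nu})=\partial_{t,\nu}M_0+M_1$, so that with $L\coloneqq\overline{\partial_{t,\nu}M_0+M_1+A}$ we have $S_\nu=L^{-1}$. I would work in two extrapolation scales: the spatial one $\sH^{-1}(A)=\dom(A)'$, on which skew-selfadjointness of $A$ yields a bounded $A\colon\HH\to\sH^{-1}(A)$ and hence a bounded lift $\Lp[\nu]{2}(\R,\HH)\to\Lp[\nu]{2}(\R,\sH^{-1}(A))$, and the temporal one $\sH^{-1}_\nu(\R,\HH)=\dom(\partial_{t,\nu})'$, on which $\partial_{t,\nu}$, and therefore (because $S_\nu\partial_{t,\nu}\subseteq\partial_{t,\nu}S_\nu$) also $S_\nu$, extend boundedly. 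The structural identity to record first is $\delta_0=\partial_{t,\nu}\1_{[0,\infty)}$, legitimate since $\nu>0$ gives $\1_{[0,\infty)}\in\Lp[\nu]{2}(\R)$; thus $\delta_0 M_0U_0=\partial_{t,\nu}(\1_{[0,\infty)}M_0U_0)\in\sH^{-1}_\nu(\R,\HH)$, making the argument of $S_\nu$ in (iii) meaningful. Note also that $U_0\in\dom(A)$ makes $(M_1+A)U_0\in\HH$, so $\1_{[0,\infty)}(M_1+A)U_0\in\Lp[\nu]{2}(\R,\HH)$ is an honest right-hand side.

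I would establish (ii)$\Leftrightarrow$(iii) first, as it is purely algebraic in $\sH^{-1}_\nu(\R,\HH)$. Writing $V\coloneqq U-\1_{[0,\infty)}U_0$ and applying the extended $L$ to $\1_{[0,\infty)}U_0$ gives $L(\1_{[0,\infty)}U_0)=\delta_0 M_0U_0+\1_{[0,\infty)}(M_1+A)U_0$, the first summand being the jump via $\delta_0=\partial_{t,\nu}\1_{[0,\infty)}$ and the second because $M_1$ and $A$ act spatially and commute with multiplication by $\1_{[0,\infty)}$. Consequently $LU=LV+L(\1_{[0,\infty)}U_0)$, so the identity $LV=F-\1_{[0,\infty)}(M_1+A)U_0$ of (ii) is equivalent to $LU=F+\delta_0 M_0U_0$, i.e.\ $U=S_\nu(F+\delta_0 M_0U_0)$, which is (iii).

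The analytic content is (i)$\Leftrightarrow$(ii); since $F$ enters additively, the argument runs parallel to the case $F=0$ of \cite[Theorem~9.4.3]{SeTrWa22}. Starting from (ii), set $\tilde F\coloneqq F-\1_{[0,\infty)}(M_1+A)U_0\in\Lp[\nu]{2}([0,\infty),\HH)$ and decompose $LV$ termwise as $\partial_{t,\nu}(M_0V)+M_1V+AV$ in $\Lp[\nu]{2}(\R,\sH^{-1}(A))$: approximating $V$ by $V_n\in\dom(\partial_{t,\nu})\cap\dom(A)$ with $LV_n\to\tilde F$, the terms $M_1V_n\to M_1V$ in $\Lp[\nu]{2}(\R,\HH)$ and $AV_n\to AV$ in $\Lp[\nu]{2}(\R,\sH^{-1}(A))$ converge, so by closedness of $\partial_{t,\nu}$ on $\Lp[\nu]{2}(\R,\sH^{-1}(A))$ the remainder $\partial_{t,\nu}(M_0V_n)$ converges there too. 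This yields $\partial_{t,\nu}(M_0V)=\tilde F-M_1V-AV\in\Lp[\nu]{2}(\R,\sH^{-1}(A))$, hence the final assertion $M_0V=M_0U-\1_{[0,\infty)}M_0U_0\in\sH^1_\nu(\R,\sH^{-1}(A))$. The Sobolev embedding $\sH^1_\nu(\R,\sH^{-1}(A))\hookrightarrow\mathrm{C}(\R,\sH^{-1}(A))$ provides a continuous representative; since causality of $S_\nu$ forces $\spt V\subseteq[0,\infty)$ (so $M_0V$ vanishes on $(-\infty,0)$), continuity gives value $0$ at $t=0$ and thus $M_0U(0+)=M_0U_0$ in $\sH^{-1}(A)$, while restricting the equation to $(0,\infty)$ recovers the pointwise PDE; this is (i). For (i)$\Rightarrow$(ii) I would reverse the computation: compute the distributional derivative of $M_0U$ on all of $\R$ as its derivative on $(0,\infty)$ plus the jump $\delta_0 M_0U_0$ (read off from the prescribed trace and from $U=0$ on $(-\infty,0)$), obtain the extrapolated equation $\partial_{t,\nu}(M_0V)+M_1V+AV=\tilde F$ with $\tilde F\in\Lp[\nu]{2}(\R,\HH)$, and invoke the extended solution operator to conclude $V=S_\nu\tilde F\in\dom(L)$ with $LV=\tilde F$.

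The main obstacle I anticipate is the bookkeeping across the two distinct extrapolation scales: $\partial_{t,\nu}(M_0V)$ lives a priori in the temporal space $\sH^{-1}_\nu(\R,\HH)$ whereas $AV$ lives in the spatial space $\Lp[\nu]{2}(\R,\sH^{-1}(A))$, and the crux is to certify that both, together with $LV$, can be read in the common space $\Lp[\nu]{2}(\R,\sH^{-1}(A))$ and that the termwise splitting of $L$ persists in the limit — which is exactly where closedness of $\partial_{t,\nu}$ on the $\sH^{-1}(A)$-valued scale does the work. The second delicate point is identifying the distributional jump of $M_0U$ with the pointwise value $M_0U_0$ in $\sH^{-1}(A)$; this trace computation, through the embedding into continuous functions, is precisely where the hypotheses $U_0\in\dom(A)$ and the continuity assumption of (i) are consumed.
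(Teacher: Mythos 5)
Your proposal is correct and follows essentially the same route as the paper, which simply defers to the proof of \cite[Theorem 9.4.3]{SeTrWa22} with the remark that one only has to carry the additional term $F$ through the argument. Your write-up fills in exactly the steps that reference uses --- the identity $\delta_0=\partial_{t,\nu}\1_{[0,\infty)}$ for the equivalence of (ii) and (iii), and the two extrapolation scales $\sH^{-1}(A)$ and $\sH^{-1}_\nu$ together with closedness of $\partial_{t,\nu}$ on $\Lp[\nu]{2}(\R,\sH^{-1}(A))$ and the Sobolev embedding for the equivalence of (i) and (ii) --- so it is a faithful, more detailed rendering of the intended proof.
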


\begin{proof}
    The proof is analogous to the one in \cite[Theorem 9.4.3]{SeTrWa22}; one only has to take into account the additional term $F$ on the right-hand side.
\end{proof}

Let us now turn to null-controllability for evolution equations with material laws $M$ of the form $M(z)\coloneqq M_0 + z^{-1} M_1$.

\begin{definition}
    We say that
    \begin{equation}\label{eq:PWiseNullControlSystem}
    \begin{aligned}
            \partial_{t,\nu} M_0U + M_1 U + AU & = BG \quad\text{on}\; (0,\infty),\\
            M_0 U(0+) & = M_0U_0  \quad\text{in}\; \sH^{-1}(A),
    \end{aligned}
    \end{equation}
    is \emph{pointwise null-controllable} in time $T>0$ if for all 
    $U_0\in \dom(A)$
    there exists $G\in \Lp[\nu]{2}(\R,\HH_0)$ with $\spt G\subseteq [0,\infty)$ such that    
    the solution $U\in \Lp[\nu]{2}(\R,\HH)$ given by
    \[U = S_\nu\Bigl(BG-\1_{[0,\infty)}(M_1+A)U_0\Bigr) + \1_{[0,\infty)}U_0\]
    satisfies $H^{-1}(A)\ni M_0U(T)=0$.
\end{definition}
Note that in view of Theorem \ref{thm:Solution_InitialValues} we have $M_0U -\1_{[0,\infty)}M_0U_0 \in H^1_\nu(\R,H^{-1}(A))$ so we can actually perform the point evaluation at $T$.

For $U_0\in\dom(A)$ and $G\in \Lp[\nu]{2}(\R,\HH_0)$ with $\spt G\subseteq [0,\infty)$ let $U^G\in \Lp[\nu]{2}(\R,\HH)$ be given by
\[U^G \coloneqq S_\nu\Bigl(BG-\1_{[0,\infty)}(M_1+A)U_0\Bigr) + \1_{[0,\infty)}U_0.\]
Moreover, let $U\in \Lp[\nu]{2}(\R,\HH)$ be given by
\[U \coloneqq S_\nu\Bigl(-\1_{[0,\infty)}(M_1+A)U_0\Bigr) + \1_{[0,\infty)}U_0.\]
Then $M_0U^G -\1_{[0,\infty)}M_0U_0, M_0U -\1_{[0,\infty)}M_0U_0 \in \sH^1_\nu(\R,\sH^{-1}(A))$ and therefore also their difference
\[M_0S_\nu(BG) = M_0(U^G-U) \in \sH^1_\nu(\R,\sH^{-1}(A)).\]


\begin{theorem}
\label{thm:control_equivalence_InitialValues}
    Let $T>0$. The following are equivalent:
    \begin{enumerate}[label=\normalfont{(\roman*)}]
     \item \labelcref{eq:PWiseNullControlSystem} is pointwise null-controllability.
     \item $\ran\Bigl(M_0 S_\nu\bigl(\1_{[0,\infty)}(M_1+A)(\cdot)\bigr)(T) - M_0(\cdot)\Bigr) \subseteq \ran\Bigl(M_0 S_\nu B\1_{\geq T,\nu} (\cdot)(T)\Bigr)$.
    \end{enumerate} 
\end{theorem}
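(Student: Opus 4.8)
The plan is to exploit the affine dependence of the state on the control and to reduce pointwise null-controllability to a single range inclusion, exactly in the spirit of the proof of \Cref{thm:control_equivalence}, but now reading off the final value $M_0U(T)$ through the pointwise trace supplied by \Cref{thm:Solution_InitialValues}. First I would fix $U_0\in\dom(A)$ and, writing $U^G$ and $U$ for the controlled and uncontrolled states as defined before the theorem, record the superposition identity $M_0U^G = M_0U + M_0S_\nu(BG)$. Since $M_0U^G-\1_{[0,\infty)}M_0U_0$, $M_0U-\1_{[0,\infty)}M_0U_0$ and $M_0S_\nu(BG)$ all lie in $\sH^1_\nu(\R,\sH^{-1}(A))$ by the regularity recorded just before the theorem (via \Cref{thm:Solution_InitialValues}), the trace at $T$ is well defined in $\sH^{-1}(A)$, and the target condition $M_0U^G(T)=0$ is equivalent to $M_0S_\nu(BG)(T) = -M_0U(T)$.

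The next step is to identify $-M_0U(T)$ with the left-hand operator $L$ of (ii). Because $M_0$ acts as a time-independent multiplication, $M_0\1_{[0,\infty)}U_0 = \1_{[0,\infty)}M_0U_0$, so the defining formula for $U$ together with linearity of $S_\nu$ gives $M_0U - \1_{[0,\infty)}M_0U_0 = -M_0S_\nu\bigl(\1_{[0,\infty)}(M_1+A)U_0\bigr)\in\sH^1_\nu(\R,\sH^{-1}(A))$. Evaluating at $T>0$, where $\1_{[0,\infty)}\equiv 1$ in a neighbourhood of $T$, yields $-M_0U(T) = M_0S_\nu\bigl(\1_{[0,\infty)}(M_1+A)U_0\bigr)(T) - M_0U_0 = LU_0$, which is precisely the left-hand operator of (ii) applied to $U_0$.

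Consequently, \labelcref{eq:PWiseNullControlSystem} is pointwise null-controllable if and only if, for every $U_0\in\dom(A)$, the element $LU_0$ lies in the set of attainable final values $\{\,M_0S_\nu(BG)(T) : G\in\Lp[\nu]{2}(\R,\HH_0),\ \spt G\subseteq[0,\infty)\,\}$, which is the range of the right-hand operator $G\mapsto M_0S_\nu(BG)(T)$ appearing in (ii). With both sides identified, the equivalence (i)$\Leftrightarrow$(ii) is the tautology $\ran(L)\subseteq\ran(\text{right-hand operator})$: for (i)$\Rightarrow$(ii) each $U_0$ supplies a control $G$ with $M_0S_\nu(BG)(T)=LU_0$, giving the inclusion; for (ii)$\Rightarrow$(i) one picks, for a given $U_0$, a preimage $G$ of $LU_0$ and uses it as the control.

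The \textbf{main obstacle} is not this final bookkeeping but the justification that the evaluation at $T$ is legitimate and that the attainable set is captured by the stated operator. This rests on \Cref{thm:Solution_InitialValues} to guarantee $M_0U^G-\1_{[0,\infty)}M_0U_0\in\sH^1_\nu(\R,\sH^{-1}(A))$, so that the pointwise trace $M_0U^G(T)$ is well defined in $\sH^{-1}(A)$, together with causality of $S_\nu$ from \Cref{thmPicard} to control how the constraint $\spt G\subseteq[0,\infty)$ enters the final-value map; in particular, by causality the attainable values at $T$ are robust under further temporal restriction of the control. Handling these domain and regularity issues carefully is the only delicate part, after which the equivalence is immediate.
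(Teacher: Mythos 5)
Your proposal is correct and follows essentially the same route as the paper: decompose the controlled state as the uncontrolled state plus $S_\nu(BG)$, use the regularity $M_0S_\nu(BG)\in\sH^1_\nu(\R,\sH^{-1}(A))$ established before the theorem to evaluate at $T$, identify the left-hand operator of (ii) with $-M_0U(T)$, and read off the equivalence as a range inclusion. The paper simply writes out the two implications explicitly rather than phrasing it as a tautology about $\ran(L)$, but the content is identical.
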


\begin{proof}
    (i)$\Rightarrow$(ii): 
    Let $h\in \ran\Bigl(M_0 S_\nu\bigl(\1_{[0,\infty)}(M_1+A)(\cdot)\bigr)(T) - M_0(\cdot)\Bigr)$. Then, there exists $U_0 \in \dom(A)$ such that $h =
    M_0 S_\nu\bigl(\1_{[0,\infty)}(M_1+A)U_0\bigr)(T) - M_0U_0$. By (i), there exists $G\in \Lp[\nu]{2}(\R,\HH_0)$ with $\spt G\subseteq [0,\infty)$ such that $M_0S_\nu\bigl(BG-\1_{[0,\infty)}(M_1+A)U_0\bigr)(T) + M_0U_0 = 0$. Thus,
    \[h = M_0S_\nu \bigl(BG\bigr)(T) \in \ran\Bigl(M_0S_\nu \bigl(B\1_{\geq T,\nu} (\cdot)\bigr)(T)\Bigr).\]

    (ii)$\Rightarrow$(i):
    Let $U_0\in\dom(A)$. Then by (ii) there exists some $G\in \Lp[\nu]{2}(\R,\HH_0)$ with $\spt G\subseteq [0,\infty)$ such that $M_0 S_\nu\bigl(\1_{[0,\infty)}(M_1+A)U_0\bigr)(T) - M_0U_0 = M_0S_\nu \bigl(BG\bigr)(T)$.
    
    Let $U\coloneqq S_\nu\Bigl(BG-\1_{[0,\infty)}(M_1+A)U_0\Bigr) + \1_{[0,\infty)}U_0 \in \Lp[\nu]{2}(\R,\HH)$.
    Then, $U-\1_{[0,\infty)}U_0\from \R\to H^{-1}(A)$ is continuous, and thus
    \[M_0U(T) = M_0 S_\nu\Bigl(BG-\1_{[0,\infty)}(M_1+A)U_0\Bigr)(T) + M_0U_0 = 0. \qedhere\]
%
\end{proof}

 In view of Theorem \ref{thm:control_equivalence_InitialValues}, we are missing is a corresponding observability inequality for the $\nu$-adjoint operators. The key difficulty here is to cope with the point evaluations with values in $\sH^{-1}(A)$ and their representations in $\Lp[\nu]{2}(\R,\sH^{-1}(A))$. We thus formulate this as an open problem.
 
\begin{openproblem}
   Find a suitable observability inequality for the $\nu$-adjoint operators in the context of Theorem \ref{thm:control_equivalence_InitialValues}.
\end{openproblem} 



\end{document}